\documentclass[12pt]{amsart}
\usepackage{amssymb,amsmath,amsthm,fullpage}
\usepackage{pdfsync,color}
\usepackage{enumerate}
\usepackage{pdfsync}
\usepackage{mathrsfs}
\usepackage[bookmarksnumbered,colorlinks,plainpages,backref]{hyperref}
\usepackage[normalem]{ulem}



\numberwithin{equation}{section}

\newcommand{\C}{\mathbb C}
\DeclareMathOperator{\supp}{supp}
\DeclareMathOperator{\Imm}{Im}

\DeclareMathOperator{\Rre}{Re}
\DeclareMathOperator{\Dom}{Dom}

\newcommand{\p}{\partial}

\newcommand{\z}{\bar z}

\newcommand{\dbar}{\bar\partial}
\newcommand{\dbars}{\bar\partial^*}

\newcommand{\dbarb}{\bar\partial_b}
\newcommand{\dbarbs}{\bar\partial^*_b}

\newcommand{\Boxb}{\Box_b}
\newcommand{\vp}{\varphi}

\newcommand{\atopp}[2]{\genfrac{}{}{0pt}{2}{#1}{#2}}
\newcommand{\nn}{\nonumber}

\newcommand{\I}{\mathcal{I}}

\newcommand{\la}{\langle}
\newcommand{\ra}{\rangle}

\newcommand{\LL}{\bar L}

\DeclareMathOperator{\opL}{\mathcal{L}}
\newcommand{\om}{\omega}
\newcommand{\omb}{\bar\om}

\newtheorem{thm}{Theorem}[section]
\newtheorem{prop}[thm]{Proposition}
\newtheorem{proposition}[thm]{Proposition}

\newtheorem{lemma}[thm]{Lemma}

\newtheorem{corollary}[thm]{Corollary}

\newtheorem*{theorem*}{Theorem}

\theoremstyle{definition}
\newtheorem{defn}[thm]{Definition}
\newtheorem{definition}[thm]{Definition}

\theoremstyle{remark}

\newtheorem{remark}[thm]{Remark}

\newcommand{\Om}{\Omega}

\newcommand{\pbb}{\bar{\partial}_b}
\newcommand{\pbba}{\bar{\partial}_b^{*}}
\newcommand{\pbbpm}{\bar{\partial}_{b,{t}}^{*}}
\newcommand{\dbarbsvp}{\dbar_{b,{t}}^*}

\newcommand{\lla}[1]{\left\{ #1\right\} }

\newcommand{\bkj}{b^{\bar{k}j}}
\newcommand{\bjk}{b^{\bar{j}k}}

\newcommand{\DQ}{\Dom(\pbb)\cap \Dom(\pbba)}
\newcommand{\opH}{\mathcal{H}}
\newcommand{\opC}{\mathcal{C}}

\newcommand{\bna}{\overline{\nabla}}

\newcommand{\pare}[1]{\left( #1\right)}
\newcommand{\corc}[1]{\left[ #1\right]}
\newcommand{\ip}[1]{ \left\langle #1 \right\rangle }
\newcommand{\intprod}[2]{ \left\langle #1 , #2 \right\rangle }
\newcommand{\ba}[1]{ \overline{#1}}

\newcommand{\gqt}{G_{q,{t}}}
\newcommand{\gqtd}{G_{q,{t}}^{\delta}}
\newcommand{\gqtdn}{G_{q,{t}}^{\delta,\nu}}
\newcommand{\gqtzn}{G_{q,{t}}^{0,\nu}}
\newcommand{\vab}[1]{\left| #1 \right| }
\newcommand{\normm}[1]{ \left\| \hspace{-1pt}\vab{ #1 }\hspace{-1pt} \right\|  }

\newcommand{\norm}[1]{{\| {#1} \|}}
\newcommand{\Norm}[1]{\big\|\hspace{-1.15pt}\big| #1 \big\|\hspace{-1.2pt}\big|}

\newcommand{\vt}{\vartheta}
\newcommand{\ve}{\varepsilon}

\newcommand{\lb}{\bar{L}}
\newcommand{\lba}{\bar{L}^{*,\phi}}
\newcommand{\pb}{\dbar}
\DeclareMathOperator{\diag}{diag}

\begin{document}

\title{Closed range estimates for $\bar\partial_b$ on CR manifolds of hypersurface type}%

\author{Joel Coacalle and Andrew Raich}%
\address{Universidade Federal de S\~ao Carlos, Departamento de Matem\'atica, Rodovia Washington Luis, Km 235 - Caixa Postal 676}
\address{SCEN 327, 1 University of Arkansas, Fayetteville, AR 72701}%
\email{joelportada@dm.ufscar.br, araich@uark.edu}%

\thanks{This work was completed while the first author visited the University of Arkansas and also while the second author visited
the Universidade Federal de S\~ao Carlos. 
The authors wish to express their deep gratitude to both of these institutions.}%
\thanks{Work supported in part by CAPES (88881.135461/2016-01) and FAPESP (grant number 2018/02663-0).}
\subjclass[2010]{Primary 32W10, Secondary 32F17, 32V20, 35A27, 35N15}
\keywords{weak $Z(q)$, weak $Y(q)$, tangential Cauchy-Riemann operator, $\bar\partial_b$, closed range, microlocal analysis}

\begin{abstract}The purpose of this paper is to establish sufficient conditions for closed range estimates 
on $(0,q)$-forms, for some \emph{fixed} $q$, $1 \leq q \leq n-1$, for $\dbar_b$ in both $L^2$ and 
$L^2$-Sobolev spaces in embedded, not necessarily pseudoconvex CR manifolds of hypersurface type. The condition, named
weak $Y(q)$, is both more general than previously established sufficient conditions 
and easier to check. Applications
of our estimates include estimates for the Szeg\"o projection as well as an argument that the harmonic forms have the same regularity as the
complex Green operator. We use a microlocal argument and carefully construct a norm that is well-suited for a microlocal decomposition of
form. We do not require that the CR manifold is the boundary of a domain. Finally, we provide an example that demonstrates that
weak $Y(q)$ is an easier condition to verify than earlier, less general conditions.
\end{abstract}

\maketitle

%
%
\section{Introduction}

In this paper, we {show that the tangential Cauchy-Riemann operator has closed range on $(0,q)$-forms, for a \emph{fixed} $q$,
$1 \leq q \leq n-1$, in $L^2$ and $L^2$-Sobolev spaces
on a general class of embedded CR manifolds of hypersurface type that}
satisfy a general geometric condition called \emph{weak $Y(q)$}.  We work on a 
smooth CR submanifold $M \subset \C^n$ that may be neither pseudoconvex nor the boundary of a domain. The weak $Y(q)$ condition,
first written down by Harrington and Raich \cite{HaRa15} 
{and applied to} boundaries of domains in Stein manifolds, is the most general known condition that
{ensures} closed range of the tangential Cauchy-Riemann operator on $(0,q)$-forms. We also provide an example that shows that the generality
provided by the definition makes it easier to verify than {previous and more restrictive} conditions.
Additionally, we show that for any Sobolev level, there is a weight such that the {(weighted)} complex
Green operator {(inverse to the weighted Kohn Laplacian)} is continuous and the harmonic forms in this weighted space are elements of the 
prescribed Sobolev space.

This paper generalizes both \cite{HaRa11} and \cite{HaRa15} in the following ways. We do not require our CR manifold to be the boundary of
a domain. In effect, we translate the $\dbar$-techniques of \cite{HaRa15} to the microlocal setting. In \cite{HaRa11}, they
prove results akin to our main results, but the ``weak $Y(q)$" condition they define is more restrictive than the
weak $Y(q)$ condition here. Additionally, we use a reengineered elliptic regularization argument to show that (weighted) harmonic $(0,q)$-forms
are smooth, a fact not mentioned in \cite{HaRa11,HaRa15}. Additionally, we are careful to monitor the regularized operators and the fact
that they preserve orthogonality with the space of (weighted) harmonic forms, a fact that has not been observed before (in part because we
prove smoothness of harmonic forms early in regularization process).

Throughout this paper, we will consider $M \subset\C^N$ 
being a $2n-1$ real dimension, $C^\infty$, compact, orientable CR-manifold, $N \geq n$ of hypersurface type. This last condition means
that the CR dimension of $M$ is $n-1$ so that the complex tangent bundle splits into a complex subbundle of dimension $n-1$,
the conjugate subbundle, and one totally real direction.  An appropriate restriction of 
the $\dbar$-complex to $M$ yields the $\dbarb$-complex.

The $\dbarb$-operator was introduced by Kohn and Rossi \cite{KoRo65} to study the boundary values of holomorphic functions on domains in 
$\C^n$, and it was soon realized that the $\dbarb$-complex was deeply intertwined with
the geometry and potential theory of such domains and their boundaries. The story of the $L^2$-theory of the $\dbarb$-operator
begins with Shaw \cite{Sha85} and Boas and Shaw \cite{BoSh86} (in the top degree) on boundaries of pseudoconvex domains in $\C^n$ and
with Kohn \cite{Koh86} on the boundaries of pseudoconvex domains in Stein manifolds. Nicoara \cite{Nic06} established closed range
for $\dbarb$ (at all form levels) on smooth, embedded, compact, orientable CR manifolds of hypersurface dimension in the case that $n\geq {3}$ and
Baracco \cite{Bar12} established the $n={2}$ case. Thus, from the point of view closed range, the pseudoconvex case is completely understood.

Harrington and Raich \cite{HaRa11} began an investigation of the $\dbarb$-problem on non-pseudoconvex CR manifolds of hypersurface type. Specifically,
they fixed a level $q$, $1 \leq q \leq n-2$, and sought a general condition that sufficed to prove closed range of $\dbarb$ on $(0,q)$-forms
(and in $L^2$-Sobolev spaces in suitably weighted spaces). They worked on CR manifolds of hypersurface type, 
{and our results generalize theirs by showing that the conclusions they draw are still true with a weaker hypothesis, namely, 
the weak $Y(q)$ condition from \cite{HaRa15}}. The analysis in \cite{HaRa15} is loosely based
on the ideas of Shaw and do{es} not use a microlocal argument, but {rather} $\dbar$-methods. This requires the CR manifold to be the boundary of a domain,
a {hypothesis that we relax}.
The name weak $Y(q)$ stems from the fact that {it} is a weakening of the classical 
$Y(q)$ condition, a geometric condition that is equivalent to the complex Green operator satisfying $1/2$-estimates on $(0,q)$-forms. The \emph{complex Green
operator}, when it exists, is the name for the (relative) inverse to $\Boxb$ in $L^2_{0,q}(M)$ and denoted by $G_q$. 

Our methods involve a microlocal argument in the spirit of \cite{Nic06,Rai10,HaRa11} and a recently reengineered elliptic regularization that not only
allows for a weighted complex Green operator to solve the $\dbarb$-problem in a given $L^2$-Sobolev space, but also shows that the 
weighted $L^2$-harmonic forms reside in that Sobolev space \cite{KhRa20,HaRa20SCRE}. 
This last fact is not clear from the elliptic regularization methods used in
\cite{Nic06,HaRa11}. For a discussion of the weak $Y(q)$ condition and its related, non-symmetrized version, weak $Z(q)$, please see
\cite{HaRa11,HaRa15,HaPeRa15, HaRa18,HaRa19} and for discussion on the elliptic regularization method, \cite{HaRa20SCRE,KhRa20}.

The outline of the argument is as follows: we start by proving a basic identity that is well suited to the geometry of $M$. 
The problem
with basic identities for $\dbarb$ is that the Levi form appears with in a term that also contains the derivative in the totally real direction. 
The 
microlocal argument is used to control this term -- specifically, we construct a norm based on a microlocal decomposition of our form which
allows us to use a version of the sharp G{\aa}rding's inequality and eliminate the $T$ from the inner product term. This allows us to prove a
basic estimate (Proposition \ref{prop:mainestimate})
from the basic identity and the main results are due to careful applications of the basic estimate.

The outline of the paper is the following. We conclude this section with statements of our main theorems.  
In Section \ref{sec:defs}, we define our notation. In Section \ref{sec:pseudo}, we give some computations in local coordinates
and the microlocal decomposition. 
In Section \ref{sec: basic estimate}, we prove the basic
estimate, Proposition \ref{prop:mainestimate}. 
In Section \ref{sec:main theorem, weighted}, we prove the Theorem \ref{thm:mainthm, Sobolev}. Many of the consequences
of Theorem \ref{thm:mainthm, Sobolev} use identical proofs to \cite[Theorem 1.2]{HaRa11}, once
{we have completed the elliptic regularization argument, established}
the continuity of $G_{q,{t}}$ on $H^s_{0,q}(M)$, and {proved the regularity} of the weighted harmonic forms.
In Section \ref{sec:proof of main theorem}, we outline how to pass from Theorem \ref{thm:mainthm, Sobolev} to Theorem \ref{thm:mainthm, unweighted}.
We conclude the paper in Section \ref{sec:examples} with an example.

\begin{thm}\label{thm:mainthm, unweighted}
	Let $ M^{2n-1}$ be an embedded $C^\infty$, compact, orientable CR-manifold of hypersurface type that satisfies
	weak $Y(q)$ for some fixed $q$, $ 1\leq q\leq n-2 $. 
	Then the following hold:
	\begin{enumerate}
		\item The operators $\pbb:L^2_{0,q}(M)\rightarrow L^2_{0,q+1}(M) $ and $ \pbb: L^2_{0,q-1}(M)\rightarrow L^2_{0,q}(M)$ have closed range;
		\item The operators $ \pbba:L^2_{0,q+1}(M)\rightarrow L^2_{0,q}(M) $ and $ \pbba: L^2_{0,q}(M)\rightarrow L^2_{0,q-1}(M)$ have closed range;
		\item The Kohn Laplacian $ \Box_b:=\pbb\pbba+\pbba\pbb $ has closed range on $ L^2_{0,q}(M) $;
		\item The complex Green operator $G_q $ exists and is continuous on $L^2_{0,q}(M) $;
		\item The canonical solution operators, $\pbba G_{q}:L^2_{0,q}(M)\rightarrow  L^2_{0,q-1}(M)$ 
		and $ G_{q}\pbba:L^2_{0,q+1}(M)\rightarrow  L^2_{0,q}(M)$ are continuous;
		\item The canonical solution operators, $ \pbb G_{q}:L^2_{0,q}(M)\rightarrow  L^2_{0,q+1}(M)$ 
		$  G_{q}\pbb:L^2_{0,q-1}(M)\rightarrow  L^2_{0,q}(M)$ are continuous;
		\item The space of the harmonic forms $\opH_{0,q}(M) $, defined to be the (0,q)-forms annihilated by $ \pbb $ and $ \pbba $, is finite dimensional;
		\item If $\tilde q = q$ or $q+1$ and $\alpha \in L^2_{0,\tilde q}$, then there exists $u \in L^2_{0,\tilde q-1}$ so that 
		\[
		\dbarb u = \alpha
		\]
		and $\|u\|_{{0}} \leq C \|\alpha\|_{{0}}$ for some constant $C$ independent of $\alpha$;
		\item The Szeg\"o projections $S_q=I-\pbba\pbb G_q$  and $S_{q-1} = I - \dbarbs G_q \dbarb$ are continuous on $L^2_{0,q}(M)$.
	\end{enumerate}
\end{thm}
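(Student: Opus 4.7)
The plan is to deduce Theorem \ref{thm:mainthm, unweighted} from Theorem \ref{thm:mainthm, Sobolev} specialized to Sobolev level $s=0$. Since the weight $e^{-t\vp}$ is smooth and uniformly bounded above and below on the compact manifold $M$, the weighted and unweighted $L^2$ inner products are equivalent, so the closed range property (being topological) transfers immediately from the weighted to the unweighted setting. This gives (1), and (2) follows by the standard duality principle: a densely defined closed operator has closed range if and only if its Hilbert space adjoint does. Item (3) is then obtained from (1) and (2) via the Hodge-style decomposition $L^2_{0,q}(M) = \opH_{0,q}(M) \oplus \Ran \dbarb \oplus \Ran \dbarbs$, which forces $\Box_b$ to be bounded below on $\opH_{0,q}(M)^\perp$.

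With (1)--(3) established, the complex Green operator $G_q$ in (4) is constructed as $\Box_b^{-1}$ on $\opH_{0,q}(M)^\perp$, extended by zero on $\opH_{0,q}(M)$; its continuity follows from the open mapping theorem applied to $\Box_b$ acting from $\Dom(\Box_b) \cap \opH_{0,q}(M)^\perp$ to $\opH_{0,q}(M)^\perp$. The canonical solution operators in (5) and (6) are then continuous as products of $\dbarb$ or $\dbarbs$ with $G_q$; one uses the identity $\Box_b G_q = I - H$, where $H$ denotes the harmonic projection, together with the basic estimate to bound $\dbarb G_q \dbarbs$ and $\dbarbs G_q \dbarb$ individually. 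The finite dimensionality in (7) is inherited from the weighted harmonic space, whose finite dimensionality is part of Theorem \ref{thm:mainthm, Sobolev}, together with the observation that multiplication by the smooth positive weight is an isomorphism on the finite-dimensional space of smooth harmonic forms. Item (8) is the bounded solvability of $\dbarb u = \alpha$ via $u = \dbarbs G_{\tilde q} \alpha$ after projecting away the harmonic component of $\alpha$; the norm bound comes from continuity of $G_{\tilde q}$ on $L^2$. Item (9) is immediate because $S_q = I - \dbarbs \dbarb G_q$ and $S_{q-1} = I - \dbarbs G_q \dbarb$ are compositions of continuous operators.

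The main obstacle is the first step: transferring closed range cleanly between weighted and unweighted settings while aligning the unweighted harmonic space with its weighted counterpart. The paper handles this through the reengineered elliptic regularization emphasized in the introduction, in which orthogonality to the harmonic space is preserved at every stage of the regularization and weighted harmonic forms are shown to be smooth. Once this reduction is in place, the remainder of the argument parallels \cite[Theorem 1.2]{HaRa11} and is largely a matter of routine functional analysis on Hilbert spaces.
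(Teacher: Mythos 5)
Your overall route is the one the paper takes: specialize Theorem \ref{thm:mainthm, Sobolev} to $s=0$, transfer closed range of $\dbarb$ through the equivalence of $\normm{\cdot}_t$ with $\|\cdot\|_0$, obtain closed range of the adjoints from H\"ormander's duality theorem \cite[Theorem 1.1.1]{Hor65}, and finish items (3)--(9) by the standard Hodge-theoretic arguments of \cite[Theorem 1.2]{HaRa11}. In that sense the proposal is on target.

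There is, however, one genuine flaw, in your treatment of item (7). The inner product $(\cdot,\cdot)_t$ is \emph{not} a pointwise weighted inner product: it is built microlocally from the operators $\Psi^{+}_{\mu,t},\Psi^{0}_{\mu,t},\Psi^{-}_{\mu,t}$ together with the two weights $\phi^{\pm}$, so there is no single smooth positive multiplier relating $(\cdot,\cdot)_t$ to $(\cdot,\cdot)_0$; the norm equivalence is Nicoara's estimate \eqref{eq:normequiv}, not a boundedness-of-the-weight observation. More importantly, even in a genuinely weighted situation, multiplication by the weight does not map $\opH^q_t$ onto $\opH_{0,q}(M)$: if $\dbarb f=0$ and $\dbarb^{*,\phi}f=e^{\phi}\dbarbs\big(e^{-\phi}f\big)=0$, then $g=e^{-\phi}f$ satisfies $\dbarbs g=0$ but $\dbarb g=-e^{-\phi}\,\dbarb\phi\wedge f$, which is generally nonzero, so the asserted isomorphism fails. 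The correct argument, and the one the paper uses, is that once $\dbarb$ has closed range at levels $q$ and $q+1$ (which holds in either of the two equivalent norms), each harmonic space is isomorphic to the $L^2$-cohomology $\big(\ker\dbarb\cap L^2_{0,q}\big)/\dbarb\big(L^2_{0,q-1}\big)$, a quotient that does not depend on which equivalent inner product is used; hence $\dim\opH_{0,q}(M)=\dim\opH^q_t<\infty$ (see \cite[p.772]{RaSt08} or \cite{Koh73}). A minor additional point: in item (8), ``projecting away the harmonic component of $\alpha$'' changes the equation being solved; the statement should be read for $\alpha$ in the (closed) range of $\dbarb$, where $u=\dbarbs G_{\tilde q}\alpha$ solves $\dbarb u=\alpha$ and the bound follows from the $L^2$-continuity of $G_{\tilde q}$.
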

In fact, Theorem \ref{thm:mainthm, unweighted} follows immediately from Theorem \ref{thm:mainthm, Sobolev} using standard techniques
and the fact that the constructed norm $\normm{\cdot}_{{t}}$ is equivalent to the unweighted norm $\|\cdot\|_{{0}}$. 
{We denote the $L^2$ space with respect to $\normm{\cdot}_t$ by $L^2(M,\normm{\cdot}_t)$. Additionally, we
use the (equivalent) norm $\normm{\Lambda^s \cdot}_t$ on $H^s(M)$ because with it, we can obtain better constants and denote
the $H^s(M)$ with respect to this measurement by $H^s(M,\normm{\cdot}_t)$ .}
\begin{thm}\label{thm:mainthm, Sobolev}
Let $ M^{2n-1} $ be a $ C^\infty$  compact, orientable, weakly $ Y(q) $ CR-manifold of hypersurface type embedded in 
$\C^N$, $N \geq n$, and $ 1\leq q\leq n-2 $. For each $s\geq 0 $ there exists $ T_s\geq 0 $ so that the following hold:
\begin{enumerate}[i.]
	\item The operators {$\pbb:L^2_{0,q}(M,\normm{\cdot}_t)\rightarrow L^2_{0,q+1}(M,\normm{\cdot}_t)$ and 
	$ \pbb: L^2_{0,q-1}(M,\normm{\cdot}_t)\rightarrow L^2_{0,q}(M,\normm{\cdot}_t)$ have closed range}. 
	Additionally, for any $s>0$ if $ t\geq T_s $, then 
	$ \pbb:H^s_{0,q}{(M,\normm{\cdot}_t)}\rightarrow H^s_{0,q+1}{(M,\normm{\cdot}_t)} $ 
	and $ \pbb: H^s_{0,q-1}{(M,\normm{\cdot}_t)}\rightarrow H^s_{q}{(M,\normm{\cdot}_t)}$ have closed range.
	\item The operators $ \dbars_{b,{t}}:L^2_{0,q+1}{(M,\normm{\cdot}_t)}\rightarrow L^2_{0,q}{(M,\normm{\cdot}_t)} $ 
	and $ \dbars_{b,{t}}: L^2_{0,q}{(M,\normm{\cdot}_t)}\rightarrow L^2_{0,q-1}{(M,\normm{\cdot}_t)}$ 
have closed range; 
	Additionally, if $ t\geq T_s $, then $ {\pbbpm}:H^s_{0,q+1}{(M,\normm{\cdot}_t)}\rightarrow H^s_{{0,}q}{(M,\normm{\cdot}_t)} $ 
	and $ {\pbbpm}: H^s_{{0,}q}{(M,\normm{\cdot}_t)}\rightarrow H^s_{0,q-1}{(M,\normm{\cdot}_t)}$ have closed range.
	\item The Kohn Laplacian $ \Box_{b,t}:=\pbb\dbars_{b,{t}}+\dbars_{b,{t}}\pbb $ has closed range on $L^2_{0,q}{(M,\normm{\cdot}_t)} $, 
	and if $ t\geq T_s $, $\Box_{b,t}$ also has closed range on $H^s_{0,q}{(M,\normm{\cdot}_t)} $.
	\item The space of (weighted) harmonic forms $ \opH_{{t}}^q(M)$, defined to be the $(0,q)$-forms annihilated by 
	$ \pbb $ and $ \dbars_{b,{t}} $, is finite dimensional.
	\item The complex Green operator $ G_{q,{t}} $ exists and is continuous on $ L^2_{0,q}{(M,\normm{\cdot}_t)} $
	and also on $ H^s_{0,q}{(M,\normm{\cdot}_t)} $ if $ t\geq T_s $.
	\item The canonical solution operators for $ \pbb $, 
	$ \dbars_{b,{t}} G_{q,{t}}:L^2_{0,q}{(M,\normm{\cdot}_t)}\rightarrow  L^2_{0,q-1}{(M,\normm{\cdot}_t)}$ 
	and $ G_{q,{t}}\dbars_{b,{t}}:L^2_{0,q+1}{(M,\normm{\cdot}_t)}\rightarrow  L^2_{0,q}{(M,\normm{\cdot}_t)}$ are continuous.
	Additionally, $ \dbars_{b,{t}} G_{q,{t}}:H^s_{0,q}{(M,\normm{\cdot}_t)}\rightarrow  H^s_{0,q-1}{(M,\normm{\cdot}_t)}$ and $  G_{q,{t}}\dbars_{b,{t}}:H^s_{0,q+1}{(M,\normm{\cdot}_t)}\rightarrow  H^s_{0,q}{(M,\normm{\cdot}_t)}$
	are continuous if $ t\geq T_s $.
	\item The canonical solution operators for $ \dbars_{b,{t}} $, $ \pbb G_{q,{t}}:L^2_{0,q}{(M,\normm{\cdot}_t)}\rightarrow  L^2_{0,q+1}{(M,\normm{\cdot}_t)}$ 
	and  $  G_{q,{t}}\dbarb:L^2_{0,q-1}{(M,\normm{\cdot}_t)}\rightarrow  L^2_{0,q}{(M,\normm{\cdot}_t)}$ are continuous. 
	Additionally, $ \pbb G_{q,{t}}:H^s_{0,q}{(M,\normm{\cdot}_t)}\rightarrow  H^s_{0,q+1}{(M,\normm{\cdot}_t)}$ 
	and $  G_{q,{t}}\dbarb:H^s_{0,q-1}{(M,\normm{\cdot}_t)}\rightarrow  H^s_{0,q}{(M,\normm{\cdot}_t)}$
	are continuous if $ t\geq T_s $.
	\item The Szeg\"o projections $ S_{q,t}=I-\dbars_{b,{t}}\pbb G_{q,{t}} $ and $ S_{q-1,t}=I-\dbars_{b,{t}} G_{q,{t}}\pbb $ are
	 continuous on $L^2_{0,q}{(M,\normm{\cdot}_t)} $ and $L^2_{0,q-1}{(M,\normm{\cdot}_t)}$, respectively.	
	  Additionally, if $ t\geq T_s $ then $S_{q,t} $ and $S_{q-1,t}$ are continuous on $ H^s_{0,q}{(M,\normm{\cdot}_t)}$ 
	  and $H^s_{0,q-1}{(M,\normm{\cdot}_t)}$, respectively.
\end{enumerate}
\end{thm}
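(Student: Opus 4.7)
The plan is to derive Theorem~\ref{thm:mainthm, Sobolev} from Proposition~\ref{prop:mainestimate} in two stages: first establish the $L^2$ conclusions in (i)--(viii), then bootstrap to $H^s$ via a carefully monitored elliptic regularization that keeps the harmonic decomposition intact throughout.

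At the $L^2$ level, I would apply the basic estimate to $u\in \DQ$ orthogonal to $\opH_{{t}}^q(M)$, obtaining a coercive inequality of the form $\normm{u}_t^2 \lesssim \normm{\pbb u}_t^2 + \normm{\pbbpm u}_t^2$ as soon as $t$ is large. Standard Hilbert space arguments (contrapositive characterization of closed range applied to $\pbb$ and $\pbbpm$, together with the fact that a Cauchy sequence in the orthogonal complement of the kernel has a coercively controlled pre-image) give the $L^2$ parts of (i)--(iii) at once. Finite-dimensionality of $\opH_{{t}}^q(M)$ in (iv) follows because the basic estimate forces the unit ball in $\opH_{{t}}^q(M)$ to sit in a relatively compact subset of $L^2$. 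The complex Green operator in (v) is then defined as the partial inverse of $\Box_{b,t}$ on $\opH_{{t}}^q(M)^\perp$, and its continuity is just a restatement of the coercive inequality.

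For the Sobolev statements, the idea is to feed the tangential pseudodifferential operator $\Lambda^s$ into Proposition~\ref{prop:mainestimate} applied to $\Lambda^s u$ rather than $u$. The commutators $[\Lambda^s,\pbb]$ and $[\Lambda^s,\pbbpm]$ are tangential of order $s$ (no boundary obstruction appears, since there is no boundary), and by taking $t\geq T_s$ large one can absorb the lower-order error terms into the weighted inner product on the left. To make this rigorous I would introduce an elliptic regularization $\Box_{b,t}^\delta$, a self-adjoint elliptic perturbation of $\Box_{b,t}$ with $\delta>0$, whose inverse $G_{q,t}^\delta$ is a priori smooth; the commutator estimate above, applied uniformly in $\delta$, yields an $H^s$-bound on $G_{q,t}^\delta$ independent of $\delta$, and passing $\delta\to 0$ delivers continuity of $G_{q,t}$ on $H^s_{0,q}(M,\normm{\cdot}_t)$. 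From this, the closed-range statements in (i)--(iii), the Sobolev continuity of $G_{q,t}$ in (v), and the canonical-solution continuities in (vi)--(vii) are formal consequences.

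The hard part, flagged in the introduction, is that the regularization must be set up so that $G_{q,t}^\delta$ already respects orthogonality to the \emph{unregularized} harmonic space $\opH_{{t}}^q(M)$, and so that $\opH_{{t}}^q(M) \subset C^\infty$ before the limit is taken. Following \cite{KhRa20,HaRa20SCRE}, the fix is to prove regularity of $\opH_{{t}}^q(M)$ at the start: take $h\in\opH_{{t}}^q(M)$, feed $\Lambda^s h$ into the basic estimate, and use the same commutator-absorption device above to obtain $\normm{\Lambda^s h}_t\lesssim_s \normm{h}_t$, which forces $h\in C^\infty$. With $\opH_{{t}}^q(M)$ smooth, the orthogonal projection onto $\opH_{{t}}^q(M)^\perp$ preserves Sobolev regularity, so one can modify $\Box_{b,t}^\delta$ to act only on $\opH_{{t}}^q(M)^\perp$ and thereby inherit, rather than destroy, the harmonic decomposition as $\delta\to 0$. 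Once this is done, the Szeg\"o statements in (viii) follow from $S_{q,t}=I-\pbbpm\pbb G_{q,t}$ and its adjoint-type companion, which are continuous on both $L^2$ and $H^s$ because each factor is; the remainder of the proof mirrors \cite[Theorem 1.2]{HaRa11}.
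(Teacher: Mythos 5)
Your overall architecture matches the paper's: basic estimate $\Rightarrow$ coercivity off the harmonic space $\Rightarrow$ $L^2$ closed range, finite-dimensionality, and existence of $G_{q,t}$; then an elliptic regularization, uniform in the regularization parameter, to get $H^s$ bounds for $t\geq T_s$, with the harmonic forms treated first so that orthogonality to $\opH^q_t(M)$ survives the limit. The $L^2$ portion and the treatment of $G_{q,t}$ on ${}^\perp\opH^q_t(M)$ (Riesz representation for $Q^\delta_{b,t}$ on ${}^\perp\opH^q_t(M)\cap H^1$, uniform-in-$\delta$ estimates, $\delta\to0$) are essentially the paper's argument.

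The genuine gap is your regularity argument for the harmonic forms, which is exactly the delicate new step the paper is organized around. You propose to ``feed $\Lambda^s h$ into the basic estimate'' for $h\in\opH^q_t(M)$, but $h$ is a priori only in $L^2$, so $\Lambda^s h$ need not lie in $L^2$, let alone in $\Dom(\pbb)\cap\Dom(\pbbpm)$; the inequality you want is an a priori estimate with no justification that either side is finite, and the commutators $[\Lambda^s,\pbb]h$, $[\Lambda^s,\pbbpm]h$ are of the \emph{same} order $s$ (absorption works only through the factor $t$ on the left, not because they are lower order). Your own regularization cannot rescue this: $\Box^\delta_{b,t}$ restricted to $\opH^q_t(M)^\perp$ says nothing about the harmonic forms themselves, and on all of $L^2$ the form $Q_{b,t}+\delta Q_{d_b}$ is not coercive uniformly in $\delta$ precisely because of the kernel. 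The paper's device is the two-parameter form $Q^{\delta,\nu}_{b,t}=Q_{b,t}+\delta Q_{d_b}+\nu(\cdot,\cdot)_t$: the $\nu$-term restores coercivity with constant $1/\nu$ independent of $\delta$ and without any orthogonality hypothesis, the $\delta$-term makes the operator elliptic so that $\gqtdn\vp$ genuinely lies in $H^{s+2}$ and the commutator argument is legitimate (not merely formal), and after letting $\delta\to0$ one obtains the bound \eqref{eq:cp2} for $\gqtzn=(\Box_{b,t}+\nu)^{-1}$, from which the inclusion $\opH^q_t(M)\subset H^s_{0,q}(M,\normm{\cdot}_t)$ follows by Kohn's argument in \cite[Proposition 5.2]{Koh73}. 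Your proposal contains no analogue of the $\nu$-term or of this last step. Relatedly, your conclusion ``$h\in C^\infty$'' overstates what the method yields: since the absorption requires $t\geq T_s$ with $T_s$ depending on $s$, a fixed weight $t$ gives only $\opH^q_t(M)\subset H^s$ for those $s$ with $T_s\leq t$, which is what Theorem \ref{thm:mainthm, Sobolev} asserts and all that is needed. (A smaller caveat of the same flavor: continuity of $S_{q,t}=I-\pbbpm\pbb G_{q,t}$ on $H^s$ is not a formal consequence of ``each factor is continuous,'' since $\pbb$ and $\pbbpm$ are unbounded; it comes from the $Q_{b,t}(\Lambda^s G_{q,t}\vp,\Lambda^s G_{q,t}\vp)$ estimates, as in \cite{HaRa11}.)
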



%
%
\section{Definitions and Notation}\label{sec:defs}

\subsection{CR manifolds}
%
%
\begin{definition}\label{def:CRstructure}
	Let $M$ a smooth manifold of real dimensional $2n-1$. M is called a \emph{CR-manifold of hypersurface type} 
	if $M$ is equipped with a subbundle of the complexified tangent bundle $\C T(M)$ denoted by $\mathbb{L}$ satisfying:
	
	\begin{enumerate}[(i)]
		\item $\dim_{\C}\mathbb{L}_x=n-1$ where $\mathbb{L}_x$ is the fiber {over} $x\in M$.
		\item $\mathbb{L}_x\cap \overline{\mathbb{L}}_x=\lla{0}$ where $\overline{\mathbb{L}}_x$ is the complex conjugate of $\mathbb{L}_x$.
		\item If $L,L'\in \mathbb{L}$ then $\corc{L,L'}:=LL'-L'L$ is in $\mathbb{L}$. 
	\end{enumerate}
	
\end{definition}
$\mathbb{L}$ is called the CR structure of $M$. 
Since $M$ is embedded in $\C^N$, 
we define $T_z^{1,0}(M)=T_z^{1,0}(\C^N)\cap T_z(M)\otimes \C $ (under the natural inclusion). Since the complex dimension of  the CR structure
is $ n-1 $ for all $ z\in M $, we can set $\mathbb{L}= T^{1,0}(M)=\bigcup_{z\in M}T^{1,0}_z(M) $, and this defines a CR structure on $ M $ 
that called the \emph{induced CR structure} on $ M $.

For this paper, we consider only smooth, orientable $CR$ manifolds of hypersurface type embedded in a complex space $\C^N${, though
our techniques should generalize to Stein manifolds, a topic that we do not pursue here to notational simplicity and clarity}. 
Let $T^{p,q}(M)$  denote the space of exterior algebra generated by $T^{1,0}(M)$ and $T^{0,1}(M)$. 
Let $\Lambda^{p,q}(M)$ denote the bundle of $(p,q)$-forms on $T^{p,q}(M)$, this is $\Lambda^{p,q}(M)$ 
consist of skew-symmetric multilinear maps of $T^{p,q}(M)$ into $\C$. Because we are in $\C^N$, our calculations do not depend on $p$, and
we therefore set $p=0$ for the remainder of the manuscript.

\subsection{{$\dbarb$ on embedded manifolds}}
Since
$M \subset\C^N$ for some $N \geq n$, and our CR structure is the induced one,  it is natural to use the induced metric on $\C T(M)$, 
denoted by $\la \cdot, \cdot \ra_x$ for each $x\in M$. The metric $\la\cdot,\cdot\ra_x$ is compatible with the
induced CR structure in the sense that the vector spaces $T^{1,0}_x$ and $T^{0,1}_x$ are orthogonal.
We use the inner product on $\Lambda^{0,q}(M)$ given by
\[
( \vp,\psi )_0 = \int_M \la \vp,\psi \ra_x \, dV
\]
where $dV$ is the volume element on $M$. The involution condition (iii) in Definition \ref{def:CRstructure} means that $ \pbb $ 
can be defined as the restriction of the Rham exterior derivative $ d $ to $ \Lambda^{0,q}(M) $.

The Hermitian inner product above gives rise to an $ L^2 $-norm $ \|\cdot\|_{{0}} $, and we also denote the closure of $ \pbb $ in this 
norm by $ \pbb $ (by an abuse of notation). In this way, $ \pbb:L^2_{0,q}(M)\rightarrow L^2_{0,q+1}(M) $ is a 
well-defined, closed, densely defined operator, and we define 
$ \pbba: L^2_{0,q+1}(M)\rightarrow L^2_{0,q}(M) $ to be {its} $ L^2 $ adjoint. 
The Kohn Laplacian $ \Box_b:L^2_{0,q}(M)\rightarrow L^2_{0,q}(M) $ is defined as
\[
\Box_b:=\pbba\pbb+\pbb\pbba.
\]

\subsection{The Levi form}
From the CR structure on $M$, there is a local orthonormal basis $L_1,...,L_{n-1}$ of the $(1,0)$-vector fields in a neighborhood
$U$ of a point $x\in M$. Let $\om_1,\dots,\om_{n-1}$ be the dual basis of $(1,0)$-forms so that $\la \om_j,L_k\ra = \delta_{jk}$. 
This means $\bar L_1,\dots,\bar L_{n-1}$ is a orthonormal basis of $T^{0,1}(U)$ with dual basis $\omb_1,\dots,\omb_{n-1}$ in $U$. Finally,
there is vector $T$, taken purely imaginary, so that $\{L_1,\dots,L_{n-1},\bar L_1,\dots,\bar L_{n-1}, T\}$ is an orthonormal basis
of $T(U)$. Since $M$ is oriented, there exists a globally defined $1$-form $\gamma$ that annihilates $T^{1,0}(M)\oplus T^{0,1}(M)$ and is normalized
so that $\la \gamma, T \ra = -1$.

\begin{defn}\label{defn: Levi form}
The \emph{Levi form} at a point $x\in M$ is the Hermitian form given by $\intprod{d\gamma_x}{L\wedge \bar{L}'}$ 
for any $L,L'\in T^{1,0}_x(U)$, and $ U $ is a neighborhood of $x\in M$.
\end{defn}
Cartan's formula implies that for any $L,L'\in T^{1,0}(M)$, we have
\begin{equation}\label{eq:cartan}
\intprod{d\gamma}{L\wedge \bar{L}'}=-\intprod{\gamma}{\corc{L,\bar{L}'}}.
\end{equation}
In local coordinates, for any $1 \leq j,k \leq n-1$, 
\[
\corc{L_j,\ba{L}_k}=c_{jk}T\ mod\ T^{1,0}(U)\oplus T^{0,1}(U)
\]
so that $\intprod{d\gamma}{L_j\wedge\ba{L}_k}=c_{jk}$. We will call $\corc{c_{jk}}_{1\leq j,k\leq n-1}$ the \emph{Levi matrix} with
respect to $L_1,...,L_{n-1},T$. 

Let $\mu_1,...,\mu_{n-1}$ be the eigenvalues of $\corc{c_{jk}}$ such that $\mu_1\leq\mu_2\leq...\leq\mu_{n-1}$. The CR structure is called (strictly) pseudoconvex in some point $ p\in M $ if the matrix $ \corc{c_{jk}(p)} $,
is positive (definite) semidefinite. If the CR structure is (strictly) pseudoconvex in every point, then it is called (strictly) pseudoconvex. 

Now, we introduce the main geometric condition for our CR manifolds, given by Harrington and Raich in \cite{HaRa15}.


\begin{definition}\label{defn:weak Z(q)}
For $1\leq q\leq n-1$ we say $M$ satisfies \emph{$Z(q)$-weakly} if there exists a real $\Upsilon\in T^{1,1}(M)$ satisfying
\begin{enumerate}[(A)]
	\item $\vab{\theta}^2\geq (i\theta\wedge\ba{\theta})(\Upsilon)\geq 0$ for all $\theta\in \Lambda^{1,0}(M)$
	\item $\mu_1+\mu_2+{\cdots}+\mu_{q}-i\ip{d\gamma_x,\Upsilon}\geq 0$ where $\mu_1,...,\mu_{n-1}$ are the eigenvalues of the Levi form at $x$
	in increasing order.
	\item $\omega(\Upsilon)\neq q$ where $\omega$ is the $(1,1)$-form associated to the induced metric on $\C T(M)$.
\end{enumerate}
We say that $M$ satisfies \emph{weak $Y(q)$} if $M$ satisfies both $Z(q)$-weakly and $Z(n-q-1)$-weakly.
\end{definition}

For example, it is easy to see that if  $ M $ is pseudoconvex, then $M$ satisfies weak $Z(q)$ for any $ 1\leq q \leq n-1$ {with $\Upsilon=0$}. 
Please see \cite{HaRa15,HaPeRa15,HaRa18} for a discussion of the weak $Z(q)$ property. 
The symmetric hypotheses on form levels
on $q$ and $n-1-q$ are necessary due a Hodge-* operator \cite{RaSt08,BiSt17}. 

\begin{remark}
If $ M $ is a CR manifold satisfying  $Y(q)$ weakly, then $ \Upsilon $ corresponding to weak $ Z(q) $, which we denote by
$\Upsilon_q$, may be unrelated to the
$ \Upsilon $ that corresponds to weak $ Z(n-q-1) $ (similarly denoted by $\Upsilon_{n-1-q}$). 
\end{remark}
Given a function $\vp$ defined near $M$, we define the two form
\[
\Theta^\vp = \frac 12 \Big(\p_b\dbarb\vp - \dbarb\p_b\vp\Big) + \frac 12 \nu(\vp)\, d\gamma
\]
where $\nu$ is the real part of the complex normal to $M$. When we work locally, we often associate $\Theta^\vp$ with 
the matrix $\Theta^\vp_{jk} = \la \Theta^\vp, L_j \wedge \bar L_k \ra$. We know that for such $\vp$
\[
\Big\la \frac 12 \big(\p\dbar\vp - \dbar\p\vp\big), L \wedge \bar L \Big\ra 
= \big\la \Theta^\vp, L \wedge \bar L\big\ra
\]
which means $\Theta^{|z|^2}= \p\dbar|z|^2=\omega$ \cite[Proposition 3.1]{HaRa11}.

%
%
\section{Local Coordinates and Pseudodifferential  Operators}\label{sec:pseudo}

\subsection{Pseudodifferential Operators}\label{subsec:PseuO}
We follow the setup {from} \cite{Rai10}. By the compactness of $M$, there exists a finite cover $\lla{U_{{\mu}}}_{{\mu}}$, so each $U_{{\mu}}$ has a special boundary system and can be parameterized by a hypersurface in $\C^n$ ($U_{{\mu}}$ may be shrunk as necessary).\\

Let $\xi=(\xi_1,...,\xi_{2n-2},\xi_{2n-1})=(\xi',\xi_{2n-1})$ be the coordinates in Fourier space so that 
$\xi'$ is the dual variable to the variables in the maximal complex tangent space and 
$\xi_{2n-1}$ is dual to the totally real part of $T(M)$, i.e., the ``bad" direction $T$. Define
\begin{eqnarray*}
\opC^+ &=&\lla{ \xi:\xi_{2n-1}\geq \dfrac{1}{2}\vab{\xi'} \text{ and } \vab{\xi}\geq 1};\ \ \ \ \opC^- =\lla{ \xi:-\xi \in \opC^+};\\
\opC^0 &=&\lla{ \xi: -\dfrac{3}{4}\vab{\xi'}\leq \xi_{2n-1}\geq \dfrac{3}{4}\vab{\xi'}}\cup\lla{\xi:\vab{\xi}\leq 1}.
\end{eqnarray*}
$ \opC^+ $ and $ \opC^- $ are disjoint, but both intersect $ \opC^0 $ nontrivially. 
Next, let $ \psi^+, \psi^- $ and $ \psi^0 $ be smooth functions on the unit sphere so that
\begin{align*}
	 \psi^+(\xi)&=1 \text{ when } \xi_{2n-1}\geq \frac{3}{4}\vab{\xi'} \text{ and } \supp\psi^+\subset\lla{ \xi: \xi_{2n-1}\geq \frac{1}{2}\vab{\xi'} };\\
	 \psi^-(\xi)&=\psi^+(-\xi);\\
	 \psi^0(\xi)& \text{ satisfies } \psi^0(\xi)^2=1-\psi^+(\xi)^2-\psi^-(\xi)^2.
\end{align*}
Extend $ \psi^+,\psi^-, $ and  $ \psi^0 $ homogeneously outside of the unit ball, i.e., if $ \vab{\xi}\geq 1 $, then
\[
\psi^+(\xi)=\psi^+(\xi/\vab{\xi}),\ \  \psi^-(\xi)=\psi^-(\xi/\vab{\xi}),\  \text{ and }\  \psi^0(\xi)=\psi^0(\xi/\vab{\xi}).
\]
Finally, extend $ \psi^+,\psi^- $ and $ \psi^0 $ smoothly inside the unit ball so that $ (\psi^+)^2+(\psi^-)^2+(\psi^0)^2=1 $ {and $\psi^+$ and $\psi^-$ are supported
away from $B(0,\frac12)$}. 
For a fixed constant $A>0$ to be chosen later, define for any $t>0$,
\begin{equation*}
\psi^+_t(\xi)=\psi^+(\xi/(tA)), \psi^-_t(\xi)=\psi^-(\xi/(tA)), \text{ and } \psi^0(\xi)=\psi^0(\xi/(tA)).
\end{equation*}
Let $ \Psi^+_t,\Psi^-_t, $ and  $ \Psi^0_t $ be the pseudodifferential operators of order zero with 
symbols $ \psi^+_t,\psi^-_t, $ and  $ \psi^0_t $, respectively. The equality $ (\psi^+_t)^2+(\psi^-_t)^2+(\psi^0_t)^2=1 $ implies that
\begin{equation*}
 (\Psi^+_t)^*\Psi^+_t+(\Psi^-_t)^*\Psi^-_t+(\Psi^0_t)^*\Psi^0_t=I.
\end{equation*}
Suppose $\psi$ and $\tilde\psi$ are cut-off functions so that $\tilde{\psi}|_{{\supp}\psi}\equiv 1$. 
If $\Psi$ and $\tilde{\Psi}$ are pseudodifferential operators with symbols $\psi$ and $\tilde{\psi}$, respectively, 
then we say that $\tilde{\Psi}$ \emph{dominates} $\Psi$.

For each ${{\mu}} $, let $ \Psi^+_{{{\mu}},t},\Psi^-_{{{\mu}},t} $, and  $ \Psi^0_{{{\mu}},t} $ be the operators $\Psi^+_t,  \Psi^-_t$, and $\Psi^0_t$, respectively, defined on 
$ U_{{\mu}} $, where $ \opC^+_{{{\mu}}},\opC^-_{{{\mu}}} $ are  
$ \opC^0_{{{\mu}}}$ be the corresponding regions of $ \xi$-space dual to $ U_{{\mu}} $.
It follows that
\begin{equation*}
(\Psi^+_{{\mu},t})^*\Psi^+_{{\mu},t}+(\Psi^-_{{\mu},t})^*\Psi^-_{{\mu},t}+(\Psi^0_{{\mu},t})^*\Psi^0_{{\mu},t}=I.
\end{equation*}

Additionally, 
let $ \tilde{\Psi}^+_{\mu,t} $ and $ \tilde{\Psi}^-_{\mu,t} $ be pseudodifferential operators that dominate $ \Psi^+_{\mu,t} $ and $ \Psi^-_{\mu,t} $ respectively (where $ \Psi^+_{\mu,t} $ and $ \Psi^-_{\mu,t} $ are defined on some $U_\mu$ ). If $\tilde{\opC}^+_\mu$ and  $\tilde{\opC}^-_\mu$ are the supports of the symbols of $ \tilde{\Psi}^+_{\mu,t} $ and $ \tilde{\Psi}^-_{\mu,t} $, respectively, then we can choose $ \lla{U_\mu} $, $ \tilde{\psi}^+_{\mu,t} $, and $ \tilde{\psi}^-_{\mu,t} $ so that the following result holds \cite{Nic06}.

\begin{lemma}[Lemma 4.3, \cite{Nic06}]
Let M be a compact, orientable, embedded CR-manifold. There is a finite open covering $ \lla{U_\mu}_\mu $ of $M$ so that if 
$ U_\mu,U_{{\mu'}} \in \lla{U_\mu}$ have nonempty intersection, then there exits a diffeomorphism $ \vartheta $ between 
$ U_\mu $ and $ U_{{\mu'}} $ with Jacobian $ \mathcal{J}_\vartheta $ such that

\begin{enumerate}[(i)]
	\item $ ^t\mathcal{J}_\vt(\opC_\mu^+)\cap \opC_{{\mu'}}^-=\emptyset $ and $ \opC_{{\mu'}}^+\cap  {}^t\mathcal{J}_\vt(\opC_{{\mu}}^-)=\emptyset $ where $ {^t\mathcal{J}_\theta} $ is the inverse of the transpose of the Jacobian of $ \vt $;\\
	\item let $ {}^\vt\Psi_{t,\mu}^+,{}^\vt\Psi_{t,\mu}^- $ and $ {}^\vt\Psi_{t,\mu}^0 $ be the transfer of $ \Psi_{t,\mu}^+,\Psi_{t,\mu}^- $ and $ \Psi_{t,\mu}^0 $, respectively via $ \vt $, then on $ \lla{ \xi:\xi_{2n-1}\geq \frac{4}{5}\vab{\xi'} \text{ and } \vab{\xi}\geq(1+\ve)tA } $, the principal symbol of $ {}^\vt\Psi_{t,\mu}^+ $is identically equal to 1, on $ \lla{ \xi:\xi_{2n-1}\leq -\frac{4}{5}\vab{\xi'} \text{ and } \vab{\xi}\geq(1+\ve)tA } $, the principal symbol of $ {}^\vt\Psi_{t,\mu}^- $is identically equal to 1, and on $ \lla{ \xi:-\frac{1}{3}\vab{\xi'}\leq \xi_{2n-1}\leq \frac{1}{3}\vab{\xi'} \text{ and } \vab{\xi}\geq(1+\ve)tA } $, the principal symbol of $ {}^\vt\Psi_{t,\mu}^0 $is identically equal to 1, where $\ve>0$ and can be very small.\\
	\item Let $ {}^\vt\tilde{\Psi}_{t,\mu}^+,{}^\vt\tilde{\Psi}_{t,\mu}^- $ be the transfer via $ \vt $ of $ \tilde{\Psi}_{t,\mu}^+,\tilde{\Psi}_{t,\mu}^- $ respectively.	Then the principal symbol of $ {}^\vt\tilde{\Psi}_{t,\mu}^+ $ is identically 1 on $ \opC_{{\mu'}}^+ $ 
and the principal symbol of ${}^\vt\tilde{\Psi}_{t,\mu}^-$ is identically 1 on $\opC_{{\mu'}}^-$;\\
	\item $\tilde{\opC}_{{\mu'}}^+\cap \tilde{\opC}_{{\mu'}}^-=\emptyset$.
\end{enumerate}

\end{lemma}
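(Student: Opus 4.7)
The plan is to construct the cover iteratively, exploiting that the totally real direction $T$ is (up to sign) canonically defined on the oriented manifold $M$, so that transition maps between charts can be arranged to nearly preserve $T$ and hence approximately preserve the splitting $\xi=(\xi',\xi_{2n-1})$ on which the cones $\mathcal{C}^\pm,\mathcal{C}^0$ are defined.

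First I would start with any finite cover $\{U_\mu\}$ by coordinate charts adapted to local orthonormal frames $L_1,\dots,L_{n-1},T$ as in Section \ref{sec:defs}. On an intersection $U_\mu\cap U_{\mu'}$ with transition diffeomorphism $\vartheta$, write the Jacobian in the block form corresponding to the splitting into the $(2n-2)$-dimensional complex tangential directions and the one totally real direction. Orientability, together with the canonicity of $T$ up to sign, forces the $(2n-1,2n-1)$-entry of $\mathcal{J}_\vartheta$ (after possibly a global sign choice) to be $1+O(\mathrm{diam}(U_\mu\cap U_{\mu'}))$ and the off-diagonal entries coupling $T$ to the complex tangential directions to be $O(\mathrm{diam})$. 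Consequently $^t\mathcal{J}_\vartheta$ acts on $\xi$ by $(\xi',\xi_{2n-1})\mapsto (\xi'+O(\delta)\xi_{2n-1},\,\xi_{2n-1}+O(\delta)|\xi'|)$ where $\delta$ shrinks with chart size.

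Next I would shrink each $U_\mu$ until $\delta$ is small enough (say $\delta<1/100$) that the cone inclusions needed in (i)--(iv) all hold with a definite margin. Specifically, if $\xi\in\mathcal{C}^+_\mu$, i.e.\ $\xi_{2n-1}\geq\frac12|\xi'|$ and $|\xi|\geq 1$, then $(^t\mathcal{J}_\vartheta\xi)_{2n-1}\geq \frac12|\xi'|-O(\delta)|\xi'|>0$ while $|(^t\mathcal{J}_\vartheta\xi)'|\leq |\xi'|+O(\delta)\xi_{2n-1}$, giving $(^t\mathcal{J}_\vartheta\xi)_{2n-1}\geq c|(^t\mathcal{J}_\vartheta\xi)'|$ for an explicit $c>0$; in particular $^t\mathcal{J}_\vartheta(\mathcal{C}^+_\mu)$ misses $\mathcal{C}^-_{\mu'}$, yielding (i). The thresholds $\tfrac12,\tfrac34,\tfrac45,\tfrac13$ in the statement are chosen with exactly this margin in mind: for $\delta$ small enough the sets $\{\xi_{2n-1}\geq\frac45|\xi'|,\,|\xi|\geq(1+\varepsilon)tA\}$ in chart $\mu'$ are mapped by $^t\mathcal{J}_\vartheta^{-1}$ into $\{\xi_{2n-1}\geq\frac34|\xi'|,\,|\xi|\geq tA\}$ in chart $\mu$, where the symbol $\psi^+_{\mu,t}$ equals $1$. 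Homogeneity of the principal symbol then gives that the transferred principal symbol ${}^\vartheta\psi^+_{\mu,t}$ equals $1$ on the required region, proving (ii), and similarly for (iii).

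For (iv), I would select the dominating cutoffs $\tilde\psi^\pm_{\mu,t}$ to be supported on cones strictly larger than $\mathrm{supp}\,\psi^\pm_{\mu,t}$ but still satisfying, say, $\tilde{\mathcal{C}}^+_\mu\subset\{\xi_{2n-1}\geq\tfrac14|\xi'|\}$ and $\tilde{\mathcal{C}}^-_\mu\subset\{\xi_{2n-1}\leq -\tfrac14|\xi'|\}$. These remain disjoint by construction, and after another possible shrinking of the $U_\mu$ (so that $\delta$ is small compared to $\tfrac14$), condition (iv) persists after transfer. The main obstacle is the bookkeeping: one must choose the chart diameters and the widths of the $\tilde\psi^\pm$ cones in the correct order, because shrinking the charts to gain control of $\delta$ for condition (iv) must not destroy the margin already used in (i)--(iii). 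This is handled by fixing the margins in the target cones of (ii) first, then choosing $\tilde\psi^\pm$ with enough room, and finally refining the cover by a Lebesgue-number argument so that all pairwise overlaps satisfy the tightest required bound simultaneously.
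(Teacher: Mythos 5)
The paper itself does not prove this lemma; it is quoted from \cite{Nic06}, and your strategy --- use orientability to get the globally defined bad direction $T$, work in small coordinate charts adapted to orthonormal frames so that the transition maps nearly preserve the splitting $\xi=(\xi',\xi_{2n-1})$, and let the numerical margins $\tfrac12<\tfrac34<\tfrac45$ and $\tfrac13$ absorb the $O(\delta)$ distortion of $^t\mathcal{J}_\vartheta$, with a final Lebesgue-number refinement --- is exactly the standard argument behind Nicoara's proof, and it does deliver (i) and (ii). Two points should be made explicit: first, you need not only that the entries coupling $T$ to the complex-tangential directions are $O(\delta)$ and that the $(2n-1,2n-1)$ entry is near $1$, but also that the tangential block of $^t\mathcal{J}_\vartheta$ is a near-isometry (this is what adapting the coordinates to frames orthonormal for the induced metric buys you); otherwise $|\xi'|$ is not nearly preserved and the aperture comparisons such as $\tfrac45\to\tfrac34$ break down. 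Second, since the cones are dilation-invariant and the radial thresholds scale linearly, your smallness parameter $\delta$ works uniformly in $t$ and $A$, with the cover chosen after $\varepsilon$ is fixed; this uniformity is essential because the cover must not depend on $t$.

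The genuine gap is in (iii). Domination only forces $\tilde\psi^{+}_{\mu,t}\equiv1$ on $\supp\psi^{+}_{\mu,t}$, and $\supp\psi^{+}_{\mu,t}\subset\{\xi_{2n-1}\ge\tfrac12|\xi'|,\ |\xi|\ge tA/2\}$, whereas (iii) requires the transferred symbol to equal $1$ on all of $\mathcal{C}^{+}_{\mu'}=\{\xi_{2n-1}\ge\tfrac12|\xi'|,\ |\xi|\ge1\}$, whose image under $^t\mathcal{J}_\vartheta$ reaches down to $|\xi|\approx 1$ and to apertures slightly below $\tfrac12$. So your choice, which constrains only the \emph{supports} of $\tilde\psi^{\pm}$ (inside a $\tfrac14$-aperture cone) and otherwise appeals to domination, does not imply (iii): with $tA$ large, nothing you have said controls $\tilde\psi^{+}_{\mu,t}$ on the low- and intermediate-frequency part of $\mathcal{C}^{+}_{\mu'}$, nor on apertures between $\tfrac12-O(\delta)$ and $\tfrac12$. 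The fix is to build the required property into the choice: demand that $\tilde\psi^{\pm}_{\mu,t}\equiv1$ on a fixed truncated cone strictly larger than $\mathcal{C}^{\pm}$, e.g.\ $\{\pm\xi_{2n-1}\ge\tfrac25|\xi'|,\ |\xi|\ge\tfrac12\}$, with support still inside, say, $\{\pm\xi_{2n-1}\ge\tfrac13|\xi'|,\ |\xi|\ge\tfrac14\}$. Then your $O(\delta)$ margin argument yields (iii), and (iv) --- which is a statement within the single chart $U_{\mu'}$, no transfer involved, so no further shrinking of the cover is relevant there --- is immediate from the disjointness of the two support cones. With that choice inserted, your proof goes through and coincides with the construction in \cite{Nic06}.
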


We will suppress the left superscript $ \vt $ as it should be clear from the context which pseudodifferential operator must be transferred. 
If $P$ is any of the operators $ \Psi_{t,\mu}^+,\Psi_{t,\mu}^- $ or $ \Psi_{t,\mu}^0 $ then it is immediate that
\begin{equation*}
D_{\xi}^\alpha\sigma(P)=\dfrac{1}{\vab{t}^\alpha} q_\alpha(x,\xi)
\end{equation*}
for $ \vab{\alpha}\geq 0 $, where $ q_\alpha(x,\xi) $ is bounded independently of $t$.

\subsection{Norms}
%
%
If $ \phi  $ is a real function defined on $ M $, then define the weighted Hermitian inner for 
$ (0,q) $-forms $ f $ and $ g $, denoted by $ \pare{f,g}_\phi$ by $ \pare{f,g}_\phi =\pare{e^{-\phi}f,g}_0 $.
For example, if $ f=\sum_{J\in \I_q}f_J\bar{\omega}^J $ is a (0,q)-form supported on neighborhood $ U $, where 
$ \I_q = \{J = (j_1,\dots,j_q) : 1 \leq j_1 < j_2 < \cdots < j_q\}$  and $ \omega^J=\omega_{j_1}\wedge\dots\wedge\omega_{j_q} $.
The weighted $L^2$-norm on $(0,q)$-forms is 
$ \norm{f}_\phi^2:=\sum_{J\in \I_q}\norm{f_J}_\phi^2$ where $ \norm{f_J}_\phi^2=\int_M \vab{f_J}^2e^{-\phi}dV $,
and we denote the corresponding weighted $L^2$ space by $L^2_{0,q}(M,e^{-\phi})$.

We now construct a norm that is well adapted to the microlocal analysis. 
Let   {
$\{U_\mu\}_\mu$ be an covering of $M$ that admits the family of pseudodifferential operators $\{\Psi_{\mu,t}^{+},\ \Psi_{\mu,t}^{-} ,\  \Psi_{\mu,t}^{0}\}$
and a partition of unity $ \lla{\zeta_\mu}_\mu $ subordinate to the cover satisfying $ \sum_{\mu}\zeta_\mu^2=1 $.
F}or each $ \mu $ let $ \tilde{\zeta}_\mu $ be a cutoff function that dominates $ \zeta_\mu $ such that $\supp\tilde{\zeta}_\mu\subset U_\mu $, and $ \phi^+ $, $ \phi^- $ smooth functions defined on M. We define the global inner product and norm as follows:
\begin{align*}
\pare{f,g}_{\phi^+,\phi^-}:=\pare{f,g}_{{t}}
				&:=\sum_{{\mu}} \left[ \pare{\tilde{\zeta}_{{\mu}}\Psi_{{\mu},t}^+\zeta_{{\mu}} f^{{\mu}},\tilde{\zeta}_{{\mu}}\Psi_{{\mu},t}^+\zeta_{{\mu}} g^{{\mu}}}_{\phi^+} + \pare{\tilde{\zeta}_{{\mu}}\Psi_{{\mu},t}^0\zeta_{{\mu}} f^{{\mu}},\tilde{\zeta}_{{\mu}}\Psi_{{\mu},t}^0\zeta_{{\mu}} g^{{\mu}}}_{0} \right. \\
				&\left. +\pare{\tilde{\zeta}_{{\mu}}\Psi_{{\mu},t}^-\zeta_{{\mu}} f^{{\mu}},\tilde{\zeta}_{{\mu}}\Psi_{{\mu},t}^-\zeta_{{\mu}} g^{{\mu}}}_{\phi^-} \right]
\end{align*}
and
\[
\normm{f}^2_{\phi^+,\phi^-}:=\sum_{{\mu}} \corc{ \norm{\tilde{\zeta}_{{\mu}}\Psi_{{\mu},t}^+\zeta_{{\mu}} f^{{\mu}}}^2_{\phi^+}
+\norm{\tilde{\zeta}_{{\mu}}\Psi_{{\mu},t}^0\zeta_{{\mu}} f^{{\mu}}}^2_{0} + \norm{\tilde{\zeta}_{{\mu}}\Psi_{{\mu},t}^-\zeta_{{\mu}} f^{{\mu}}}^2_{\phi^-}}
\]
where $ f^{{\mu}} $ and $g^{\mu}$ are the  forms $ f $  and $g$, respectively, expressed in the local coordinates on $ U_\mu$. 
The superscript $\mu$ will often omitted. {In the case
that $\phi^+(z) =  t|z|^2$ or $-t|z|^2$ and $\phi^-(z) = -t|z|^2$ or $t|z|^2$, we denote the norm by $\normm{\cdot}_t$ and in general replace the subscript with
$t$ (e.g., we write $c_t$ for $c_{\phi^+,\phi^-}$).}

For a form $ f $ on $ M $,  the Sobolev norm of order $ s $ is given by  the following:
\[
\norm{f}_{H^s}^2=\sum_{{\mu}}\norm{\tilde{\zeta}_{{\mu}}\Lambda^s\zeta_{{\mu}} f^{{\mu}}}_0^2
\]
where $ \Lambda $ is the pseudodifferential operator with symbol $ (1+\vab{\xi}^2)^{1/2} $. 
In \cite{Nic06}, Nicoara shows that there exist constants $ c_{{\phi^+,\phi^-}} $ and $ C_{{\phi^+,\phi^-}} $ so that
\begin{equation}\label{eq:normequiv}
c_{{\phi^+,\phi^-}}\norm{f}_0^2\leq \normm{f}^2_{\phi^+,\phi^-}\leq C_{{\phi^+,\phi^-}}\norm{f}_0^2.
\end{equation}
Additionally, there exists a invertible self-adjoint operator $ E_{{\phi^+,\phi^-}} $ 
so that $ \pare{f,g}_0=\pare{f,E_{{\phi^+,\phi^-}} g}_{{\phi^+,\phi^-}} $, where $ E_{{\phi^+,\phi^-}} $ is the inverse of 
\[
\sum_{{\mu}} \pare{\zeta_{{\mu}}(\Psi_{{\mu},t}^+)^*\tilde{\zeta}_{{\mu}} e^{-\phi^+}\tilde{\zeta}_{{\mu}}\Psi_{{\mu},t}^+\zeta_{{\mu}} + \zeta_{{\mu}}(\Psi_{{\mu},t}^0)^*\tilde{\zeta}_{{\mu}}^2 \Psi_{{\mu},t}^0\zeta_{{\mu}} + \zeta_{{\mu}}(\Psi_{{\mu},t}^-)^*\tilde{\zeta}_{{\mu}} e^{-\phi^-}\tilde{\zeta}_{{\mu}}\Psi_{{\mu},t}^-\zeta_{{\mu}}}
\]
and {this operator is} bounded in $ L^2(M) $ independently of $ tA\geq 1 $ (see Corollary 4.6 in \cite{Nic06}).

\subsection{$ \pbb $ and its adjoints}

If $ f $ is a function on $ M $, {then} in a local coordinates 
\[ 
\pbb f=\sum_{j=1}^{n-1}\lb_j f\,\omb_j 
\]
and if $ f=\sum_{J\in \I_q} f_J\,\omb^J$ is a $(0,q)$-form, then there exist functions $ m_K^J $ such that
\[
\pbb f=\sum_{J\in \I_q, K\in \I_{q+1}}\sum_{j=1}^{n-1}\epsilon_{K}^{jJ}\lb_jf_J\,\omb^K+ \sum_{J\in \I_q,K\in \I_{q+1}}f_J m_K^J\,\omb^K
\]
 where $ \epsilon_{K}^{jJ}$ is equal to 0 if $ \lla{K}\neq\lla{j}\cup J $ and is the sign of the permutation that reorders $ jJ $ to $ K $ {otherwise}. We also define
\begin{equation}\label{eq:subindex}
f_{jI}=\sum_{J\in \I_q}\epsilon_{J}^{jI}f_J
\end{equation}
(in this case, $I\in\I_{q-1}$). Let $ \lb_j^{*} $ be the adjoint of $ \lb_j $ in $ \pare{\ ,\ }_0 $, $ \lba_j $ be the adjoint of $ \lb_j $ in $ \pare{\ ,\ }_\phi$. 
Then on a small neighborhood $ U $ we will have $ \lb_j^{*}=-L_j+\sigma_j $ and $ \lba_j = -L_j+L_j\phi+\sigma_j$ where 
$ \sigma_j $ is smooth function on $ U $. Because we will need it later, we observe that there are smooth functions $d_{sr}^\ell$ and $\sigma_s$ so that
\begin{equation}\label{eqn:L, bar L commutator}
\corc{ \lb_r,\lba_s } = c_{sr}T+ \lb_rL_s\phi + \sum_{\ell=1}^{n-1}(d_{sr}^\ell L_\ell-\bar{d}^\ell_{rs}\lb_\ell) + \lb_r\sigma_s.
\end{equation}

{We denote the $L^2$ adjoint of $\dbarb$ in $ L^2_{0,q}(M,e^{-\phi})$} by $ \pbb^{*,\phi} $. 
For the remainder of the paper, $ \phi $ stands for either $ \phi^{+} $ or $ \phi^- $ {and
\[
|\phi^+(z)| = |\phi^-(z)| = |t| |z|^2,
\]
though virtually all of our calculations hold for general $\phi$, up to the point when our calculation require an analysis of the eigenvalues of the Levi form.}

To keep track of the terms that arise in our integration by parts, we use the following shorthand 
for forms $f$ supported in a neighborhood $U_{{\mu}}$ (recognizing that these operators depend on
our choice of neighborhoods $\{U_{{\mu}}\}$):
\begin{align*}
\nabla_{{\bar L^{*,\phi}}} f &=\sum_{J\in\I_q}\sum_{j=1}^{n-1}\lba_j f_J\, \omb^J;
& \norm{\bna_\Upsilon f}_\phi^2&=\sum_{J\in\I_q}\sum_{j,k=1}^{n-1}\pare{b^{\ba{k}j}\lb_kf_J,\lb_jf_J}_\phi 
:= \sum_{j,k=1}^{n-1}\pare{b^{\ba{k}j}\lb_kf,\lb_jf}_\phi \\
\nabla_{\bar L} f &=\sum_{J\in\I_q}\sum_{j=1}^{n-1}\lb_jf_J\, \omb^J;
& \norm{{\nabla}_\Upsilon f}_\phi^2&=\sum_{J\in\I_q}\sum_{j,k=1}^{n-1}\pare{\bkj \lba_j f_J,\lba_k f_J}_\phi
:= \sum_{j,k=1}^{n-1}\pare{\bkj \lba_j f,\lba_k f}_\phi
\end{align*}
Again, if $ f=\sum_{J\in \I_q} f_J \,\omb^J$ is defined locally, then
\begin{align*}
\pbb^* f&=\sum_{I\in \I_{q-1},J\in I_{q}}\sum_{j=1}^{n-1}\epsilon_J^{jI}\lb_j^*f_J\,\omb^I+ \sum_{I\in \I_{q-1},J\in I_{q}}f_J\ba{m}_J^I \,\omb^I\\
&=\sum_{I\in \I_{q-1}}\sum_{j=1}^{n-1}\lb_j^*f_{jI}\,\omb^I+\sum_{I\in \I_{q-1},J\in I_{q}}f_J\ba{m}_J^I \,\omb^I 
\end{align*}
and
\begin{align*}
	\pbb^{*,\phi}f&=\sum_{I\in \I_{q-1}}\sum_{j=1}^{n-1}\lba_jf_{jI}\,\omb^I+\sum_{I\in \I_{q-1},J\in \I_q}f_J\ba{m}_J^I \,\omb^I
\end{align*}

Note that a consequence of the compactness of $M$ and the boundedness of $\phi$, the domains of $ \pbba $ 
and $ \pbb^{*,\phi} $ are equal. Also we have $ \pbb^{*,\phi}=\pbba-\corc{\pbba,\phi} $. 
Let $ \pb_{b,{t}}^* $ be the adjoint of $ \pbb $ with respect to the inner product ${(\cdot,\cdot)_t} $.
{We also define the weighted Kohn Laplacian $\Boxb$ by
$ \Box_{b,t}:=\pbb\dbars_{b,{t}} +\dbars_{b,{t}}\pbb$ where
\begin{eqnarray*}
\Dom(\Box_{b,t}):=\lla{ \phi\in L^2_{0,q}(M) :\phi\in \Dom(\pbb)\cap \Dom(\dbars_{b,t}),
\ \pbb\phi\in \Dom(\dbars_{b,t}),\ and\ \dbars_{b,t}\phi\in \Dom(\pbb)}.
\end{eqnarray*}
}

The computations proving Lemmas 4.8 and 4.9 and equation (4.4) in \cite{Nic06} can be applied here with only a change of notation, so we have the following two results, recorded here as Lemmas \ref{lemma:nic1} and \ref{lemma:nic2}. The {consequence} is that 
$ {\pb_{b,{t}}^*} $ acts like {$ \pbb^{*,\phi^+} $ (denoted just by $  \pbb^{*,+} $)} for forms whose support is basically 
$ \opC^+ $ and {$ \pbb^{*,\phi^-} $ (denoted just by $ \pbb^{*,-}$)} on forms whose support is basically $ \opC^- $.

\begin{lemma}\label{lemma:nic1}
On smooth (0,q)-forms,
\begin{eqnarray*}
\pb_{b,{t}}^* &=&\pbba-\sum_\mu \zeta_\mu^2\tilde{\Psi}_{\mu,t}^+\corc{\pbba,\phi^+}+\sum_\mu \zeta_\mu^2\tilde{\Psi}_{\mu,t}^-\corc{\pbba,\phi^-}\\
	&&+\sum_\mu\left( \tilde{\zeta}_\mu\corc{\tilde{\zeta}_\mu \Psi_{\mu,t}^+\zeta_\mu,\pbb}^*\tilde{\zeta}_\mu \Psi_{\mu,t}^+\zeta_\mu+ \zeta_\mu(\Psi_{\mu,t}^+)^*\tilde{\zeta}_\mu\corc{ \pb_b^{*,+},\tilde{\zeta}_\mu\Psi_{\mu,t}^+\zeta_\mu }\tilde{\zeta}_\mu \right.\\
	&&+\left. \tilde{\zeta}_\mu\corc{\tilde{\zeta}_\mu \Psi_{\mu,t}^-\zeta_\mu,\pbb}^*\tilde{\zeta}_\mu \Psi_{\mu,t}^-\zeta_\mu+ \zeta_\mu(\Psi_{\mu,t}^-)^*\tilde{\zeta}_\mu\corc{ \pb_b^{*,-},\tilde{\zeta}_\mu\Psi_{\mu,t}^-\zeta_\mu }\tilde{\zeta}_\mu + E_A  \right)
\end{eqnarray*}
where the error term $ E_A $ is a sum of order zero terms and ``lower order" terms. Also, the symbol of $ E_A $ is supported in $ \opC_\mu^0 $ for each $ \mu $.
\end{lemma}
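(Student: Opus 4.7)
The plan is to compute the adjoint of $\pbb$ with respect to $\pare{\cdot,\cdot}_t$ by expanding $\pare{\pbb f, g}_t$ via the microlocal definition of the inner product and systematically moving $\pbb$ to the other side, one microlocal sector at a time. Starting from
\[
\pare{\pbb f, g}_t = \sum_\mu \sum_{\bullet \in \{+,0,-\}} \pare{\tilde\zeta_\mu \Psi_{\mu,t}^{\bullet}\zeta_\mu\, \pbb f,\ \tilde\zeta_\mu \Psi_{\mu,t}^{\bullet}\zeta_\mu g}_{\phi^{\bullet}}
\]
(with the convention $\phi^0 \equiv 0$), in each summand I would commute the localizing triple past $\pbb$ via $\tilde\zeta_\mu \Psi_{\mu,t}^{\bullet}\zeta_\mu \pbb = \pbb\, \tilde\zeta_\mu \Psi_{\mu,t}^{\bullet}\zeta_\mu + \corc{\tilde\zeta_\mu \Psi_{\mu,t}^{\bullet}\zeta_\mu, \pbb}$, apply the weighted adjoint identity $\pare{\pbb u, v}_{\phi^{\bullet}} = \pare{u, \pb_b^{*,\bullet} v}_{\phi^{\bullet}}$ (where $\pb_b^{*,0} := \pbba$), and commute the localizing triple back through $\pb_b^{*,\bullet}$. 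The resulting expression is then read term by term as $\pare{f, \pb_{b,t}^* g}_t$.

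Upon reassembly, three classes of contributions appear. The principal piece, the global $\pbba$, is obtained by substituting $\pb_b^{*,\pm} = \pbba - \corc{\pbba, \phi^{\pm}}$ into the $\pm$ sectors and invoking the identity that $\sum_\mu \sum_{\bullet}\zeta_\mu(\Psi_{\mu,t}^{\bullet})^* \tilde\zeta_\mu^2 \Psi_{\mu,t}^{\bullet}\zeta_\mu$ agrees with $I$ modulo operators whose symbols are supported in $\opC^0_\mu$. The weight corrections $-\corc{\pbba, \phi^{\pm}}$ survive only in the $\Psi^{\pm}$ sectors and produce the two explicit sums $\mp \sum_\mu \zeta_\mu^2 \tilde\Psi^{\pm}_{\mu,t}\corc{\pbba, \phi^{\pm}}$ appearing in the claimed identity, with the opposing signs reflecting the convention $\phi^+ = t|z|^2$, $\phi^- = -t|z|^2$. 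The two sets of commutators generated by the forward and backward moves then supply precisely the four conjugated commutator terms listed.

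The main obstacle is certifying that everything not of the above three types collapses into an error $E_A$ of order zero whose symbol is supported in $\opC^0_\mu$. Two sources feed $E_A$: the mismatch between $\sum_{\bullet}(\psi_{\mu,t}^{\bullet})^2$ and $1$ at the subprincipal level, which concentrates on the overlap cones where two of the cutoffs are simultaneously nontrivial; and the cross-chart remainders that arise when $\supp\zeta_\mu \cap \supp\zeta_{\mu'} \neq \emptyset$, which are controlled by the transition lemma just recorded from \cite{Nic06}. The derivative bound $D_\xi^\alpha \sigma(\Psi_{\mu,t}^{\bullet}) = t^{-|\alpha|}q_\alpha$ ensures that the error is bounded on $L^2(M)$ uniformly in $tA \geq 1$, which is exactly what the later microlocal estimates require. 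Since this is the weighted $(0,q)$-form analog of \cite[Lemma 4.8]{Nic06}, the overall structure of the calculation should go through with only the $\phi^{\pm}$-bookkeeping added.
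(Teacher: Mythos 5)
Your proposal is correct and follows essentially the same route as the paper, which proves this lemma simply by invoking the computations of Lemma 4.8 of Nicoara \cite{Nic06} "with only a change of notation": expand $(\dbarb f,g)_t$ sector by sector, integrate by parts with the weighted adjoints $\dbar_b^{*,\pm}=\dbarbs-[\dbarbs,\phi^{\pm}]$, commute the localizing triples $\tilde\zeta_\mu\Psi^{\bullet}_{\mu,t}\zeta_\mu$ forward and back, and dump the cutoff/transition remainders into an order-zero error with symbol in $\opC^0_\mu$. One small caution: the relative signs of the two weight-correction sums must come out of the weighted integration by parts and the bookkeeping conventions for $(\cdot,\cdot)_{\phi^-}$ themselves, not from the particular choice $\phi^+=t|z|^2$, $\phi^-=-t|z|^2$ as your parenthetical suggests, so that step deserves a careful trace through Nicoara's conventions rather than the one-line justification you give.
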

We use the following energy forms in our calculations:
\begin{align*}
Q_{b,{t}}(f,g)&=\pare{\pbb f,\pbb g}_{{t}}+\pare{\pb_{b,{t}}^* f,\pb_{b,{t}}^* g}_{{t}}\\
Q_{b,+}(f,g)&=\pare{\pbb f,\pbb g}_{\phi^+}+\pare{\pb_{b}^{*,+} f,\pb_{b}^{*,+} g}_{\phi^+}\\
Q_{b,0}(f,g)&=\pare{\pbb f,\pbb g}_{0}+\pare{\pb_{b}^{*} f,\pb_{b}^{*} g}_{0}\\
Q_{b,-}(f,g)&=\pare{\pbb f,\pbb g}_{\phi^-}+\pare{\pb_{b}^{*,-} f,\pb_{b}^{*,-} g}_{\phi^-}.
\end{align*}
{The space of weighted harmonic forms $ \opH_{{t}}^q $ is defined by
\begin{align*}
\opH_t^q&:=\lla{f\in \Dom (\pbb)\cap \Dom(\pbba): \pbb f=0,\pb_{b,t}^*f=0} \\
&=\lla{f\in \Dom (\pbb)\cap \Dom(\pbba): Q_{b,t}(f,f)=0}.
\end{align*}
}

We have the following relationship between the energy forms. See \cite[Lemma 3.4]{HaRa11} or \cite[Lemma 4.9]{Nic06}.
\begin{lemma}\label{lemma:nic2}
If $ f $ is a smooth (0,q)-form on $ M $, then there exist constants $ K,K_{{t}} $ and $ K' $ with $ K\geq 1 $ so that
\begin{eqnarray*}
KQ_{b,{t}}(f,f)&+&K_{{t}}\sum_{\nu}\norm{\tilde{\zeta}_{{\mu}}\tilde{\Psi}_{{\mu},t}^0\zeta_{{\mu}} f^{{\mu}}}_0^2+K'\normm{f}_{{t}}^2+O_t(\norm{f}_{-1}^2) \\
&\geq& \sum_{{\mu}} \left[ Q_{b,+}(\tilde{\zeta}_{{\mu}}\Psi_{{\mu},t}^+\zeta_{{\mu}} f^{{\mu}}, \tilde{\zeta}_{{\mu}}\Psi_{{\mu},t}^+\zeta_{{\mu}} f^{{\mu}})   \right.\\
&&\left. Q_{b,0}(\tilde{\zeta}_{{\mu}}\Psi_{{\mu},t}^0\zeta_{{\mu}} f^{{\mu}}, \tilde{\zeta}_{{\mu}}\Psi_{{\mu},t}^0\zeta_{{\mu}} f^{{\mu}})+Q_{b,-}(\tilde{\zeta}_{{\mu}}\Psi_{{\mu},t}^-\zeta_{{\mu}} f^{{\mu}}, \tilde{\zeta}_{{\mu}}\Psi_{{\mu},t}^-\zeta_{{\mu}} f^{{\mu}}) \right]
\end{eqnarray*}

$ K $ and $ K' $ do not depend on $ t,\phi^- $ or $ \phi^+ $.
\end{lemma}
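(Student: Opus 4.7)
The plan is to expand each of the right-hand side energy terms $Q_{b,\bullet}(\tilde{\zeta}_\mu\Psi_{\mu,t}^\bullet\zeta_\mu f, \tilde{\zeta}_\mu\Psi_{\mu,t}^\bullet\zeta_\mu f)$ for $\bullet \in \{+,0,-\}$, move the pseudodifferential cutoffs past $\dbar_b$ and the corresponding adjoint by writing out commutators, and then reassemble using the partition-of-unity identities $\sum_\mu \zeta_\mu^2 = 1$ and $(\Psi_{\mu,t}^+)^*\Psi_{\mu,t}^+ + (\Psi_{\mu,t}^0)^*\Psi_{\mu,t}^0 + (\Psi_{\mu,t}^-)^*\Psi_{\mu,t}^- = I$. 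After resummation the ``main" pieces produce $Q_{b,t}(f,f)$ up to an overall constant that fixes $K$, while the commutator residues are placed into $K'\normm{f}_t^2$ and $O_t(\|f\|_{-1}^2)$.

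For the $\dbar_b$ half of each energy form, I would apply the identity
$$
\dbar_b\bigl(\tilde{\zeta}_\mu\Psi_{\mu,t}^\bullet\zeta_\mu f\bigr) = \tilde{\zeta}_\mu\Psi_{\mu,t}^\bullet\zeta_\mu \dbar_b f + \bigl[\dbar_b, \tilde{\zeta}_\mu\Psi_{\mu,t}^\bullet\zeta_\mu\bigr] f.
$$
Since $\dbar_b$ is first order and the cutoff is zero order, the commutator is order zero, so after squaring its contribution is bounded by $C\normm{f}_t^2 + O_t(\|f\|_{-1}^2)$. For the adjoint half I would invoke Lemma \ref{lemma:nic1}: solving that decomposition for the microlocalized pieces, one expresses $\pbb^{*,\bullet}(\tilde{\zeta}_\mu\Psi_{\mu,t}^\bullet\zeta_\mu f)$ as $\tilde{\zeta}_\mu\Psi_{\mu,t}^\bullet\zeta_\mu\,\pb_{b,t}^*f$ plus zeroth-order commutators and the error term $E_A$ whose symbol is supported in $\opC_\mu^0$. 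These contributions are again absorbed into $K'\normm{f}_t^2 + O_t(\|f\|_{-1}^2)$, noting that where $E_A$ is supported one may freely use the dominating cutoff $\tilde{\Psi}_{\mu,t}^0$.

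The main subtlety will be the $Q_{b,0}$ contribution. Because $\Psi^0$ is not microlocalized into a region where either weight $\phi^+$ or $\phi^-$ is favorable, the zeroth-order commutators arising from the rearrangement there may carry factors of $t$ (through $T$-derivative components produced when commuting with $\dbar_b$ and the adjoint). One therefore cannot absorb the $\Psi^0$ mass into $\normm{f}_t^2$ with a $t$-independent constant; instead it must be retained as the term $K_t\sum_\mu \|\tilde{\zeta}_\mu\tilde{\Psi}_{\mu,t}^0\zeta_\mu f\|_0^2$ on the left-hand side, where $\tilde{\Psi}^0_{\mu,t}$ dominates $\Psi^0_{\mu,t}$ so that all such error terms have symbols contained in its support. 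Choosing $K \geq 1$ large enough to absorb the overall constants produced by the resummation of the main terms, and $K'$, $K_t$ large enough to accommodate the commutator residues, yields the claimed inequality, with $K$ and $K'$ independent of $t,\phi^+,\phi^-$ because only the cutoffs (not the weights) enter those estimates.
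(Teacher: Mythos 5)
Your overall architecture (expand each microlocalized energy, commute the cutoffs past $\pbb$ and the adjoints, reassemble with $\sum_\mu\zeta_\mu^2=1$ and $(\Psi^+_{\mu,t})^*\Psi^+_{\mu,t}+(\Psi^0_{\mu,t})^*\Psi^0_{\mu,t}+(\Psi^-_{\mu,t})^*\Psi^-_{\mu,t}=I$, and sort the residues by microlocal support) is exactly the strategy behind the proof the paper invokes; the paper itself does not reprove the lemma but cites the computations of Lemmas 4.8--4.9 and (4.4) in Nicoara and Lemma 3.4 of Harrington--Raich, which proceed along these lines. So the plan is the right one.

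The genuine problem is your bookkeeping of where the $t$-dependence lives, which is the entire content of the statement ``$K$ and $K'$ do not depend on $t,\phi^+,\phi^-$.'' You attribute the need for the $K_t\sum_\mu\norm{\tilde\zeta_\mu\tilde\Psi^0_{\mu,t}\zeta_\mu f}_0^2$ term to the $Q_{b,0}$ piece, claiming its commutators ``may carry factors of $t$''; in fact $Q_{b,0}$ is the \emph{unweighted} energy, its commutators with the zero-order cutoffs are order zero with bounds uniform in $t$ (the $\xi$-derivatives of $\psi^\bullet_t$ even gain factors $1/(tA)$), so that piece is the harmless one. The $t$-dependent errors arise instead in the weighted pieces $Q_{b,\pm}$: the discrepancy between $\pb_{b,{t}}^*$ and $\pbb^{*,\pm}$ recorded in Lemma \ref{lemma:nic1} involves $\corc{\pbba,\phi^\pm}$, which is zeroth order but of size $O(t)$ since $\phi^\pm=\pm t|z|^2$, together with $E_A$, whose symbol bounds also depend on $t$; moreover estimating weighted norms of errors microlocalized in the transition region forces a factor like $e^{Ct}$ when converting to the unweighted norm. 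These terms therefore cannot be absorbed into $K'\normm{f}_{{t}}^2$ with $K'$ independent of $t$, as your proposal asserts; the reason the lemma is true is that their symbols are supported in $\opC^0_\mu$ (or pair operators with disjoint symbol supports, giving smoothing contributions), so they are controlled precisely by the $K_t\sum_\mu\norm{\tilde\zeta_\mu\tilde\Psi^0_{\mu,t}\zeta_\mu f}_0^2$ and $O_t(\norm{f}_{-1}^2)$ terms. You gesture at this by noting that the dominating cutoff $\tilde\Psi^0_{\mu,t}$ is available where $E_A$ is supported, but as written your sorting of the errors would put $t$-dependent quantities under the $t$-independent constant $K'$, which would invalidate the use of the lemma later when $t$ is taken large to absorb $\normm{f}_t^2$.
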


%
%
\section{The Basic Estimate}\label{sec: basic estimate}

In this section, we compile the technical pieces that will allows us to establish a basic estimate the ground level $L^2$ estimates
for Theorem \ref{thm:mainthm, Sobolev} in Section \ref{sec:main theorem, weighted}.
\begin{proposition}\label{prop:mainestimate}
Let $ M^{2n-1} \subset \C^N$ be a smooth, compact, orientable CR-manifold of hypersurface type that satisfies weak $Y(q)$ for some fixed
$ 1\leq q\leq n-2 $.  Set
\begin{align} \label{eqn:phi+, phi- defns}
\phi^+(z) &= 
\begin{cases}
t|z|^2      & \text{ if } \omega(\Upsilon_q)< q \\
-t|z|^2 &  \text{ if } \omega(\Upsilon_q)> q
\end{cases}
&\text{and}&  &
\phi^-(z) &=
\begin{cases}
-t|z|^2      & \text{ if } \omega(\Upsilon_{n-1-q})< n-1-q \\
t|z|^2 &  \text{ if } \omega(\Upsilon_{n-1-q})> n-1-q.
\end{cases}
\end{align}
There exist constants $ K $ and $ K_{{t}} $ where $ K $ does not depend on $t$ so that 
\begin{equation}\label{estimateconclusion}
t\normm{f}_{{t}}^2\leq KQ_{b,{t}}(f,f) +K_{{t}}\norm{f}_{-1}^2{,}
\end{equation}
for $ t $ sufficiently large.
\end{proposition}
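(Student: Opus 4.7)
The plan is to exploit the microlocal decomposition of Section \ref{sec:pseudo} to split $f$ into three pieces $f^+_\mu := \tilde\zeta_\mu \Psi^+_{\mu,t}\zeta_\mu f^\mu$, $f^-_\mu$, $f^0_\mu$, and establish the basic estimate on each piece separately against the appropriate local energy form, then reassemble via Lemma \ref{lemma:nic2}. The key obstruction in deriving a basic estimate for $\dbarb$ is that a Kohn--Morrey--H\"ormander type integration by parts produces a term of the form $\sum_{I,j,k} (c_{jk}Tg_{jI},g_{kI})$ involving the Levi form contracted with the bad direction $T$; the whole point of the microlocal calculus is that on $\opC^+$ the operator $T$ has principal symbol equivalent to $|\xi|$, on $\opC^-$ it is equivalent to $-|\xi|$, and on $\opC^0$ it is dominated by the elliptic tangential directions, so that the sign of the weight can be chosen to force positivity.

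\textbf{The $+$ piece.} For $g = f^+_\mu$, which is essentially microlocalized to $\opC^+$, I will first expand $Q_{b,+}(g,g)$ by writing out $\dbarb$ and $\dbarbs_{b}^{*,+}$ in the local frame and integrating by parts using \eqref{eqn:L, bar L commutator}. Following the standard calculation (cf.\ \cite{HaRa11,Nic06}), this yields
\begin{align*}
Q_{b,+}(g,g) &= \norm{\bna g}_{\phi^+}^2 + \sum_{I\in\I_{q-1}}\sum_{j,k=1}^{n-1}\pare{\bigl(c_{jk}T + \Theta^{\phi^+}_{jk}\bigr)g_{jI},g_{kI}}_{\phi^+} \\
&\quad + O\!\bigl(\normm{g}_{\phi^+}(\normm{\bna g}_{\phi^+}+\normm{g}_{\phi^+})\bigr) + \text{l.o.t.}
\end{align*}
I will then use the twisted trick of \cite{HaRa15}: write the Levi+Hessian term as
\[
\bigl(c_{jk} - \tfrac{1}{n-1}\omega_{jk}(\Upsilon_q)\text{correction}\bigr)T + \Theta^{\phi^+}_{jk}
\]
and invoke the combinatorial lemma relating $\sum_I(A_{jk}g_{jI},g_{kI})$ to $\Tr(A) \|g\|^2$ minus its action on the orthogonal complement. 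Under hypothesis (B) in Definition \ref{defn:weak Z(q)}, together with the fact that $\Theta^{\pm t|z|^2} = \pm t\,\omega$ and $\omega(\Upsilon_q)\ne q$, the trace of the relevant matrix is bounded below by a positive multiple of $t$ times $(q-\omega(\Upsilon_q))$ (or its negative), which is why the sign in \eqref{eqn:phi+, phi- defns} is chosen from the sign of $q-\omega(\Upsilon_q)$. The sharp G{\aa}rding inequality, applied on the cone $\opC^+_\mu$ where $\sigma(T)\sim |\xi|$, then converts the $T$--factor into $\Lambda$, giving
\[
t\normm{g}_{\phi^+}^2 \les Q_{b,+}(g,g) + C_t\|g\|_{-1}^2.
\]

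\textbf{The $-$ piece and the $0$ piece.} For $f^-_\mu$ the same argument runs, except that $T$ now has symbol $\sim -|\xi|$, so the role of the Levi form is reversed; this is precisely handled by the Hodge-$*$ style symmetry that converts $(0,q)$ estimates on $\opC^-$ into $(0,n-1-q)$ estimates, and weak $Z(n-1-q)$ (with $\Upsilon_{n-1-q}$) supplies the required positivity with the opposite sign of the weight, explaining the second half of \eqref{eqn:phi+, phi- defns}. For $f^0_\mu$, which is microlocalized to $\opC^0$, the operator $\dbarb\oplus\dbarbs$ is elliptic in the tangential directions, and a straightforward G{\aa}rding estimate yields $\|f^0_\mu\|_0^2 \les Q_{b,0}(f^0_\mu,f^0_\mu) + C_t\|f\|_{-1}^2$, which absorbs the $K_t\sum_\mu\|\tilde\zeta_\mu \tilde\Psi^0_{\mu,t}\zeta_\mu f^\mu\|_0^2$ term produced by Lemma \ref{lemma:nic2}.

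\textbf{Assembly.} Summing the three microlocal estimates, multiplying by $1/K$ and choosing $t$ so large that the $K'\normm{f}_t^2$ term from Lemma \ref{lemma:nic2} is absorbed into the left side, together with the norm equivalence \eqref{eq:normequiv}, yields \eqref{estimateconclusion}. \textbf{The main technical obstacle} is the careful bookkeeping for the sharp G{\aa}rding step: one must verify that after applying it inside the quadratic form $\sum_I(A_{jk}g_{jI},g_{kI})$, the matrix whose positivity is needed is precisely the one controlled by condition (B) of weak $Z(q)$, and that the commutator errors picked up when passing $\Psi^+_{\mu,t}$ and $\tilde\zeta_\mu$ across $\dbarb$, $\dbars_{b,t}$, and the multiplication by $e^{-\phi^+}$ are all lower order in the calculus with parameter $tA$, so that they are genuinely absorbable either into $C_t\|f\|_{-1}^2$ or, for the $O(tA)$--terms, into a small constant times $Q_{b,t}(f,f)$ on the right.
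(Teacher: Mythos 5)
Your proposal follows essentially the same route as the paper: the Kohn--Morrey--H\"ormander identity, the $\Upsilon$-twisting of \cite{HaRa15} (writing $\norm{\nabla_{\bar L}f}_\phi^2$ as $(\norm{\nabla_{\bar L}f}_\phi^2-\norm{\bna_\Upsilon f}_\phi^2)+\norm{\bna_\Upsilon f}_\phi^2$ and integrating by parts), Straube's eigenvalue lemma with condition (B) of weak $Z(q)$ resp.\ $Z(n-1-q)$, the sharp G{\aa}rding lemmas on $\opC^{\pm}$, elliptic control plus Proposition \ref{prop:Psi_0 terms} on $\opC^0$, and assembly through Lemma \ref{lemma:nic2} with $t$ large, which is exactly how the paper proceeds (via Propositions \ref{prop:estim+} and \ref{prop:estim-}). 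The only cosmetic inaccuracies are that the diagonal correction in the twisted matrix is $\delta_{jk}\,i\ip{d\gamma,\Upsilon}/q$ (and $\delta_{jk}\,i\ip{d\gamma,\Upsilon}/(n-1-q)$ on the minus piece) rather than a $1/(n-1)$ factor, and the paper handles the $\opC^-$ piece by a direct second integration-by-parts identity and the trace identity for $\frac{\Tr(H)}{q}Id-H$ rather than an explicit Hodge-$*$ argument.
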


The main work in establishing \eqref{estimateconclusion} is {to prove} the following:
\begin{equation}\label{eq:mainestimative}
t\normm{f}^2_{{t}} \leq KQ_{b,{t}}(f,f)+K\normm{f}_{{t}}^2+K_{{t}}\sum_{{\mu}}\sum_{J\in \I_q}\norm{\tilde{\zeta}_{{\mu}}  \tilde{\Psi}_{{\mu},t}^0\zeta_{{\mu}} f_J^{{\mu}}}_0^2 + K_{{t}} ' \norm{f}_{-1}^2.
\end{equation}

In order to prove \eqref{eq:mainestimative}, 
we {estimate} a $(0,q)$-form $f$ with support in neighborhood $U$ in a generic energy form
$Q_{b,\phi}(f,g) := (\dbarb f, \dbarb g)_\phi + (\dbarb^{*,\phi}f,\dbarb^{*,\phi}g)_\phi$. Throughout the estimate, we will make use of three 
terms, $E_0(f)$, $\tilde E_1(f)$, and $\tilde E_2(f)$ to collect the error terms that we will bound later. {We want}
$E_0(f) = O(\norm{f}_\phi^2)$ {and}
\begin{align*}
\tilde E_1(f) &=  \sum_{J,J'\in \I_q}\sum_{j=1}^{n-1} \pare{  \lb_j f_J, a_{JJ'} f_{J'}}_\phi
&\text{and}& &
\tilde E_2(f) &=\sum_{J,J'\in \I_q}\sum_{j=1}^{n-1} \pare{ \lba_j f_J, \tilde a_{JJ'} f_{J'} }_\phi
\end{align*}
for some collection of smooth functions $a_{JJ'}$ and $\tilde a_{JJ'}$ {that may change line to line}.

Integration by parts (see, e.g., \cite[Lemma 4.2]{Rai10}) shows that
\begin{align*}
Q_{b,\phi}(f,f)
&=\norm{\nabla_{\bar L}f}_\phi^2+\sum_{J,J'\in I_{q}}\sum_{\atopp{j,k=1}{j\neq k}}^{n-1}  \epsilon_{jJ'}^{kJ} \pare{ \corc{\lba_j,\lb_k}f_J,f_{J'} }_\phi \\
&+\sum_{J\in \I_q}\sum_{j\in J} \pare{\corc{\lb_j,\lba_j}f_J,f_J}_\phi  +2\Rre \pare{\tilde{E}_2(f)+\tilde{E}_1(f)}+E_0(f).
\end{align*}
{Developing} the commutator terms as in \cite[Lemma 4.2]{Rai10} and using the
fact that $L_j =-\lba_j+L_j\phi+\sigma_j $, we have the equality
\begin{align*}
Q_{b,\phi}(f,f)&=\norm{\nabla_{\bar L}f}_\phi^2 +\sum_{I\in \I_{q-1}}\sum_{j,k=1}^{n-1}\Rre\pare{c_{jk}T f_{jI},f_{kI} }_\phi \\
&+\Rre\sum_{I\in \I_{q-1}}\sum_{j,k=1}^{n-1}\corc{\big((\lb_k L_j\phi) f_{jJ},f_{kI}\big)_\phi +\pare{\sum_{l=1}^{n-1}d_{jk}^lL_l\phi f_{jI},f_{kI}  }_\phi  }\\
& +{\tilde{E}_1(f)+\tilde{E}_2(f)+E_0(f).}
\end{align*}

Since
\begin{align}
\Rre \sum_{I\in \I_{q-1}}\sum_{j,k=1}^{n-1} \pare{\lb_k L_j\phi f_{jJ},f_{kI}}_\phi&=\frac{1}{2} \sum_{I\in \I_{q-1}}\sum_{j,k=1}^{n-1} \pare{(\lb_k L_j\phi+L_j\lb_k\phi) f_{jJ},f_{kI}}_\phi\nn \\
\Rre \sum_{I\in \I_{q-1}}\sum_{j,k=1}^{n-1}\pare{\sum_{l=1}^{n-1}d_{jk}^lL_l\phi f_{jI},f_{kI}  }_\phi&=\frac{1}{2} \sum_{I\in \I_{q-1}}\sum_{j,k=1}^{n-1} \pare{\sum_{l=1}^{n-1}(d_{jk}^lL_l \phi+\bar{d}_{kj}^l\lb_l\phi) f_{jJ},f_{kI}}_\phi 
\label{eqn:real is sum of conjugates}
\end{align}
and
\[
\frac{1}{2}\pare{\lb_k L_j\phi+L_j\lb_k\phi}+\frac{1}{2}\sum_{l=1}^{n-1}(d_{jk}^lL_l \phi+\bar{d}_{kj}^l\lb_l\phi)= \Theta_{jk}^\phi-\frac{1}{2}\nu(\phi)c_{jk}
\]
it follows that
\begin{align}
Q_{b,\phi}(f,f)&=\norm{\nabla_{\bar L}f}_\phi^2 +\sum_{I\in \I_{q-1}}\sum_{j,k=1}^{n-1}\Rre\pare{c_{jk}T f_{jI},f_{kI} }_\phi \nonumber \\
&+\sum_{I\in \I_{q-1}}\sum_{j,k=1}^{n-1}\pare{ (\Theta_{jk}^\phi -\frac{1}{2}\nu(\phi)c_{jk})  f_{jI},f_{kI}}_\phi  \label{eq:MKH1} 
+\tilde{E}_1(f)+\tilde{E}_2(f)+E_0(f). 
\end{align}

On the other hand, integration by parts, expanding the commutator terms, and using \eqref{eqn:real is sum of conjugates}, we will have
\begin{align}
\norm{\bna_\Upsilon f}^2_\phi
&=\sum_{j,k=1}^{n-1}\corc{\pare{\bkj\lba_j f,\lba_k f}_\phi+ \pare{\corc{\lba_j,\lb_k} f,\bjk f}_\phi +\pare{\lba_j(\bkj)\lb_k f,f}_\phi}\nonumber\\
&+\sum_{j,k=1}^{n-1}\pare{\lba_j f,\lba_k(\bjk)f}_\phi\nonumber\\
&= \| \nabla_\Upsilon f \|_\phi^2
-\sum_{j,k=1}^{n-1}\corc{\pare{\bkj c_{jk}Tf,f}_\phi+\pare{\bkj(\Theta_{jk}^\phi-\frac{1}{2}\nu(\phi)c_{jk} )f,f}_\phi}  \label{eq:DWIBP1}\\
& {+\tilde{E}_2(f)+\tilde{E}_1(f)+E_0(f).} \nn
\end{align}
Motivated by \cite[p.1725]{HaRa15}, we write 
$
\norm{\nabla_{\bar L} f}_\phi^2=\pare{\norm{\nabla_{\bar L} f}_\phi^2-\norm{\bna_\Upsilon f}_\phi^2}+\norm{\bna_\Upsilon f}_\phi^2
$ 
and {use \eqref{eq:DWIBP1} to obtain}
\begin{align*}
Q_{b,\phi}(f,f)
&=\pare{\norm{\nabla_{\bar L}f}_\phi^2-\norm{\bna_\Upsilon f}_\phi^2 }+\norm{\nabla_\Upsilon f}_\phi^2+\sum_{I\in \I_{q-1}}\sum_{j,k=1}^{n-1}\Rre\pare{c_{jk}T f_{jI},f_{kI} }_\phi\\
&-\pare{ i\intprod{d\gamma}{\Upsilon}Tf,f}_\phi+\sum_{I\in \I_{q-1}}\sum_{j,k=1}^{n-1}\pare{ (\Theta_{jk}^\phi -\frac{1}{2}\nu(\phi)c_{jk})  f_{jJ},f_{kI}}_\phi\\
&-\pare{ i\intprod{ \Theta^\phi }{\Upsilon}f,f}_\phi+\pare{\frac{1}{2}\nu(\phi)i\intprod{d\gamma}{\Upsilon}f,f}_\phi
+\tilde{E}_1(f)+\tilde{E}_2(f)+E_0(f)
\end{align*}
Since
\[
\sum_{J\in \I_q}(a f_J,f_J)_\phi=\sum_{I\in \I_{q-1}}\sum_{j,k=1}^{n-1}\pare{\frac{a\delta_{jk}}{q}f_{jI},f_{kI} }_\phi
\]
where $ (\delta_{jk}) $ is the identity matrix $I_{n-1}$, we have
\begin{align*}
&Q_{b,\phi}(f,f)=\pare{\norm{\nabla_{\bar L}f}_\phi^2-\norm{\bna_\Upsilon f}_\phi^2 }+\norm{\nabla_\Upsilon f}_\phi^2\\
&+\sum_{I\in \I_{q-1}}\sum_{j,k=1}^{n-1}\Rre\pare{\pare{c_{jk}-\frac{i\intprod{d\gamma}{\Upsilon}\delta_{jk}}{q}}T f_{jI},f_{kI} }_\phi
+\sum_{I\in \I_{q-1}}\sum_{j,k=1}^{n-1}\pare{\pare{\Theta_{jk}^\phi-\frac{i\intprod{\Theta}{\Upsilon}\delta_{jk}}{q}} f_{jI},f_{kI} }_\phi\\
&-\sum_{I\in \I_{q-1}}\sum_{j,k=1}^{n-1}\pare{\frac{1}{2}\nu(\phi){\pare{c_{jk}-\frac{i\intprod{d\gamma}{\Upsilon}\delta_{jk}}{q}} }f_{jI},f_{kI} }_\phi
+\tilde{E}_1(f)+\tilde{E}_2(f)+E_0(f).
\end{align*}
Bounding the error terms $ \tilde{E}_1(f) $ and $\tilde{E}_2(f) $ uses the same argument, and we demonstrate the bound
for $\tilde{E}_1(f)$. 
Terms of the form $\sum_{j=1}^{n-1}\pare{a_j\lb_j g,h}_\phi $ comprise $\tilde E_1$ for various functions $g$ and $h$, and we compute
\begin{equation}\label{eqn:decomposing E_1}
\sum_{j=1}^{n-1}\pare{a_j\lb_j g,h}_\phi=\sum_{j,k=1}^{n-1}\pare{(\delta_{jk}-\bjk)\lb_j g,\bar{a}_kh}_\phi+\sum_{j,k=1}^{n-1}\pare{ \bjk \lb_j g,\bar{a}_k h }_\phi.
\end{equation}
To estimate the first terms, observe that for $\ve>0$, a small constant/large constant argument shows that
\[
\vab{ \sum_{j,k=1}^{n-1}\pare{(\delta_{jk}-\bjk)\lb_j g,\bar{a}_kh}_\phi }
\leq \ve\sum_{k=1}^{n-1}\Big\|\sum_{j=1}^{n-1}(\delta_{jk}-\bjk)\lb_j g \Big\|_\phi^2+ O_{\frac{1}{\ve}}(\norm{h}_\phi^2).
\]
Stepping away from the integration (momentarily), suppose that at some point in $U$, 
$ A $ is a unitary matrix that diagonalizes the hermitian matrix $\bar B = (b^{\bar jk})$ of $\Upsilon$ such that 
$ \bar{B}=A^*\Lambda A $, where $ \Lambda=\diag\lla{\lambda_1,\dots,\lambda_{n-1}} $ and  $ \lambda_1, \cdots , \lambda_{n-1} $ 
are the eigenvalues of $ \bar{B} $. Consider $[\bar L_j g]$ as a column vector with components $[\bar L_j g]_k$. 
Then since $(1-\lambda_j)^2 \leq (1-\lambda_j)$ for all $j$, 
\begin{multline*}
\sum_{k=1}^{n-1}\vab{\sum_{j=1}^{n-1}(\delta_{jk}-\bjk)(\lb_j g)}^2
=\vab{\corc{Id-B}\corc{\lb_j g}}^2 
=\sum_{j=1}^{n-1}(1-\lambda_j)^2\vab{ \corc{A \corc{\overline{ \bar{L}_jg }} }_j  }^2\\
\leq \sum_{j=1}^{n-1}(1-\lambda_j)\vab{ \corc{A \corc{\overline{ \bar{L}_jg }}}_j  }^2 
=\sum_{j=1}^{n-1}\vab{\lb_j g}^2-{\sum_{j,k=1}^{n-1}}\bkj \overline{\lb_j g}\lb_k g.
\end{multline*}
%
Returning to the integration, we now observe,
\[
\sum_{k=1}^{n-1}\Big\|\sum_{j=1}^{n-1}(\delta_{jk}-\bjk)\lb_jf\Big\|_\phi^2\leq\norm{\nabla_{\bar L} f}_\phi^2-\norm{\bna_\Upsilon f}_\phi.
\]
For the second term in (\ref{eqn:decomposing E_1}), a similar small constant/large constant argument shows
\[
\vab{\sum_{j,k}(a_kf,\bkj \lba_j g)_\phi}\leq O_{\frac{1}{\ve}}(\norm{f}_\phi^2)+\ve\sum_{k=1}^{n-1}\Big\|\sum_{j=1}^{n-1}\bkj \lba_j g  \Big\|_\phi^2,
\]
and linear algebra (as above) helps to establish
\[
\sum_{k=1}^{n-1}\Big\| \sum_{j=1}^{n-1} \bkj \lba_jg \Big\|_\phi^2\leq \sum_{j,k}\pare{\bkj\lba_j g,\lba_k g}_\phi=\norm{\nabla_\Upsilon g}_\phi^2.
\]
\\
Summarizing the above, for $ \ve $ sufficiently small {and $f$ supported in a small neighborhood}, we have
\begin{eqnarray}
	Q_{b,\phi}(f,f)&\geq&\sum_{I\in \I_{q-1}}\sum_{j,k=1}^{n-1}\Rre\pare{\pare{c_{jk}-\frac{i\intprod{d\gamma}{\Upsilon}\delta_{jk}}{q}}T f_{jI},f_{kI} }_\phi\nonumber \\
	&&+\sum_{I\in \I_{q-1}}\sum_{j,k=1}^{n-1}\pare{\pare{\Theta_{jk}^\phi-\frac{i\intprod{\Theta^{\phi}}{\Upsilon}\delta_{jk}}{q}} f_{jI},f_{kI} }_\phi \nonumber\\
	&&-\sum_{I\in \I_{q-1}}\sum_{j,k=1}^{n-1}\pare{\frac{1}{2}\nu(\phi)\pare{\pare{c_{jk}-\frac{i\intprod{d\gamma}{\Upsilon}\delta_{jk}}{q}} }f_{jI},f_{kI} }_\phi+ O(\norm{f}_\phi^2)  \label{eq:maininequality1}
\end{eqnarray}

{To handle the $T$ terms, we recall the following results.}
The first is a well-known multilinear algebra result that appears (among other places) in Straube \cite{Str10}:
\begin{lemma}\label{lemma:StraubeAlgebra}
	Let $B= \pare{b_{jk}}_{1\leq j,k\leq n-1} $ be a Hermitian matrix and $ 1\leq q\leq n-1$. The following are  equivalent:
	\begin{enumerate}[i.]
		\item If $ u\in \Lambda^{0,q} $, then $ \sum_{K\in \I_{q-1}}\sum_{j,k=1}^{n-1} b_{jk}u_{jK}\overline{u_{kK}}\geq M\vab{u}^2 $.\\
		\item The sum of any q eigenvalues of $ B $ is at least $ M $.
		\item $ \sum_{s=1}^q\sum_{j,k=1}^{n-1} b_{jk}t_j^s\overline{t_k^s}\geq M $ for any orthonormal vectors $ \lla{t^s}_{1\leq s\leq q} \subset \C^{n-1}$.
	\end{enumerate}
\end{lemma}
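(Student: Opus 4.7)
The plan is to prove the chain (i) $\Leftrightarrow$ (ii) $\Leftrightarrow$ (iii) by reducing everything to a diagonalization of $B$, and then invoking two standard observations: (a) the eigenvalues of the induced quadratic form on $\Lambda^{0,q}$ are exactly the $q$-fold sums of eigenvalues of $B$, and (b) the Ky Fan / Courant-Fischer min-max characterization of eigenvalue sums.

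\textbf{Step 1: Diagonalize.} Choose a unitary $A=(a_{jk})$ with $A B A^{*}=\Lambda:=\diag\{\lambda_1,\dots,\lambda_{n-1}\}$, where $\lambda_1\le\cdots\le\lambda_{n-1}$ are the eigenvalues of $B$. Extend $A$ to the $q$-th exterior power: if $u=\sum_{J\in\I_q}u_J\bar\omega^J$, define $v=\sum_{J\in\I_q}v_J\bar\omega^J$ by applying $A$ component-wise and antisymmetrizing (so that on ordered multi-indices, $v_J=\sum_{J'\in\I_q}\det(A_{J,J'})u_{J'}$). This induced map is unitary on $\Lambda^{0,q}$, so $|u|^2=|v|^2=\sum_{J\in\I_q}|v_J|^2$.

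\textbf{Step 2: Rewrite (i) in the diagonal basis.} Using \eqref{eq:subindex} and the antisymmetry convention, a direct computation gives
\[
\sum_{K\in\I_{q-1}}\sum_{j,k=1}^{n-1}b_{jk}u_{jK}\overline{u_{kK}}=\sum_{K\in\I_{q-1}}\sum_{j=1}^{n-1}\lambda_j|v_{jK}|^2.
\]
Since $\sum_{K\in\I_{q-1}}|v_{jK}|^2=\sum_{J\in\I_q,\,J\ni j}|v_J|^2$, the right side equals $\sum_{J\in\I_q}\bigl(\sum_{j\in J}\lambda_j\bigr)|v_J|^2$. Thus (i) is equivalent to $\sum_{j\in J}\lambda_j\ge M$ for every $J\in\I_q$. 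The minimum of these sums is attained at $J=\{1,\dots,q\}$, giving $\lambda_1+\cdots+\lambda_q\ge M$; conversely, this lower bound implies the inequality for every $J$. This is exactly statement (ii), so (i) $\Leftrightarrow$ (ii).

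\textbf{Step 3: (ii) $\Leftrightarrow$ (iii) via Ky Fan.} By the Courant-Fischer / Ky Fan min-max principle for Hermitian matrices,
\[
\lambda_1+\cdots+\lambda_q=\min\Bigl\{\sum_{s=1}^q\langle Bt^s,t^s\rangle:\{t^s\}_{s=1}^q\subset\C^{n-1}\text{ orthonormal}\Bigr\}.
\]
Hence (iii) asserts precisely that this minimum is at least $M$, which is (ii). The infimum is achieved by taking $\{t^s\}$ to be eigenvectors of $B$ corresponding to $\lambda_1,\dots,\lambda_q$.

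There is no real obstacle here: the result is purely linear-algebraic and standard. The only mildly delicate point is the bookkeeping in Step 2, where one must verify that the unitary extension of $A$ to $\Lambda^{0,q}$ preserves the norm and converts the quadratic form to its diagonal shape; this is a consequence of the functorial behavior of $A\mapsto \Lambda^q A$ and the identity $\sum_{K\in\I_{q-1}}|v_{jK}|^2=\sum_{J\ni j}|v_J|^2$.
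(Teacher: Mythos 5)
Your proof is correct. Note that the paper itself does not prove this lemma at all: it is quoted as a known multilinear-algebra fact with a citation to Straube, so there is no in-paper argument to compare against; your diagonalization of $B$, passage to the induced unitary on $\Lambda^{0,q}$ (giving the eigenvalues $\sum_{j\in J}\lambda_j$ for the form in (i)), and the Ky Fan/Courant--Fischer minimum principle for (ii)$\Leftrightarrow$(iii) is exactly the standard route one finds in the cited source. The only points worth a sanity check are bookkeeping ones you already flag: that $\sum_{K\in\I_{q-1}}|v_{jK}|^2=\sum_{J\ni j}|v_J|^2$ under the convention \eqref{eq:subindex}, and that the indexing in (iii) gives the quadratic form of $B^{T}=\bar B$ rather than $B$, which is harmless since a Hermitian matrix and its transpose have the same (real) eigenvalues.
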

The next two results  are consequences of the sharp G{\aa}rding Inequality
and appear as \cite[Lemma 4.6, Lemma 4.7]{Rai10}.
\begin{lemma}\label{prop:Rai10badT}
	Let $ f $ a (0,q)-form supported on $ U $ so that up to a smooth term $ \hat{f} $ is supported in $ \opC^+ $, and let $ \corc{h_{jk}} $ a Hermitian matrix such that the sum of any q eigenvalues is $ \geq 0 $. Then
\begin{eqnarray*}
\Rre \Big\{\sum_{I\in \I_{q-1}}\sum_{j,k=1}^{n-1}\pare{h_{jk}T f_{jI},f_{kI}}_\phi \Big\}
\geq tA \Rre \sum_{I\in \I_{q-1}}\sum_{j,k=1}^{n-1}\pare{h_{jk}f_{jI},f_{kI}}_\phi -O(\norm{f}_\phi^2)-O_t(\norm{\tilde\zeta \widetilde\Psi_t^0f}_0^2).
\end{eqnarray*}
\end{lemma}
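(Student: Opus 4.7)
The plan is to apply the matrix-valued sharp G{\aa}rding inequality to the first-order operator whose principal symbol is the product of the $T$-symbol and the Hermitian endomorphism $(h_{jk})$ acting on $(0,q)$-forms. The key facts that make this work are: on the microlocal support of $\hat f$ the $T$-symbol is bounded below (in the appropriate self-adjoint sense) by $c\,tA$, and Lemma \ref{lemma:StraubeAlgebra} translates the eigenvalue-sum hypothesis on $(h_{jk})$ into the pointwise statement that $(h_{jk})$ is a nonnegative Hermitian form on $(0,q)$-forms. Multiplying these two positivities produces a pointwise nonnegative matrix-valued symbol to which sharp G{\aa}rding applies.

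First I would reduce to forms that are microlocally in the interior of $\opC^+$. Since $\hat f$ is supported in $\opC^+$ modulo a smoothing error and $\tilde\Psi_t^+$ has symbol identically $1$ on $\lla{\xi_{2n-1}\geq\tfrac34|\xi'|,\ |\xi|\geq tA}$, I write
\begin{equation*}
f=\tilde\Psi_t^+ f+\tilde\Psi_t^0 f+Rf,
\end{equation*}
where $R$ is smoothing. The $Rf$ contribution to $\Rre(h_{jk}Tf_{jI},f_{kI})_\phi$ is $O(\norm{f}_\phi^2)$ after one integration by parts against $T$, and the $\tilde\Psi_t^0 f$ contribution is absorbed in the remainder $O_t(\norm{\tilde\zeta\,\tilde\Psi_t^0 f}_0^2)$ listed in the statement. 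From this point on, I treat $f$ as if it were $\tilde\Psi_t^+ f$.

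Next, by Lemma \ref{lemma:StraubeAlgebra} the eigenvalue hypothesis on $(h_{jk})$ is equivalent to the pointwise bound $\sum_{I\in\I_{q-1}}\sum_{j,k=1}^{n-1} h_{jk}(x)\,u_{jI}\,\overline{u_{kI}}\ge 0$ for every $(0,q)$-form $u$. On the support of $\sigma(\tilde\Psi_t^+)$ we have $\xi_{2n-1}\ge\tfrac12|\xi'|$ and $|\xi|\ge tA$, so the self-adjoint part of $T$ has principal symbol $\ge c\,tA$ there for a fixed $c>0$. Consequently the matrix-valued symbol
\begin{equation*}
\bigl(h_{jk}(x)\bigl(\sigma_1(T)(x,\xi)-tA\bigr)\bigr)_{j,k}
\end{equation*}
is a nonnegative Hermitian form on $(0,q)$-forms on the microlocal support of $\tilde\Psi_t^+ f$. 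The matrix sharp G{\aa}rding inequality then yields
\begin{equation*}
\Rre\sum_{I\in\I_{q-1}}\sum_{j,k=1}^{n-1}\bigl(h_{jk}(T-tA)\tilde\Psi_t^+ f_{jI},\tilde\Psi_t^+ f_{kI}\bigr)_\phi\ \ge\ -C\norm{f}_\phi^2,
\end{equation*}
with $C$ independent of $t$ thanks to the uniform symbol estimates $D_\xi^\alpha\sigma(\Psi_t^+)=t^{-|\alpha|}q_\alpha(x,\xi)$ recorded in Section \ref{subsec:PseuO}. Commuting $\tilde\Psi_t^+$ past the smooth factors $h_{jk}$ (each commutator is order $-1$ with $t$-uniform bounds) and reassembling $f$ gives the claimed inequality.

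The main obstacle is keeping the G{\aa}rding remainder uniform in $t$: every step of the symbol calculus performed on $\Psi_t^\pm$ has to be checked against the scaled symbols $\psi_t^\pm(\xi)=\psi^\pm(\xi/(tA))$ so that the remainder constants do not absorb a factor of $t$. This is precisely the role played by the uniform derivative bounds on $\sigma(\Psi_t^\pm)$. A secondary bookkeeping point is the transition region where $\supp\psi_t^0$ overlaps $\opC^+$; this is the reason the final bound carries both a $t$-uniform $O(\norm{f}_\phi^2)$ term and the separate $O_t(\norm{\tilde\zeta\,\tilde\Psi_t^0 f}_0^2)$ term, rather than a single error.
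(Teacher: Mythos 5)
Your proposal follows exactly the route the paper intends: the paper does not prove this lemma itself but cites it as \cite[Lemmas 4.6 and 4.7]{Rai10}, ``consequences of the sharp G{\aa}rding Inequality,'' and your argument---microlocalize to the scaled cone, use Lemma \ref{lemma:StraubeAlgebra} to convert the eigenvalue-sum hypothesis into pointwise nonnegativity on $(0,q)$-forms, then apply the matrix-valued sharp G{\aa}rding inequality to the first-order symbol $h_{jk}\,(\sigma(T)-tA)$ with $t$-uniform remainders coming from the scaled symbols $\psi^{\pm}_t$---is precisely that proof. The one point to tighten (which you partly acknowledge) is that sharp G{\aa}rding needs nonnegativity of the symbol for \emph{all} $\xi$, so the cutoff $(\tilde\psi^+_t)^2$ must be built into the symbol, and on its support one only gets $\xi_{2n-1}\geq c\,tA$ with $c<1$ rather than $\xi_{2n-1}\geq tA$; hence the region $\xi_{2n-1}<tA$ must be shunted into the $O_t(\norm{\tilde\zeta\,\tilde\Psi^0_t f}_0^2)$ error (or the constant absorbed by the free parameter $A$), exactly as in \cite{Rai10}.
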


\begin{lemma}
	Let $ f $ a (0,q)-form supported on $ U $ so that up to a smooth term $ \hat{f} $ is supported in $ \opC^- $, and let $ \corc{h_{jk}} $ a Hermitian matrix such that the sum of any n-1-q eigenvalues is $ \geq 0 $. Then
\begin{multline*}
\Rre \lla{\sum_{J\in \I_{q}}\sum_{j=1}^{n-1}\pare{h_{jj}(-T) f_{J},f_{J}}_\phi - \sum_{I\in \I_{q-1}}\sum_{j,k}\pare{h_{jk}(-T) f_{jI},f_{kI}}_\phi } \\
\geq tA \Rre \lla{\sum_{J\in\I_{q}}\sum_{j=1}^{n-1}\pare{h_{jj} f_{J},f_{J}}_\phi - \sum_{I\in \I_{q-1}}\sum_{j,k}\pare{h_{jk} f_{jI},f_{kI}}_\phi } 
-O(\norm{f}_\phi^2)-O_t(\norm{\tilde\zeta \tilde\Psi_t^0f}_0^2).
\end{multline*}
\end{lemma}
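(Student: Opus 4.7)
The plan is to mirror the proof of the preceding lemma (Lemma \ref{prop:Rai10badT}) by swapping the cone $\opC^+\leftrightarrow\opC^-$ and replacing the direct eigenvalue-sum condition by its Hodge-dual formulation. That is, the ``trace minus $q$-form quadratic'' expression on the left of the claim is positive precisely when $[h_{jk}]$ has the dual eigenvalue condition, and $-T$ has the favorable sign on $\opC^-$; once these two positivities are combined, the matrix-valued sharp G{\aa}rding inequality of \cite{Nic06,Rai10} yields the estimate.

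The first step is purely algebraic. Diagonalizing $[h_{jk}]$ at a point with eigenvalues $\lambda_1,\dots,\lambda_{n-1}$ in an orthonormal $(0,1)$-frame and using $f_{jI}=\epsilon_J^{jI}f_J$, a direct computation shows
\[
H(f,f) := \sum_{J\in\I_q}\sum_{j=1}^{n-1}h_{jj}\vab{f_J}^2 - \sum_{I\in\I_{q-1}}\sum_{j,k=1}^{n-1}h_{jk}f_{jI}\overline{f_{kI}}
= \sum_{J\in\I_q}\Bigl(\sum_{j\notin J}\lambda_j\Bigr)\vab{f_J}^2.
\]
Hence $H(f,f)\geq 0$ pointwise on $\Lambda^{0,q}(M)$ if and only if the sum of any $n-1-q$ eigenvalues of $[h_{jk}]$ is nonnegative; this is the complement version of Lemma \ref{lemma:StraubeAlgebra} applied to the index set $J^c$ of size $n-1-q$.

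The second step invokes the matrix-valued sharp G{\aa}rding inequality in the microlocal form used in \cite{Nic06,Rai10}. On $\supp\psi_t^-$, one has $\xi_{2n-1}\leq -\tfrac12\vab{\xi'}$ and $\vab{\xi}\geq tA$, so the principal symbol of $-T$ is bounded below by $c\,tA$ for some $c>0$. Combined with the positivity from step one, the Hermitian matrix-valued symbol $\bigl(\sigma(-T)-tA\bigr)\cdot H$ is positive semidefinite on $\supp\psi_t^-$, exactly the input sharp G{\aa}rding needs. Applying it and integrating by parts as in the proof of Lemma \ref{prop:Rai10badT} produces
\[
\Rre\bigl\langle (-T)f,Hf\bigr\rangle_\phi \geq tA\,\Rre\bigl\langle f,Hf\bigr\rangle_\phi - O(\norm{f}_\phi^2) - O_t(\norm{\tilde\zeta\tilde\Psi_t^0 f}_0^2),
\]
where the first error absorbs the order-zero commutators arising from passing $-T$ past the cut-offs $\tilde\zeta\Psi_{\mu,t}^-\zeta$ and past $e^{-\phi}$ (these are $t$-independent at top order), and the second error captures the frequency content of $f$ lying outside $\opC^-$. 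Expanding $H$ on both sides is exactly the claimed inequality.

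The main technical obstacle is the matrix/bundle structure: sharp G{\aa}rding must be applied to a Hermitian form on $\Lambda^{0,q}$, not to a scalar symbol, and it is precisely the algebraic identity of step one that supplies the necessary pointwise positivity of the symbol. A secondary concern is that the $tA$ prefactor has to emerge cleanly from the cone geometry rather than be eaten by the cut-off commutators; this is handled by subtracting off the ``ceiling'' $tA\cdot H$ \emph{before} invoking the inequality, just as in \cite{Rai10}. The treatment of the $\opC^0$-components of $f$ is identical to that in Lemma \ref{prop:Rai10badT} and accounts for the $O_t(\norm{\tilde\zeta\tilde\Psi_t^0 f}_0^2)$ remainder.
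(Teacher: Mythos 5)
Your proposal is correct and matches the approach of the source: the paper does not reprove this lemma but cites it as \cite[Lemma~4.7]{Rai10}, where exactly your two steps appear --- the multilinear identity reducing the trace-minus-block quadratic to $\sum_{J}\big(\sum_{j\notin J}\lambda_j\big)|f_J|^2$ (which is the complementary version of Lemma~\ref{lemma:StraubeAlgebra}), followed by a matrix-valued sharp G{\aa}rding estimate using that $\sigma(-T)\gtrsim tA$ on $\supp\psi_t^-$, with the $\opC^0$ leakage absorbed into the $O_t(\|\tilde\zeta\tilde\Psi_t^0 f\|_0^2)$ term. No gaps.
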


Now, we are ready to estimate $ Q_{b,+}(\cdot,\cdot) $ and $ Q_{b,-}(\cdot,\cdot) $. 
\begin{proposition}\label{prop:estim+}
Let $ f \in \Dom \pbb\cap \Dom \pbba$ be a $(0,q)$-form supported in $ U $
and let $\phi$ be as in (\ref{eqn:phi+, phi- defns}). Then there exists a constant $C$  so that
\begin{equation*}
Q_{b,+}\pare{\tilde\zeta\Psi_{t}^+f, \tilde\zeta\Psi_{t}^+f }+C\norm{\tilde\zeta\Psi_{t}^+f}_{\phi^+}+O_t(\norm{\tilde\zeta\tilde{\Psi}_{t}^0f}_0^2)
\geq tB_{\phi^+}\norm{\tilde\zeta\Psi_{t}^+f}^2_{\phi^+}.
\end{equation*}
\end{proposition}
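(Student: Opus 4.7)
The plan is to specialize the inequality \eqref{eq:maininequality1} with $\phi=\phi^+$ to the microlocally truncated form $g:=\tilde\zeta\Psi_t^+ f$, which is supported in $U$ and whose wavefront set lies, modulo smoothing, in $\opC^+$. The right-hand side of \eqref{eq:maininequality1} then splits into three principal pieces: a $T$-term built from the modified Levi matrix
\[
h_{jk}:=c_{jk}-\frac{i\langle d\gamma,\Upsilon_q\rangle}{q}\delta_{jk},
\]
a $\Theta^{\phi^+}$-term built from $\Theta^{\phi^+}_{jk}-\frac{i\langle\Theta^{\phi^+},\Upsilon_q\rangle}{q}\delta_{jk}$, and a $\nu(\phi^+)$-correction again involving $h_{jk}$.

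For the $T$-term, weak $Z(q)$ condition (B) together with Lemma \ref{lemma:StraubeAlgebra} implies that the sum of any $q$ eigenvalues of $[h_{jk}]$ is nonnegative, so for every $(0,q)$-form $u$ supported in $U$,
\[
\sum_{I\in\I_{q-1}}\sum_{j,k=1}^{n-1} h_{jk}u_{jI}\overline{u_{kI}}\geq 0.
\]
Since $g$ is microlocalized to $\opC^+$, Lemma \ref{prop:Rai10badT} applies and yields
\[
\Rre\sum_{I,j,k}\pare{h_{jk}Tg_{jI},g_{kI}}_{\phi^+}\geq tA\,\Rre\sum_{I,j,k}\pare{h_{jk}g_{jI},g_{kI}}_{\phi^+}-O(\norm{g}_{\phi^+}^2)-O_t(\norm{\tilde\zeta\tilde\Psi_t^0 f}_0^2).
\]

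Next I would compute the $\Theta^{\phi^+}$-term directly. Since $\Theta^{\pm t|z|^2}=\pm t\omega$ and $\omega_{jk}=\delta_{jk}$ in the orthonormal frame, its contribution equals $\pm t(q-\omega(\Upsilon_q))\norm{g}_{\phi^+}^2$. Condition (C) of weak $Z(q)$ guarantees $\omega(\Upsilon_q)\neq q$, and the piecewise definition \eqref{eqn:phi+, phi- defns} is precisely engineered so that this coefficient is strictly positive; I set $B_{\phi^+}:=|q-\omega(\Upsilon_q)|>0$.

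The main obstacle is the $\nu(\phi^+)$-correction, because $\nu(\phi^+)=\pm t\,\nu(|z|^2)$ grows linearly in $t$ and so cannot simply be absorbed into a bounded error. The rescue has two ingredients. First, the coefficient matrix appearing in the $\nu$-correction is exactly the same $h_{jk}$ that controls the G{\aa}rding gain, so the $\nu$-contribution is a bounded multiple of the same nonnegative quantity $\sum h_{jk}g_{jI}\overline{g_{kI}}$ with opposite sign. Second, $A$ is a free constant in the definition of $\Psi_t^+$ that I am free to enlarge; selecting $A>\tfrac{1}{2}\norm{\nu(|z|^2)}_{L^\infty(M)}$ makes the combined $T$-plus-$\nu$-correction contribution pointwise nonnegative. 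What survives on the right is then the $\Theta^{\phi^+}$-term $tB_{\phi^+}\norm{g}_{\phi^+}^2$, modulo $O(\norm{g}_{\phi^+}^2)$ integration-by-parts errors (absorbed into $C\norm{g}_{\phi^+}^2$) and $O_t(\norm{\tilde\zeta\tilde\Psi_t^0 f}_0^2)$ errors from sharp G{\aa}rding, delivering the claimed inequality.
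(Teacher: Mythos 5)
Your proposal is correct and follows essentially the same route as the paper: apply \eqref{eq:maininequality1} to $\tilde\zeta\Psi_t^+f$, invoke Lemma \ref{prop:Rai10badT} via the eigenvalue condition coming from weak $Z(q)$ and Lemma \ref{lemma:StraubeAlgebra}, absorb the $\nu(\phi^+)$-correction by taking $A\geq \sup_M\tfrac12|\nu(|z|^2)|$, and extract the positive $\Theta^{\phi^+}$-term from the sign choice in \eqref{eqn:phi+, phi- defns}. The only cosmetic difference is that $B_{\phi^+}$ should be a constant bounded by $\inf_M|q-\omega(\Upsilon_q)|$ (positive by compactness and condition (C)) rather than the function $|q-\omega(\Upsilon_q)|$ itself.
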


\begin{proof}
By  \eqref{eq:maininequality1}, the fact that  the Fourier transform of {$\tilde\zeta \Psi_t^+f $}
is supported in $\opC^+$ up to smooth term, and Proposition \ref{prop:Rai10badT}, we  have
\begin{align*}
&Q_{b,+}(\tilde\zeta\Psi_t^+f,\tilde\zeta\Psi_t^+f)
\geq tA\sum_{I\in \I_{q-1}}\sum_{j,k=1}^{n-1}\Rre\pare{\pare{c_{jk}-\frac{i\intprod{d\gamma}{\Upsilon}\delta_{jk}}{q}} \tilde\zeta\Psi_t^+f_{jI},\tilde\zeta\Psi_t^+f_{kI} }_{\phi^+}\nonumber \\
&+\sum_{I\in \I_{q-1}}\sum_{j,k=1}^{n-1}\pare{\pare{\Theta_{jk}^{\phi+}-\frac{i\intprod{\Theta^{\phi+}}{\Upsilon}\delta_{jk}}{q}} \tilde\zeta\Psi_t^+f_{jI},\tilde\zeta\Psi_t^+f_{kI} }_{\phi^+} \nonumber\\
&-\sum_{I\in \I_{q-1}}\sum_{j,k=1}^{n-1}\pare{\frac{1}{2}\nu(\phi^+)\pare{\pare{c_{jk}-\frac{i\intprod{d\gamma}{\Upsilon}\delta_{jk}}{q}} }\tilde\zeta\Psi_t^+f_{jI},\tilde\zeta\Psi_t^+f_{kI} }_{\phi^+}\\
& - O(\norm{\tilde\zeta\Psi_t^+f}_{\phi^+}^2)-O_t(\norm{\tilde\zeta \tilde{\Psi}_t^0f}_0^2)
\end{align*}
By choosing $ A\geq {\sup_{z \in M}}\frac{1}{2}\vab{\nu(\vab{z}^2)} $, Lemma \ref{lemma:StraubeAlgebra} implies that
\[
Q_{b,+}(\tilde\zeta\Psi_t^+f,\tilde\zeta\Psi_t^+f) + C\norm{\tilde\zeta\Psi_t^+f}_{\phi^+}^2+O_t(\norm{\tilde\zeta \tilde{\Psi}_{{t}}^0f}_0^2)\geq t{B_{\phi^+}}\norm{\tilde\zeta\Psi_t^+f}_{\phi^+}^2
\]
for some constants $ C $ and $ B_{\phi^+} $ {where $B_{\phi^+}$ satisfies $ \vab{q-\omega(\Upsilon)}>B_{\phi^+} $ on $ M $}
\end{proof}


In order to estimate the terms $ Q_{b,-}(\tilde\zeta\Psi_t^-f,\tilde\zeta\Psi_t^-f) $ 
we have to modify the analysis slightly from the $Q_{b,+}$ case. Similarly to \eqref{eq:MKH1}, we have 
\begin{align}\label{eq:MKH2}
&Q_{b,\phi}(f,f)
=\norm{\nabla_{{\bar L^{*,\phi}}} f}_\phi^2 +\sum_{I\in \I_{q-1}}\sum_{j,k=1}^{n-1}\pare{c_{jk}T f_{jI},f_{kI} }_\phi
-\sum_{j=1}^{n-1}\pare{c_{jj}Tf,f}_\phi  \nn\\
&+\sum_{I\in \I_{q-1}}\sum_{j,k=1}^{n-1}\pare{ (\Theta_{jk}^\phi -\frac{1}{2}\nu(\phi)c_{jk})  f_{jI},f_{kI}}_{\phi}  
-\sum_{j=1}^{n-1}\pare{(\Theta_{jj}^\phi-\frac{1}{2}\nu(\phi)c_{jj} )f,f}_\phi \nonumber\\
& -O_\epsilon(\norm{\nabla_{{\bar L^{*,\phi}}} f}_\phi^2-\norm{\nabla_\Upsilon f}_\phi^2)
-O_{{\epsilon}}(\norm{\bna_\Upsilon f}_\phi^2)-O_{\frac{1}{\epsilon}}(\norm{f}_\phi^2)-O(\norm{f}_\phi^2).
\end{align}

Analogously to \eqref{eq:DWIBP1}, we have

\begin{eqnarray}\label{eq:DWIBP2}
\norm{{\nabla}_\Upsilon f}^2_\phi&=&\sum_{j,k=1}^{n-1}\corc{\pare{\bkj\lb_k f,\lb_j f}_\phi+ \pare{\bkj c_{jk}Tf,f}_\phi+\pare{\bkj(\Theta_{jk}^\phi-\frac{1}{2}\nu(\phi)c_{jk} )f,f}_\phi}\nonumber\\
&& -O_\epsilon(\norm{\nabla_{{\bar L^{*,\phi}}}f}_\phi^2-\norm{\nabla_\Upsilon f}_\phi^2)-O_\epsilon(\norm{\bna_\Upsilon f}_\phi^2)-O_{\frac{1}{\epsilon}}(\norm{f}_\phi^2)-O(\norm{f}_\phi^2).
\end{eqnarray}
It now follows from \eqref{eq:MKH2} and \eqref{eq:DWIBP2} that
\begin{align}
&Q_{b,\phi}(f,f)
\geq\sum_{I\in \I_{q-1}}\sum_{j,k=1}^{n-1}\Rre\pare{c_{jk}T f_{jI},f_{kI} }_\phi -\Rre\pare{\sum_{j=1}^{n-1}c_{jj}T f,f }_\phi\nonumber
 -O(\norm{f}_\phi^2) \\
&+\Rre\pare{i\intprod{d\gamma}{\Upsilon}T f,f }_\phi+\pare{i\intprod{\Theta^\phi}{\Upsilon}f,f} 
+\sum_{I\in \I_{q-1}}\sum_{j,k=1}^{n-1}\pare{\Theta_{jk}^\phi f_{jI},f_{kI} }_\phi -\pare{\sum_{j=1}^{n-1}\Theta_{jj}^\phi f,f}  \nonumber\\
&-\sum_{I\in \I_{q-1}}\sum_{j,k=1}^{n-1}\pare{\frac{1}{2}\nu(\phi)c_{jk}f_{jI},f_{kI} }_\phi +\pare{\frac{1}{2}\nu(\phi)\sum_{j=1}^{n-1}c_{jj}f,f}   
-\pare{\frac{1}{2}\nu(\phi)i\intprod{d\gamma}{\Upsilon}f,f}.  \label{eq:maininequality2}
\end{align}
If we set
\[
h_{jk}^-=c_{jk}-\delta_{jk}\frac{i\intprod{d\gamma}{\Upsilon}}{n-1-q},
\quad \text{and}\quad
h_{jk}^\Theta=\Theta_{jk}^\phi-\delta_{jk}\frac{i\intprod{\Theta^\phi}{\Upsilon}}{n-1-q}
\]
then we can rewrite \eqref{eq:maininequality2} by
\begin{align*}
&Q_{b,\phi}(f,f)\geq- \Rre\pare{\sum_{j=1}^{n-1}h_{jj}^- Tf,f }+\sum_{I\in \I_{q-1}}\sum_{j,k=1}^{n-1}\Rre\pare{h_{jk}^-Tf_{jI},f_{kI}}\nonumber \\
&-\pare{\sum_{j=1}^{n-1}h_{jj}^\Theta f,f }+\sum_{I\in \I_{q-1}}\sum_{j,k=1}^{n-1}\pare{h_{jk}^\Theta f_{jI},f_{kI}}\nonumber
+\pare{\frac{1}{2}\nu(\phi)\sum_{j=1}^{n-1}h_{jj}^- f,f }-\sum_{I\in \I_{q-1}}\sum_{j,k=1}^{n-1}\pare{\frac{1}{2}\nu(\phi)h_{jk}^-f_{jI},f_{kI}}\nonumber\\
&-O(\norm{f}_\phi^2)
\end{align*}
Since the sum of $q$ eigenvalues of the matrix $ \frac{Tr(H)}{q}Id - H  $ 
is equal to sum of $ (n-1-q) $ eigenvalues of the matrix $ H $, 
we may now proceed as in the proof of  \eqref{prop:estim+} to obtain the following proposition.
\begin{proposition}\label{prop:estim-}
Let $ f \in \Dom \pbb\cap \Dom \pbba$ be a $(0,q)$-form supported in $ U $
and let $\phi$ be as in (\ref{eqn:phi+, phi- defns}). Then there exists a constant $C$  so that
\begin{equation*}
Q_{b,-}\pare{\tilde\zeta\Psi_{t}^-f, \tilde\zeta\Psi_{t}^-f }+C\norm{\tilde\zeta\Psi_{t}^-f}_{\phi^-}
+O_t(\norm{\tilde\zeta\tilde{\Psi}_{t}^0f}_0^2)\geq t B_{\phi^-}\norm{\tilde\zeta\Psi_{t}^-f}^2_{\phi^-}
\end{equation*}
\end{proposition}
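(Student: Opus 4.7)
The proof will mirror that of Proposition \ref{prop:estim+} with \eqref{eq:maininequality2} playing the role of \eqref{eq:maininequality1} and the $\opC^-$ sharp G{\aa}rding lemma (the second lemma in the list preceding Proposition \ref{prop:estim+}) replacing Proposition \ref{prop:Rai10badT}. I would begin by applying \eqref{eq:maininequality2} with $\phi=\phi^-$ to $g := \tilde\zeta\Psi_{t}^-f$, whose Fourier transform is concentrated in $\opC^-$ up to a smoothing term.

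The crucial algebraic step is to rewrite the $T$-derivative terms of \eqref{eq:maininequality2} in the exact form required by the $\opC^-$ lemma. A direct computation using the identity $\sum_{j,I}(g_{jI}, g_{jI})_{\phi^-} = q(g, g)_{\phi^-}$ on $(0,q)$-forms shows that
\begin{align*}
&\sum_{I\in\I_{q-1}}\sum_{j,k=1}^{n-1}\Rre\pare{c_{jk}Tg_{jI}, g_{kI}}_{\phi^-} -\Rre\pare{\textstyle\sum_j c_{jj}Tg, g}_{\phi^-} +\Rre\pare{i\intprod{d\gamma}{\Upsilon_{n-1-q}}Tg, g}_{\phi^-}\\
&\quad = \Rre\corc{\textstyle\sum_{j=1}^{n-1}\pare{h^-_{jj}(-T)g, g}_{\phi^-} -\sum_{I\in\I_{q-1}}\sum_{j,k=1}^{n-1}\pare{h^-_{jk}(-T)g_{jI}, g_{kI}}_{\phi^-}}
\end{align*}
with $h^-_{jk} = c_{jk} - \delta_{jk}\frac{i\intprod{d\gamma}{\Upsilon_{n-1-q}}}{n-1-q}$. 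Because weak $Z(n-1-q)$ together with Lemma \ref{lemma:StraubeAlgebra} tells us that every sum of $n-1-q$ eigenvalues of $h^-$ is $\geq 0$, the $\opC^-$ lemma produces the lower bound $tA\,\Rre[\sum_j(h^-_{jj}g,g)_{\phi^-} - \sum_{I,j,k}(h^-_{jk}g_{jI},g_{kI})_{\phi^-}] - O(\norm{g}_{\phi^-}^2) - O_t(\norm{\tilde\zeta\tilde\Psi_{t}^0 f}_0^2)$. The bracketed static quantity is bounded below by the smallest sum of $n-1-q$ eigenvalues of $h^-$ times $\norm{g}_{\phi^-}^2$, which is nonnegative.

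The $\Theta^{\phi^-}$-terms of \eqref{eq:maininequality2} admit the analogous regrouping as $\sum_{I,j,k}(h^\Theta_{jk}g_{jI},g_{kI})_{\phi^-} - \sum_j(h^\Theta_{jj}g,g)_{\phi^-}$ with $h^\Theta_{jk} = \Theta^{\phi^-}_{jk} - \delta_{jk}\frac{i\intprod{\Theta^{\phi^-}}{\Upsilon_{n-1-q}}}{n-1-q}$. Since $\Theta^{\pm t|z|^2} = \pm t\omega$, the matrix $h^\Theta$ reduces to the scalar multiple $\pm t\frac{(n-1-q)-\omega(\Upsilon_{n-1-q})}{n-1-q}\,I$, and a direct computation using $\sum_j \delta_{jj} = n-1$ together with $\sum_{I,j,k}(\delta_{jk}g_{jI},g_{kI})_{\phi^-} = q\norm{g}_{\phi^-}^2$ yields a static contribution of $tB_{\phi^-}\norm{g}_{\phi^-}^2$ once the sign of $\phi^-$ is chosen as in \eqref{eqn:phi+, phi- defns}, where $B_{\phi^-}$ is a positive constant satisfying $\vab{(n-1-q)-\omega(\Upsilon_{n-1-q})} > B_{\phi^-}$ on $M$. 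The remaining $\nu(\phi^-)$-terms are handled as in the proof of Proposition \ref{prop:estim+}: by choosing $A \geq \sup_M \tfrac12\vab{\nu(\vab{z}^2)}$, they are dominated by the $tA$ factor generated by the $\opC^-$ lemma, and assembling the three contributions together with the $O(\norm{g}_{\phi^-}^2)$ error yields the claimed inequality.

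The main technical obstacle is the sign and coefficient bookkeeping in the rearrangement of the $T$-terms: the $(-T)$ convention built into the $\opC^-$ lemma, combined with the diagonal subtraction $-\Rre(\sum_j c_{jj}Tg, g)_{\phi^-}$ appearing in \eqref{eq:maininequality2}, forces the diagonal correction to the Levi matrix to be $\frac{i\intprod{d\gamma}{\Upsilon_{n-1-q}}}{n-1-q}$ rather than the $\frac{\cdot}{q}$ correction used in the proof of Proposition \ref{prop:estim+}. One must verify carefully that this is precisely the matrix whose $(n-1-q)$-eigenvalue sums are controlled by weak $Z(n-1-q)$; once this matching is in place, the rest of the proof proceeds in parallel to the $(+)$ case.
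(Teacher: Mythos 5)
Your proposal is correct and follows essentially the same route as the paper: starting from \eqref{eq:maininequality2}, regrouping the $T$- and $\Theta^{\phi}$-terms via $h^-_{jk}$ and $h^{\Theta}_{jk}$ with the $\tfrac{1}{n-1-q}$ diagonal correction, invoking the $\opC^-$ sharp G{\aa}rding lemma together with Lemma \ref{lemma:StraubeAlgebra} and the identity relating $q$-eigenvalue sums of $\tfrac{\Tr(H)}{q}Id-H$ to $(n-1-q)$-eigenvalue sums of $H$, and then absorbing the $\nu(\phi^-)$-terms by the choice of $A$ exactly as in Proposition \ref{prop:estim+}. The sign and coefficient bookkeeping you carry out is precisely the verification the paper leaves implicit when it says to proceed as in the $(+)$ case.
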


In contrast with the estimates in Lemmas \eqref{prop:estim+} and \eqref{prop:estim-} for forms supported on $ \opC^+ $ and $ \opC^- $ up to smooth terms, we have better estimates for forms supported on $ \opC^0 $ up to smooth terms. The next Lemma can be proved like using the same process done in Lemmas 4.17 and Lemma 4.18 on \cite{Nic06}.
\begin{lemma}\label{lem:forms in C_0}
	Let $ f $ be a (0,q)-form supported in $ U_{{\mu}} $ for some $ {\mu} $ such that up to smooth term, $ \hat{f} $ is supported in $ \widetilde{\opC}^0_{{\mu}} $.  There exist positive constants $ C>1 $ and $ \Gamma $ independent of $ t $ for which
\begin{equation}\label{eq:ellipest}
	CQ_{b,{t}}(f,E_{{t}} f)+\Gamma \norm{f}_0^2\geq \norm{f}_1^2
\end{equation}
\end{lemma}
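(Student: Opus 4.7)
The plan is to exploit the ellipticity of the Kohn Laplacian in the microlocal region $\widetilde{\opC}^0_\mu$: on this set the symbol of $T$ satisfies $|\xi_{2n-1}| \leq C|\xi'|$, so the complex tangential directions dominate all first-order derivatives. The argument is parallel to \cite[Lemmas 4.17--4.18]{Nic06}, and I would follow the steps below.

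\emph{First,} I would reduce the inequality to an unweighted statement. Since $\hat f$ is supported in $\widetilde{\opC}^0_\mu$ up to a smooth term, every microlocalization of $f$ by $\Psi^{\pm}_{\mu',t}$ is smoothing. Because the $\opC^0$-piece of $\normm{\cdot}_t$ carries the trivial weight $\phi \equiv 0$, this gives $\normm{f}_t^2 = \|f\|_0^2 + O_t(\|f\|_{-N}^2)$ for any $N$. Using the defining identity $(u, E_t v)_t = (u, v)_0$, the $L^2$-boundedness of $E_t$ independently of $t$, and the $L^2$-boundedness of the commutators $[\pbb, E_t]$ and $[\pb_{b,t}^*, E_t]$ (since $E_t$ is zero-order), together with Lemma~\ref{lemma:nic1} (which shows $\pb_{b,t}^* f = \pbba f + (\text{lower order})$ for such $f$), matters reduce to the unweighted estimate
\[
\|f\|_1^2 \leq C\, Q_{b,0}(f,f) + C'\|f\|_0^2.
\]

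\emph{Second,} I would apply the basic identity of this section with $\phi \equiv 0$ to obtain
\[
Q_{b,0}(f,f) \geq \|\nabla_{\bar L} f\|_0^2 + \sum_{I\in\I_{q-1}}\sum_{j,k}\Rre(c_{jk}Tf_{jI},f_{kI})_0 - C\|f\|_0^2.
\]
On the essential support of $\hat f$ one has $\|Tf\|_0 \leq C\sum_j(\|L_jf\|_0 + \|\bar L_j f\|_0) + C\|f\|_0$ (since $|\xi_{2n-1}| \leq C|\xi'|$ there), so the Levi-form $T$ terms are bounded by $\epsilon\|f\|_1^2 + C_\epsilon\|f\|_0^2$. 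Likewise, since $|\xi| \leq C|\xi'|$ on this region, I have the tangential ellipticity
\[
\|f\|_1^2 \leq C\sum_{j=1}^{n-1}\bigl(\|\bar L_j f\|_0^2 + \|L_j f\|_0^2\bigr) + C\|f\|_0^2.
\]
Integration by parts using $L_j = -\bar L_j^* + \sigma_j$ and the commutator $[\bar L_j, L_j]$ (equal to $-c_{jj}T$ modulo lower-order terms) yields $\|L_jf\|_0^2 \leq \|\bar L_j f\|_0^2 + C|(Tf,f)_0| + C\|f\|_0^2$, and combining with the $T$ bound gives $\|f\|_1^2 \leq C\|\nabla_{\bar L} f\|_0^2 + \epsilon\|f\|_1^2 + C_\epsilon\|f\|_0^2$. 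Absorbing $\epsilon\|f\|_1^2$ to the left and invoking the basic identity finishes the estimate.

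\emph{Main obstacle.} The conceptual content is only that $\Box_b$ is elliptic on $\widetilde{\opC}^0_\mu$. The real work is the bookkeeping in the first step: carefully tracking the discrepancy between $Q_{b,t}(f, E_t f)$ and $Q_{b,0}(f,f)$ through commutators with $E_t$ and the correction terms in Lemma~\ref{lemma:nic1}, and verifying that all such differences can be absorbed into $\|f\|_0^2$ errors (so that the constants $C$ and $\Gamma$ may be taken independent of $t$).
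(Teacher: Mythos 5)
Your overall strategy---reduce $Q_{b,t}(f,E_tf)$ to the unweighted energy and then exploit ellipticity of $\Box_b$ on the microlocal region $\widetilde{\opC}^0_\mu$---is the same route the paper takes, since the paper simply defers to Lemmas 4.17 and 4.18 of \cite{Nic06}. However, your execution of the reduction step has genuine gaps. First, it is not true that $\Psi^{\pm}_{\mu,t}f$ is smoothing when $\hat f$ is supported in $\widetilde{\opC}^0_\mu$: by construction $(\psi^+_t)^2+(\psi^-_t)^2+(\psi^0_t)^2=1$, so $\supp\psi^{\pm}_t$ overlaps $\widetilde{\opC}^0_\mu$ in the transition band where (after scaling by $tA$) $\tfrac12|\xi'|\le|\xi_{2n-1}|\le\tfrac34|\xi'|$. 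Hence the weighted $+$ and $-$ pieces of $\normm{f}_t$, of $E_t$, and of the correction terms in Lemma \ref{lemma:nic1} (which carry coefficients of size $t$) do not disappear, and your identity $\normm{f}_t^2=\norm{f}_0^2+O_t(\norm{f}_{-N}^2)$ is false. Second, uniform-in-$t$ $L^2$-boundedness of $[\pbb,E_t]$ and $[\pb_{b,t}^*,E_t]$ does not follow from ``$E_t$ is of order zero'': the symbol of $E_t$ involves $e^{\mp t|z|^2}$, and commuting with a first-order operator produces factors of size $t$. The mechanism that rescues this---and the reason the cutoffs are scaled at $tA$ in the first place---is that every such factor of $t$ appears composed with a symbol supported in $\{|\xi|\gtrsim tA\}$, so microlocally $t\lesssim \Lambda/A$ and the offending terms are first-order with small constant $1/A$, absorbable into $\norm{f}_1^2$ (equivalently into the gradient terms of the energy). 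This is the actual content of Nicoara's Lemma 4.18, and it is precisely the ``main obstacle'' you identify and then assert away.

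A second issue is in your elliptic step: $\widetilde{\opC}^0_\mu$ also contains the ball $\{|\xi|\lesssim tA\}$, where the inequality $|\xi_{2n-1}|\le C|\xi'|$ fails; on that piece one only has $\norm{Tf}_0\lesssim tA\norm{f}_0$, so the bound on the Levi-form $T$-terms and the tangential ellipticity $\norm{f}_1^2\lesssim \sum_j\big(\norm{L_jf}_0^2+\norm{\bar L_jf}_0^2\big)+\norm{f}_0^2$ hold there only with constants depending on $t$. This is harmless if the constant multiplying the $L^2$-error is allowed to depend on $t$ (as in the cited lemmas of \cite{Nic06}), but it contradicts your blanket claim of a $t$-independent frequency inequality on the essential support of $\hat f$, and it must be confronted explicitly if one wants the constant $\Gamma$ in \eqref{eq:ellipest} to be independent of $t$ as the statement asserts.
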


The other term appearing in our main estimate, $ O\big(\norm{\tilde{\zeta} \widetilde{\Psi}_{{t}}^0 \cdot }_0^2\big)$ can be handled 
with \cite[Proposition 4.11]{Rai10}.
\begin{proposition}\label{prop:Psi_0 terms}
For any $ \epsilon>0 $, there exists $ C_{\epsilon,{t}} >0$ so that
\[
\norm{\tilde{\zeta}\Psi_t^0\zeta\vp_0^2 }\leq \epsilon Q_{b,{t}}(\vp,\vp)+C_{\epsilon,{t}}\norm{\vp}^2_{-1}.
\] 
\end{proposition}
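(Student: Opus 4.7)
My plan is to prove this via the ellipticity of $\Box_{b,t}$ on the Fourier region $\opC^0$, combined with standard Sobolev interpolation. Set $f=\tilde{\zeta}\Psi_{t}^0\zeta\vp$ and observe that, up to a smoothing operator, $\hat f$ is supported in $\widetilde{\opC}^0$, which is precisely the region where Lemma~\ref{lem:forms in C_0} applies.

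First I would combine two standard inequalities. By the elementary Sobolev interpolation $\|f\|_0^2\leq \|f\|_1\|f\|_{-1}\leq \delta\|f\|_1^2+\frac{1}{4\delta}\|f\|_{-1}^2$, and since $\tilde\zeta\Psi_t^0\zeta$ is a zeroth-order pseudodifferential operator, $\|f\|_{-1}\leq C_t\|\vp\|_{-1}$. Next, Lemma~\ref{lem:forms in C_0} applied to $f$ gives $\|f\|_1^2\leq CQ_{b,t}(f,E_{t}f)+\Gamma\|f\|_0^2$. Choosing $\delta<(2\Gamma)^{-1}$ allows me to absorb the $\delta\Gamma\|f\|_0^2$ term into the left-hand side, yielding
\[
\|f\|_0^2\leq 2\delta C\,Q_{b,t}(f,E_t f)+C_{\delta,t}\|\vp\|_{-1}^2.
\]

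The remaining task is to show $Q_{b,t}(f,E_t f)\leq C'Q_{b,t}(\vp,\vp)+C_t'(\text{lower order})$. Using the relation $(\cdot,\cdot)_0=(\cdot,E_t\cdot)_t$, one has $Q_{b,t}(f,E_t f)=(\Box_{b,t}f,f)_0$. I would then expand using the commutators
\[
\dbarb f=\tilde\zeta\Psi_t^0\zeta\,\dbarb\vp+[\dbarb,\tilde\zeta\Psi_t^0\zeta]\vp,\qquad \dbar_{b,t}^*f=\tilde\zeta\Psi_t^0\zeta\,\dbar_{b,t}^*\vp+[\dbar_{b,t}^*,\tilde\zeta\Psi_t^0\zeta]\vp,
\]
where both commutators are zeroth-order pseudodifferential operators (one less than the sum of the orders of the factors). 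A small-constant/large-constant Cauchy--Schwarz argument then gives
\[
(\Box_{b,t}f,f)_0\leq C'Q_{b,t}(\vp,\vp)+C_t''\|\vp\|_0^2.
\]

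The main obstacle is removing the residual $\|\vp\|_0^2$ on the right. I would handle this by iterating the same scheme: replace $\vp$ there by its microlocal decomposition, use the fact that the $\opC^{\pm}$ pieces are already controlled by $Q_{b,t}(\vp,\vp)$ (via the basic identities of Section~\ref{sec: basic estimate}), and apply the interpolation $\|\vp\|_0^2\leq\delta'\|\vp\|_1^2+C_{\delta'}\|\vp\|_{-1}^2$ together with Lemma~\ref{lem:forms in C_0} once more to the $\opC^0$ piece. Each iteration reduces the coefficient of any leftover $\|\vp\|_0^2$ term by a factor $\delta'\Gamma$, so after finitely many iterations one obtains the desired bound $\epsilon Q_{b,t}(\vp,\vp)+C_{\epsilon,t}\|\vp\|_{-1}^2$. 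Careful bookkeeping is required to ensure that the accumulated $t$-dependent constants from the commutators and from $E_t$ are absorbed into $C_{\epsilon,t}$, while the coefficient in front of $Q_{b,t}(\vp,\vp)$ remains arbitrarily small.
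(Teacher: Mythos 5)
The paper does not actually prove this proposition; it cites \cite[Proposition~4.11]{Rai10}, so your attempt is a blind reconstruction of a deferred argument. Your opening moves are reasonable: the interpolation $\|f\|_0^2 \leq \delta\|f\|_1^2 + (4\delta)^{-1}\|f\|_{-1}^2$ together with Lemma~\ref{lem:forms in C_0} correctly reduces the problem to bounding $Q_{b,t}(f,E_t f)$ by $Q_{b,t}(\vp,\vp)$ plus lower order terms, and the commutator expansion $\dbarb f = \tilde\zeta\Psi_t^0\zeta\,\dbarb\vp + [\dbarb,\tilde\zeta\Psi_t^0\zeta]\vp$ is the right tool for that.

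The genuine gap is your final iteration scheme for eliminating the residual $\|\vp\|_0^2$. You assert that the $\opC^\pm$ pieces of $\|\vp\|_0^2$ are ``already controlled by $Q_{b,t}(\vp,\vp)$'' via Section~\ref{sec: basic estimate}, but at this point in the logical order that is not available. Propositions~\ref{prop:estim+} and~\ref{prop:estim-} bound the $\pm$ pieces only modulo an error $O_t(\|\tilde\zeta\tilde\Psi_t^0\zeta\vp\|_0^2)$; the basic estimate~\eqref{estimateconclusion}, which would absorb that error, is itself deduced \emph{from} Proposition~\ref{prop:Psi_0 terms}, so invoking it is circular. Worse, the microlocalizer $\tilde\Psi_t^0$ appearing in those error terms \emph{dominates} $\Psi_t^0$ rather than being dominated by it, so $\|\tilde\zeta\tilde\Psi_t^0\zeta\vp\|_0^2$ is not majorized by $\|f\|_0^2$ and cannot be absorbed into the left side; each pass of your iteration widens the cone rather than shrinking it, so the scheme does not terminate. (You also do not use the essential structural fact that $[\dbarb,\tilde\zeta\Psi_t^0\zeta]$ has symbol supported essentially in $\tilde\opC^0$, which would sharpen your residual from $\|\vp\|_0^2$ to a $\Psi^0$-localized quantity; but even with this improvement, the domination issue blocks closure.) A correct proof must avoid decomposing $\|\vp\|_0^2$ microlocally into $\pm$ pieces and instead exploit only the microlocal ellipticity of $\Box_{b,t}$ on the $\opC^0$ region together with a carefully chosen nested family of microlocalizers, which is what the cited reference carries out.
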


We are finally ready to proof Proposition \ref{prop:mainestimate}.
\begin{proof}[Proof of the Proposition \ref{prop:mainestimate}]
We only need to set the value of the constant $ K, K' $ and $ K_{{t}} $ in Lemma \ref{lemma:nic2}
according to the Propositions \ref{prop:estim+} and \ref{prop:estim-}. From the definition of
$ \normm{\cdot}_{{t}} $, the estimate \eqref{eq:mainestimative} follows. 

The passage from \eqref{eq:mainestimative} to the basic estimate \eqref{estimateconclusion} follows immediately from
Lemma \ref{lem:forms in C_0} and Proposition \ref{prop:Psi_0 terms}.
\end{proof}

%
%
\section{The Proof of Theorem \ref{thm:mainthm, Sobolev}}\label{sec:main theorem, weighted}

Now that we have the tools of Section \ref{sec: basic estimate}, we can prove strong closed range estimates {using many of the
arguments of \cite{HaRa11}. We do, however, use a substantially different elliptic regularization to pay particular attention to 
the regularity of the weighted harmonic forms, the relationship of the harmonic forms with the regularized operators, and an especially
detailed look at the induction base case}. 

\begin{lemma}[Lemma 5.1, \cite{HaRa11}]
	Let $ M $ be a smooth, embedded $ CR $-manifold of hypersurface type that satisfies $Y(q)$ weakly.
	If $ t>0 $ is {suitably large  and the functions $\phi^+,\phi^-$} are as in (\ref{eqn:phi+, phi- defns}), then
\begin{enumerate}[(i)]
	\item $ \opH_{{t}}^q $ is finite dimensional;
	\item There exists $ C $ that does not depend on $ \phi^+ $ and $ \phi^- $ so that for all $ (0,q) $-forms $ u \in \DQ$ 
	satisfying $ u\perp \opH_{{t}}^q $ (with respect to $ \ip{\cdot,\cdot}_{{t}} $) we have
	\begin{equation}\label{eq:c47}
	\normm{u}_{{t}}^2\leq CQ_{b,{t}}(u,u).
	\end{equation}
\end{enumerate}
\end{lemma}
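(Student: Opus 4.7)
The plan is to deduce both parts from the basic estimate (Proposition \ref{prop:mainestimate}), the compact Rellich embedding $L^2(M)\hookrightarrow H^{-1}(M)$, and the closedness of $\pbb$ and $\pb_{b,t}^*$ as densely defined operators on $L^2(M,\normm{\cdot}_t)$. I will use freely the equivalence \eqref{eq:normequiv} so that convergence in $\normm{\cdot}_t$ is the same as $L^2$ convergence, and I will take for granted that the basic estimate, originally stated for smooth forms, extends to all of $\DQ$ via the standard Friedrichs density argument recorded in \cite{Nic06}.

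For part (i), if $f\in\opH_t^q$ then $Q_{b,t}(f,f)=0$, so Proposition \ref{prop:mainestimate} collapses to
\[
t\,\normm{f}_t^2 \;\leq\; K_t\,\norm{f}_{-1}^2 \qquad\text{for every } f\in\opH_t^q.
\]
Thus the $\normm{\cdot}_t$-unit ball of $\opH_t^q$ is bounded in $L^2$, hence precompact in $H^{-1}$ by Rellich, and therefore precompact in $\normm{\cdot}_t$ by the displayed inequality. A Hilbert space with a precompact unit ball is finite dimensional.

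For part (ii), I argue by contradiction: suppose no $C$ works, and choose $\{u_n\}\subset\DQ$ with $u_n\perp\opH_t^q$ in $\ip{\cdot,\cdot}_t$, $\normm{u_n}_t=1$, and $Q_{b,t}(u_n,u_n)\to 0$. Applying Proposition \ref{prop:mainestimate} to $u_n-u_m$ yields
\[
t\,\normm{u_n-u_m}_t^2 \;\leq\; K\,Q_{b,t}(u_n-u_m,u_n-u_m) + K_t\,\norm{u_n-u_m}_{-1}^2,
\]
and $Q_{b,t}(u_n-u_m,u_n-u_m)\le 2Q_{b,t}(u_n,u_n)+2Q_{b,t}(u_m,u_m)\to 0$. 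Since $\{u_n\}$ is $L^2$-bounded, Rellich produces a subsequence (still labelled $u_n$) that is convergent in $H^{-1}$, and then the displayed inequality forces it to be Cauchy in $\normm{\cdot}_t$. Let $u$ be its $L^2$ limit; then $\normm{u}_t=1$, and since $\pbb u_n\to 0$ and $\pb_{b,t}^* u_n\to 0$ in $L^2$, closedness of $\pbb$ and $\pb_{b,t}^*$ puts $u\in\DQ$ with $\pbb u=0=\pb_{b,t}^* u$, i.e.\ $u\in\opH_t^q$. Continuity of $\ip{\cdot,\cdot}_t$ applied to the orthogonality $u_n\perp\opH_t^q$ gives $u\perp\opH_t^q$, hence $u=0$, contradicting $\normm{u}_t=1$.

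The main obstacle is the rigorous passage to the limit inside $\DQ$: one needs both the basic estimate and the closed-operator framework to hold for arbitrary elements of $\DQ$ in the weighted inner product, not just for smooth forms in the unweighted $L^2$. This is precisely what the carefully constructed norm of Section \ref{sec:defs} is engineered to afford, since it leaves $\pbb$ and $\pb_{b,t}^*$ a closed adjoint pair while remaining uniformly equivalent to the unweighted norm; once that foundation is in place, the rest is a standard Rellich-plus-contradiction argument.
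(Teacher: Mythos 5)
Your argument is, in substance, the same one the paper relies on: the lemma is quoted from \cite{HaRa11}, and the proof there (as in \cite{Nic06}) is precisely the combination you use — the basic estimate of Proposition \ref{prop:mainestimate}, which on $\opH_{t}^q$ collapses to $t\normm{f}_t^2\leq K_t\norm{f}_{-1}^2$ so that Rellich compactness forces $\dim \opH_t^q<\infty$, and a compactness/contradiction argument for the estimate on $^\perp\opH_t^q$, using closedness of $\pbb$ and $\pb_{b,t}^*$ and the norm equivalence \eqref{eq:normequiv} to identify the limit as an element of $\opH_t^q\cap{}^\perp\opH_t^q=\{0\}$. The steps you carry out (applying the estimate to $u_n-u_m$, the Cauchy property in $\normm{\cdot}_t$, passing orthogonality and the vanishing of $\pbb u_n,\ \pb_{b,t}^*u_n$ to the limit) are all correct, and your appeal to the Friedrichs density of smooth forms in $\DQ$ in the graph norm to extend the basic estimate is the standard and legitimate way to handle that point.

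The one clause your proof does not deliver is that $C$ \emph{does not depend on} $\phi^+$ and $\phi^-$. A contradiction argument run for a fixed $t$ only yields the existence of \emph{some} constant $C=C(t,\phi^\pm)$: the norm $\normm{\cdot}_t$, the energy form $Q_{b,t}$, and the harmonic space $\opH_t^q$ all vary with the weight, and nothing in the soft Rellich argument controls how the resulting constant behaves as the weight changes (spectrally, it gives a positive gap above $0$ for $\Box_{b,t}$ but no uniform lower bound on it). Since the weight-independence is explicitly part of the statement — and is the only part that requires more than the compactness argument — you should either track the constant quantitatively through the basic estimate, whose leading constant $K$ in \eqref{estimateconclusion} is independent of $t$ and of the choice in \eqref{eqn:phi+, phi- defns}, or state explicitly that your argument proves the estimate \eqref{eq:c47} with a constant that a priori depends on the weight, which is a weaker conclusion than the lemma as quoted from \cite{HaRa11}.
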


By \cite[Theorem 1.1.2]{Hor65}, $ \pbb:L^2_{0,q}(M {\normm{\cdot}_t})\rightarrow L^2_{0,q+1}(M, {\normm{\cdot}_t})$ and 
$ \dbars_{b,{t}}:L^2_{0,q}(M, {\normm{\cdot}_t})\rightarrow L^2_{0,q-1}(M, {\normm{\cdot}_t}) $ have closed range. Consequently, their adjoints 
$ \dbarb:L^2_{0,q-1}(M, {\normm{\cdot}_t})\rightarrow L^2_{0,q}(M, {\normm{\cdot}_t})$ and 
$ \dbars_{b,{t}}:L^2_{0,q+1}(M, {\normm{\cdot}_t})\rightarrow L^2_{0,q}(M, {\normm{\cdot}_t}) $ 
have closed range as well \cite[Theorem 1.1.1]{Hor65}.

\subsection{{Continuity} of the Green operator $ G_{q,{t}} $}

The {complex Green operator $G_{q,t}$ is the inverse to $\Box_{b,t}$ on $\opH_{q,t}^{\perp}(M)$
(and is defined to be $0$ on $\opH_{q,t}(M)$).}
Recall the following well-known lemma. See, e.g., \cite{FoKo72,Nic06}.
\begin{lemma}\label{lemma:54Nicoara}
	Let $ H $ be a Hilbert space equipped with the inner product $ \pare{\cdot,\cdot} $, corresponding norm $ \|\cdot\|$, 
	and a positive definite Hermitian form  $ Q $ defined on a dense {subset} $ D\subset H $ satisfying
	\begin{equation}\label{eq:c.}
		\norm{\varphi}^2\leq CQ(\varphi,\varphi)
	\end{equation}
	for all $ \varphi\in D $. {Furthermore,} $ D $ and $ Q $ are such that $ D $ is a Hilbert space under the inner product $ Q(\cdot,\cdot) $. 
	Then there exists a unique self-adjoint injective operator  $ F $ with $ \Dom(F) \subset D$ satisfying
	\begin{equation*}
	Q(\varphi,\phi)=(F\varphi,\phi)
	\end{equation*}
	for all $ \varphi\in \Dom(F) $ and $ \phi\in D $. $ F $ is called the Friedrich's representative.
\end{lemma}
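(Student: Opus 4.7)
The plan is to construct $F$ as the inverse of a bounded, self-adjoint, injective operator obtained via the Riesz representation theorem applied to the Hilbert space $(D, Q)$. First I would observe that the estimate $\|\varphi\|^2 \leq C Q(\varphi,\varphi)$ means the identity map $(D,Q) \hookrightarrow (H,(\cdot,\cdot))$ is continuous. For any fixed $\varphi \in H$, the anti-linear functional $\phi \mapsto (\varphi,\phi)$ on $D$ is bounded with respect to $Q$, since
\[
|(\varphi,\phi)| \leq \|\varphi\|\,\|\phi\| \leq \sqrt{C}\,\|\varphi\|\,Q(\phi,\phi)^{1/2}.
\]
Because $(D,Q)$ is a Hilbert space by hypothesis, the Riesz representation theorem produces a unique element $T\varphi \in D$ with $(\varphi,\phi) = Q(T\varphi,\phi)$ for every $\phi \in D$.

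Next I would check that the resulting map $T: H \to D \subset H$ is linear, bounded, self-adjoint, and injective. Linearity is immediate from uniqueness. Taking $\phi = T\varphi$ gives $\|T\varphi\|\cdot\|\varphi\| \geq (\varphi,T\varphi) = Q(T\varphi,T\varphi) \geq C^{-1}\|T\varphi\|^2$, whence $\|T\varphi\| \leq C\|\varphi\|$ so $T$ is bounded. Self-adjointness follows from
\[
(T\varphi,\psi) = \overline{(\psi,T\varphi)} = \overline{Q(T\psi,T\varphi)} = Q(T\varphi,T\psi) = (\varphi,T\psi),
\]
using that $Q$ is Hermitian. For injectivity, $T\varphi = 0$ forces $(\varphi,\phi) = Q(0,\phi) = 0$ for all $\phi \in D$, and density of $D$ in $H$ yields $\varphi=0$.

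Finally I would define $F = T^{-1}$ with $\Dom(F) = \Ran(T) \subset D$. Since $T$ is bounded, self-adjoint, and injective, its inverse $F$ on the range is automatically self-adjoint (this is standard: a bounded injective self-adjoint operator with dense range has a self-adjoint, possibly unbounded, inverse, and density of $\Ran(T)$ follows from injectivity of $T = T^*$). Injectivity of $F$ is tautological. The defining identity $Q(\varphi,\phi) = (F\varphi,\phi)$ for $\varphi \in \Dom(F)$, $\phi \in D$ comes from writing $\varphi = T\psi$ so that $(F\varphi,\phi) = (\psi,\phi) = Q(T\psi,\phi) = Q(\varphi,\phi)$. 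Uniqueness of $F$ is immediate: if $\tilde F$ is another such operator, then $((F-\tilde F)\varphi,\phi) = 0$ for all $\phi \in D$, and density of $D$ forces $F = \tilde F$ on $\Dom(F)\cap\Dom(\tilde F)$, with the domains matching by a symmetric argument.

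The only subtlety I anticipate is the bookkeeping around the unboundedness of $F$ and justifying that $\Ran(T)$ is dense (so that $F = T^{-1}$ is genuinely self-adjoint rather than merely symmetric); this is handled by the standard fact that $\overline{\Ran(T)} = \Null(T^*)^\perp = \Null(T)^\perp = H$. Everything else is a routine Riesz-representation argument, which is why the result is attributed to standard references such as \cite{FoKo72,Nic06}.
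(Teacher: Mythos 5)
Your construction is correct, and it is essentially the canonical proof of this lemma: the paper itself offers no argument at all, citing it as well known from Folland--Kohn and Nicoara, and the proof in those references is precisely the Friedrichs-representative construction you give (apply Riesz in the Hilbert space $(D,Q)$ to the functional $\phi\mapsto(\varphi,\phi)$, obtain a bounded, self-adjoint, injective $T$ with $(\varphi,\phi)=Q(T\varphi,\phi)$, and set $F=T^{-1}$ on $\Dom(F)=\Ran(T)$, which is dense since $\overline{\Ran(T)}=\Null(T)^{\perp}=H$). All the individual steps check out: the bound $\|T\varphi\|\le C\|\varphi\|$, self-adjointness of $T$ from the Hermitian symmetry of $Q$, injectivity from density of $D$, and the identity $Q(\varphi,\phi)=(F\varphi,\phi)$ on $\Dom(F)$. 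The only place where your write-up is looser than it should be is uniqueness: "the domains matching by a symmetric argument" glosses over the real point. The clean route is to show that any admissible $\tilde F$ is a restriction of $F$: for $\varphi\in\Dom(\tilde F)$ one has $Q(\varphi,\phi)=(\tilde F\varphi,\phi)=Q(T\tilde F\varphi,\phi)$ for all $\phi\in D$, hence $\varphi=T\tilde F\varphi\in\Ran(T)$ and $F\varphi=\tilde F\varphi$, so $\tilde F\subset F$; since both are self-adjoint and self-adjoint operators admit no proper symmetric extensions, $\tilde F=F$. This is a routine repair, not a gap in the construction itself.
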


In order to use the result above, we prove a density result on ${^\perp \opH_{{t}}^q}(M)$.
\begin{lemma}\label{eq:lemmaclaim1}
 $ \pare{\DQ\cap {^\perp\opH_{{t}}^q}(M),Q_{b,{t}}(\cdot,\cdot)^{1/2}} $ is a Hilbert space (for $ (0,q) $-forms), and $ \DQ\cap {^\perp \opH_{{t}}^q}(M) $ is dense in $ ^\perp \opH_{{t}}^q $.
\end{lemma}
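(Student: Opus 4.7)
The plan has two independent parts: completeness of $\DQ\cap{}^\perp\opH_{t}^q(M)$ under $Q_{b,t}(\cdot,\cdot)^{1/2}$, and density in ${}^\perp\opH_{t}^q(M)$ (with respect to $\normm{\cdot}_t$, equivalently $\|\cdot\|_0$ by \eqref{eq:normequiv}).

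For completeness, let $\{u_n\}\subset \DQ\cap{}^\perp\opH_t^q(M)$ be Cauchy with respect to $Q_{b,t}(\cdot,\cdot)^{1/2}$. First invoke the basic estimate \eqref{eq:c47}, which applies precisely because each $u_n\perp \opH_t^q$, to conclude that $\{u_n\}$ is Cauchy in $\normm{\cdot}_t$ and hence convergent to some $u\in L^2_{0,q}(M,\normm{\cdot}_t)$. At the same time, $\{\dbarb u_n\}$ and $\{\dbarbst u_n\}$ are Cauchy in $L^2_{0,q+1}(M,\normm{\cdot}_t)$ and $L^2_{0,q-1}(M,\normm{\cdot}_t)$ respectively, so they too converge. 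Since $\dbarb$ and $\dbarbst$ are closed operators (with $\dbarbst$ the adjoint of $\dbarb$ with respect to $(\cdot,\cdot)_t$), we obtain $u\in\DQ$ with $\dbarb u_n\to\dbarb u$ and $\dbarbst u_n\to\dbarbst u$ in the appropriate weighted $L^2$ spaces. Because ${}^\perp\opH_t^q(M)$ is $\normm{\cdot}_t$-closed (the orthogonal complement of a subspace), $u\in{}^\perp\opH_t^q(M)$, and $Q_{b,t}(u_n-u,u_n-u)\to 0$.

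For density, let $u\in{}^\perp\opH_t^q(M)$. Since $M$ is compact without boundary, smooth $(0,q)$-forms lie in $\DQ$, and they are dense in $L^2_{0,q}(M,\normm{\cdot}_t)$ by Friedrichs mollification together with a partition of unity (or simply because $\normm{\cdot}_t$ and $\|\cdot\|_0$ are equivalent). Pick smooth $v_n$ with $v_n\to u$. Let $H_t:L^2_{0,q}(M,\normm{\cdot}_t)\to\opH_t^q(M)$ denote the orthogonal projection; $H_t$ is well defined and bounded because $\opH_t^q(M)$ is finite dimensional, and $H_t v_n\in \opH_t^q(M)\subset\DQ$ (harmonic forms are in the kernels of $\dbarb$ and $\dbarbst$ by definition). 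Set $u_n:=v_n-H_t v_n$; then $u_n\in\DQ\cap{}^\perp\opH_t^q(M)$, and since $H_t u=0$, continuity of $H_t$ yields $u_n=v_n-H_t v_n\to u-H_t u=u$ in $\normm{\cdot}_t$.

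There is no real obstacle: the basic estimate does the heavy lifting for completeness, and finite dimensionality of $\opH_t^q(M)$ together with the triviality of domain issues on a compact manifold without boundary makes density immediate. The only point that needs care is to remember that one must subtract the harmonic projection when approximating, which is harmless because $\opH_t^q(M)\subset\DQ$.
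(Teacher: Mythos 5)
Your proof is correct and takes essentially the same route as the paper: for completeness you use the basic estimate \eqref{eq:c47} together with closedness of $\pbb$ and $\dbarbst$ exactly as the paper does, and for density you approximate $u\in{}^\perp\opH_{t}^q(M)$ by elements of $\DQ$ and subtract the harmonic projection. The only (harmless) difference is that you conclude $H^q_{t}v_n\to H^q_{t}u=0$ directly from continuity of the orthogonal projection, whereas the paper reaches the same conclusion by a slightly more roundabout limiting argument.
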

\begin{proof}
Suppose $\{u_\ell\} \subset \DQ\cap {^\perp\opH_{{t}}^q}(M) $ is a Cauchy sequence with respect to the norm $ Q_{b,{t}}(\cdot,\cdot)^{1/2} $. 
Then $ \pbb u_\ell $ and $ \dbars_{b,{t}} u_\ell $ are Cauchy sequences in $ L^2_{0,q+1}(M,{\normm{\cdot}_t}) $ and 
$ L^2_{0,q-1}(M,{\normm{\cdot}_t}) $, respectively, 
so they converge to $ v_1\in L^2_{0,q+1}(M,{\normm{\cdot}_t}) $ and $ v_2\in L^2_{0,q-1}(M,{\normm{\cdot}_t}) $ respectively. 
By \eqref{eq:c47}, this means $ \{u_\ell\} $ is a Cauchy sequence in $ L^2_{0,q}(M,{\normm{\cdot}_t}) $, hence converges to some 
$ u\in L^2_{0,q}(M,{\normm{\cdot}_t}) $. Thus $ u\in \DQ $, $ \pbb u=v_1 $, and $ \dbars_{b,{t}} u=v_2 $ 
since $ \pbb $ and $ \dbars_{b,{t}} $ are closed operators. Since $ 0=\pare{u_\ell,w}_{{t}} $ for all 
$ w\in \opH_{{t}}^q $ and $ \normm{u_\ell-u}_{{t}} \rightarrow 0$,
$ u\in {^\perp\opH_{{t}}^q}(M) $. Thus $ u\in \DQ\cap {^\perp\opH_{{t}}^q }$.

Next, suppose $ u\in {^\perp\opH_{{t}}^q}(M) $ is nonzero and $ u_\ell\in \DQ $ satisfies $ u_\ell\rightarrow u $ on $ L^2_{0,q}(M,{\normm{\cdot}_t}) $. Let 
$ v_\ell=(I-H^q_{{t}} )u_\ell ${, with $ H^q_{{t}} $ the orthogonal projection onto $ \opH_{{t}}^q $  }. The forms $ v_\ell\in {^\perp\opH_{{t}}^q}(M) \cap \DQ $. 
Since $u\neq 0$, it cannot be the case that $ v_\ell =0$ for every {$ \ell $}. 
Since $ \normm{u_\ell}_{{t}}^2=\normm{H^q_{{t}} u_\ell}_{{t}}^2+\normm{v_\ell}_{{t}}^2 $,  and the forms $H^q_{{t}} u_\ell$ and $v_\ell$ are orthogonal,
$ H^q_{{t}} u_\ell $ and $ v_\ell $ both converge in $ L^2_{0,q}(M,{\normm{\cdot}_t}) $. 
Let $ \alpha=\lim_{\ell\to \infty} H^q_{{t}}  u_\ell $, 
$ v=\lim_{\ell\to\infty} v_\ell $, and since that $ H^q_{{t}} u_\ell=u_\ell-v_\ell $, $ \alpha=u-v\in {^\perp\opH_{{t}}^q}(M) $. 
However, $ \alpha\in \opH_{{t}}^q $ since $ \opH_{{t}}^q $ is closed, forcing $ \alpha=0 $. 
Thus, $ \normm{u-v_\ell}_{{t}}\leq \normm{u-u_\ell}_{{t}}+\normm{H^q_{{t}}  u_\ell}_{{t}}\rightarrow 0 $. 
Consequently $ \DQ\cap {^\perp \opH_{{t}}^q}(M) $ is dense in ${^\perp\opH_{{t}}^q}(M) $. 
\end{proof}
We now can establish the existence and $L^2$-continuity of the complex Green operator $G_{q,{t}}$ {using the following well-known result
(we adapt the presentation and argument in \cite[Corollary 5.5]{Nic06}}.
\begin{corollary}\label{cor:G_q exists}
	Let $ M $ be a smooth compact, orientable embedded $ CR $- manifold of hypersurface type that satisfies weak $Y(q)$.
	If $ t >0$ is suitable large, $\phi^+,\phi^-$ are as in (\ref{eqn:phi+, phi- defns}), and $ \alpha\in {^\perp\opH}_{{t}}^q $, then there exists a unique 
$ \varphi_t\in {^\perp\opH}_{{t}}^q\cap \DQ $ such that
	\[
	Q_{b,{t}}(\varphi_t,\phi)=\pare{\alpha,\phi}_{{t}},  \qquad \text {for all } \phi \in \DQ.
	\]
	We define the Green operator $ G_{q,{t}} $ to be the operator that maps $ \alpha $ into $ \varphi_t $. $ G_{q,{t}} $ is a bounded operator, and if additionally $ \alpha $ is closed, then $ u_t=\dbars_{b,{t}} G_{q,{t}}\alpha $ satisfies $ \pbb u_t=\alpha $. We define $ G_{q,{t}} $ to be identically 0 on $ \opH_{{t}}^q $.
\end{corollary}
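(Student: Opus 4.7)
The plan is to invoke the Friedrichs representation theorem (Lemma \ref{lemma:54Nicoara}) with Hilbert space $H = {}^\perp\opH_{t}^q(M)$, dense subspace $D = \DQ \cap {}^\perp\opH_{t}^q(M)$, and Hermitian form $Q = Q_{b,t}$. Two ingredients from the preceding material give exactly the hypotheses of that lemma: the basic estimate \eqref{eq:c47} supplies the coercivity $\normm{u}_t^2 \leq C\, Q_{b,t}(u,u)$, while Lemma \ref{eq:lemmaclaim1} gives both the Hilbert-space property of $(D, Q_{b,t}^{1/2})$ and the density of $D$ in $H$. Thus Lemma \ref{lemma:54Nicoara} produces a self-adjoint operator $F$ on ${}^\perp\opH_t^q$.

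Next, given $\alpha \in {}^\perp\opH_t^q$, I would observe that the functional $\phi \mapsto (\alpha,\phi)_t$ on $D$ is bounded in the $Q_{b,t}^{1/2}$-norm: by Cauchy--Schwarz and \eqref{eq:c47},
\[
|(\alpha,\phi)_t| \leq \normm{\alpha}_t \normm{\phi}_t \leq \sqrt{C}\,\normm{\alpha}_t\, Q_{b,t}(\phi,\phi)^{1/2}.
\]
The Riesz representation theorem in $(D,Q_{b,t})$ then yields a unique $\varphi_t \in D$ with $Q_{b,t}(\varphi_t,\phi) = (\alpha,\phi)_t$ for all $\phi \in D$. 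To extend the identity to all $\phi \in \DQ$, write $\phi = H_t^q \phi + (I - H_t^q)\phi$, noting that $H_t^q\phi \in \opH_t^q$ so $Q_{b,t}(\varphi_t, H_t^q\phi) = 0$ while $(\alpha, H_t^q\phi)_t = 0$ because $\alpha \perp \opH_t^q$. Setting $G_{q,t}\alpha := \varphi_t$ on ${}^\perp\opH_t^q$ and $G_{q,t} := 0$ on $\opH_t^q$ then defines the complex Green operator. Boundedness follows by testing with $\phi = \varphi_t$:
\[
\normm{\varphi_t}_t^2 \leq C\, Q_{b,t}(\varphi_t,\varphi_t) = C(\alpha,\varphi_t)_t \leq C\,\normm{\alpha}_t\normm{\varphi_t}_t.
\]

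For the solvability assertion, assume $\dbarb \alpha = 0$ and set $u_t = \dbars_{b,t} G_{q,t}\alpha$; I would verify $\dbarb u_t = \alpha$ via the closed range Hodge decomposition supplied by the preceding closed-range discussion after Lemma (eq:c47). Concretely, $\alpha \perp \opH_t^q$ together with $\dbarb\alpha = 0$ forces $\alpha \perp \Ran(\dbars_{b,t})$ (since $(\alpha, \dbars_{b,t}\beta)_t = (\dbarb\alpha,\beta)_t = 0$), so $\alpha \in \overline{\Ran(\dbarb)}$, and the closed range of $\dbarb$ places $\alpha \in \Ran(\dbarb)$. Thus $\dbarb G_{q,t}\alpha$ is orthogonal to $\Ran(\dbarb)$: test $\phi = G_{q,t}\alpha$ in the weak equation, and compare with the test $\phi = \dbars_{b,t}\psi$ for $\psi$ in a suitable dense subset of $\Dom(\dbarb)\cap\Dom(\dbars_{b,t})$ at level $q+1$; after integration by parts using the closed-range property, the quantity $\dbars_{b,t}\dbarb G_{q,t}\alpha$ vanishes, so $\Box_{b,t} G_{q,t}\alpha = \dbarb \dbars_{b,t} G_{q,t}\alpha = \alpha$, which is exactly $\dbarb u_t = \alpha$.

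The main obstacle I expect is the last paragraph: pushing the weak identity far enough to conclude $G_{q,t}\alpha \in \Dom(\Box_{b,t})$ with $\Box_{b,t} G_{q,t}\alpha = \alpha$, and then extracting $\dbars_{b,t}\dbarb G_{q,t}\alpha = 0$ from the closedness of $\alpha$. This step is where one must be careful about domain considerations for the unbounded operators $\dbarb$ and $\dbars_{b,t}$, and where the closed-range statements established above (via \cite[Theorem 1.1.1--1.1.2]{Hor65}) do the real work by enabling the Hodge-type orthogonal decomposition. The earlier portions of the argument are essentially a direct application of Friedrichs/Riesz machinery and should proceed with only routine bookkeeping.
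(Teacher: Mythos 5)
Your proposal is correct and follows essentially the same route as the paper, which (adapting Nicoara's Corollary 5.5) combines the Friedrichs representative of Lemma \ref{lemma:54Nicoara} with the coercivity estimate \eqref{eq:c47} and the density/completeness statement of Lemma \ref{eq:lemmaclaim1}, and then proves $\dbarb u_t=\alpha$ by the standard argument showing $\dbars_{b,t}\dbarb G_{q,t}\alpha=0$. The only remark is that your detour through closed range of $\dbarb$ in the last paragraph is unnecessary: once $G_{q,t}\alpha\in\Dom(\Box_{b,t})$ with $\Box_{b,t}G_{q,t}\alpha=\alpha$, pairing this identity with $\dbars_{b,t}\dbarb G_{q,t}\alpha$ and using $\dbarb\alpha=0$ together with $\Ran(\dbarb)\perp\Ran(\dbars_{b,t})$ already gives $\normm{\dbars_{b,t}\dbarb G_{q,t}\alpha}_t=0$.
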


\subsection{Smoothness of harmonic forms}

Here we will prove that $ \opH_{{t}}^q \subset H^s_{0,q}{(M,\normm{\cdot}_t)}$  for $ t $ sufficiently large.
We adapt the arguments of \cite{KhRa20,HaRa20SCRE}. See also \cite{Nic06,Koh73}. 

Fix $s\geq 1$. For forms $f,g \in H^1_{0,q}{(M,\normm{\cdot}_t)}$, set
\[
Q_{b,{t}}^{\delta,\nu}(f,g) =Q_{b,{t}}(f,g)+\delta Q_{d_b}(f,g)+\nu\pare{f,g}_{{t}}
\]
where $ Q_{d_b}(\cdot,\cdot) $ is the hermitian inner product associated to the Rham exterior derivative $ d_b $, 
i.e., $ Q_{d_b}(u,v)=\pare{d_bu,d_bv}_{{t}}+ \pare{d_{b,{t}}^*u,d_{b,{t}}^*v}_{{t}}${, and $ \delta , \nu\geq0 $ }. Also note that 
$ Q_{b,{t}}^{0,\nu}(f,g) =Q_{b,{t}}(f,g)+\nu\pare{f,g}_{{t}}$ for $ f,g \in \DQ$. Then
\[
\normm{\vp}_{{t}}^2\leq \frac 1\nu Q_{b,{t}}^{\delta,\nu}(\vp,\vp).
\]
for all $\vp \in H^1_{0,q}{(M,\normm{\cdot}_t)}$ if $\delta>0$ and all $\vp\in\DQ$ if $\delta =0$.
By the Lemma \ref{lemma:54Nicoara} there exist self-adjoint operators (for $0 \leq \delta \leq 1$ and $0 < \nu \leq 1$)
$ \Box_{b,{t}}^{\delta,\nu}:\Dom(\Box_{b,{t}}^{\delta,\nu}) \to L^2_{0,q}{(M,\normm{\cdot}_t)} $, 
with inverses 
$ \gqtdn :L^2_{0,q}{(M,\normm{\cdot}_t)}\rightarrow \Dom(\Box_{b,{t}}^{\delta,\nu}) $
satisfying
\begin{equation}
\normm{\gqtdn \vp}^2_{{t}} \leq \frac 1\nu\normm{\vp}_{{t}}^2 \label{eq:cp1} \\
\end{equation}
for all $\vp\in L^2_{0,q}{(M,\normm{\cdot}_t)}$ and all $\delta \in [0,1]$.

{Our goal is to prove
\begin{equation}\label{eq:cp2}
\norm{\gqtzn\vp}_{H^s}\leq K_t\norm{\vp}_{H^s}+C_{t,s}\norm{\gqtzn \vp}_0.
\end{equation}
In fact,}
 \eqref{eq:cp2} is the main tool that we need to prove that $ \opH_{{t}}^q(M)\subset H^s_{0,q}{(M,\normm{\cdot}_t)} $, for $ t $ sufficiently large. 
Given \eqref{eq:cp2}, the argument {for regularity of the harmonic forms} 
follows nearly verbatim from \cite[Proposition 5.2]{Koh73}, from equation (5.20) onwards.
Equation \eqref{eq:cp2} plays the role of \cite[(5.20)]{Koh73}.

{We now prove \eqref{eq:cp2}.}
The operator $\Box_{b,{t}}^{\delta,\nu}$ is elliptic when $\delta>0$ 
which means that  $ \gqtdn: H^s_{0,q}{(M,\normm{\cdot}_t)} \to H^{s+2}_{0,q}{(M,\normm{\cdot}_t)} $.

If $ \varphi \in H^s_{0,q}{(M,\normm{\cdot}_t)} $, then
\[
\norm{\gqtdn\varphi}^2_{H^s}=\norm{\Lambda^s\gqtdn\varphi}_0^2\leq C_t \Norm{\Lambda^s\gqtdn\varphi}_{{t}}^2.
\]

Since $ \gqtdn\varphi\in H^{s+2}_{0,q}{(M,\normm{\cdot}_t)} $, the basic estimate yields
\begin{equation}
\Norm{\Lambda^s\gqtdn\varphi}_{{t}}^2
\leq \dfrac{K}{t}Q_{b,{t}}^{\delta,\nu}(\Lambda^s\gqtdn\varphi,\Lambda^s\gqtdn\varphi)+C_{t,s}\norm{\gqtdn\varphi}_{H^{s-1}} \label{eq:ct01}
\end{equation}

A careful integration by parts shows that
\begin{align*}
&\Norm{\pbb\Lambda^s\gqtdn\vp}_{{t}}^2 \\
&= \big\la \Lambda^s \dbarbsvp \dbarb\gqtdn\vp,\Lambda^s \gqtdn\vp\big\ra 
+ \big\la {\dbarb} \Lambda^s \gqtdn\vp, \big([\Lambda^s,\dbarb] + \Lambda^{-s}\big[[\Lambda^s,\dbarb], \Lambda^s\big]\big) \gqtdn \vp\big\ra\\
&+ \big\la {[\Lambda^s,\dbarb]} \gqtdn\vp, \big([\Lambda^s,\dbarb] + \Lambda^{-s}\big[[\Lambda^s,\dbarb], \Lambda^s\big]\big) \gqtdn \vp\big\ra
+ \big\la[\dbarb,\Lambda^s]\gqtdn\vp,\dbarb\Lambda^s\gqtdn\vp\big\ra.
\end{align*}
We next apply the same 
sequence of integration by parts and commutators to the other terms in $Q_{b,{t}}^{\delta,\nu}(\Lambda^s\gqtdn\vp,\Lambda^s\gqtdn\vp)$.
Using a small constant/large constant argument
and the fact that $\dbarbsvp = \dbarbs +t P_0$ where $P_0$ is a 
(pseudo)differential operator of order $0$, we can absorb terms to obtain
\begin{equation}\label{eqn:Q delta nu arg}	
Q_{b,{t}}^{\delta,\nu}(\Lambda^s\gqtdn\vp,\Lambda^s\gqtdn\vp)
\leq C\normm{\Lambda^s\vp}_{{t}}^2+C_s\normm{\Lambda^s\gqtdn\vp}_{{t}}^2+C_{t,s}\norm{\gqtdn \vp}_{H^{s-1}}
\end{equation}
where $ C $ does not depend  $ t,s,\delta$, or $\nu $, 
and $ C_s $ does not depend on $ t,\delta$, or $\nu $. 
By \eqref{eq:ct01}, for $ t $ sufficiently large
\begin{equation*}
\norm{\gqtdn\vp}_{H^s}^2\leq K_t\norm{\vp}_{H^s}^2+C_{t,s}\norm{\gqtdn\vp}_{H^{s-1}}^2.
\end{equation*}
By induction, we can reduce the $H^{s-1}$-norm to an $L^2$-norm, and by \eqref{eq:cp1}, we observe
\[
\norm{\gqtdn\vp}_{H^s}^2\leq K_t\norm{\vp}_{H^s}^2+C_{t,s,\nu}\norm{\vp}_0^2,
\]
uniformly in $ \delta>0 $. Then there exists a sequence $\{G_{q,{t}}^{\delta_k,\nu}\vp\}_k $ converging weakly to an element 
$ u_\nu $ in $ H^s_{0,q}{(M,\normm{\cdot}_t)} $ when $ \delta_k\rightarrow 0 $, and satisfying both 
\begin{equation}\label{eqn:u_nu H^s_{0,q}(M)ests}
\norm{u_\nu}_{H^s}\leq K_t\norm{\vp}_{H^s}+C_{t,s,\nu}\norm{\vp}_0
\quad\text{and}\quad
\norm{u_\nu}_{H^s}\leq K_t\norm{\vp}_{H^s}+C_{t,s}\norm{u_\nu}_0.
\end{equation}
Since $H^s_{0,q}{(M,\normm{\cdot}_t)}$ embeds compactly in $H^{s'}_{0,q}{(M,\normm{\cdot}_t)}$, it follows that $G_{q,{t}}^{\delta_k,\nu}\vp \to u_\nu$ strongly in
$H^{s'}_{0,q}{(M,\normm{\cdot}_t)}$ for $0 \leq s' < s$. 
Also, observe that the next conclusion is not automatic in the $s=1$ case.
\begin{align}
\Norm{\pbb \gqtdn \vp}_{{t}}^2+\Norm{\dbars_{b,{t}} \gqtdn \vp}_{{t}}^2
&\leq Q_{q,{t}}^{\delta,\nu}(\gqtdn\vp,\gqtdn\vp)\nn \\
&=\pare{\vp,\gqtdn \vp}_{{t}}\leq \normm{\vp}_{{t}}\Norm{\gqtdn\vp}_{{t}}\leq C_\nu\normm{\vp}_{{t}}^2, \label{eq:sqc1}
\end{align}
and, moreover,  $ \pbb G_{q,{t}}^{\delta_k,\nu}\vp $ and $ \dbars_{b,{t}} G_{q,{t}}^{\delta_k,\nu}\vp $ 
are Cauchy sequences in $L^2$. Indeed, assuming $ \delta_k\leq \delta_j $ we have
\begin{align*}
\normm{\pbb\gqt^{\delta_k,\nu}\vp-\pbb\gqt^{\delta_j,\nu}\vp}_{{t}}^2&
+\normm{\dbars_{b,{t}}\gqt^{\delta_k,\nu}\vp-\dbars_{b,{t}}\gqt^{\delta_j,\nu}\vp}_{{t}}^2 \\
&\leq Q_{b,{t}}^{\delta_k,\nu}(\gqt^{\delta_k,\nu}\vp-\gqt^{\delta_j,\nu}\vp,\gqt^{\delta_k,\nu}\vp-\gqt^{\delta_j,\nu}\vp)\\
&=\ip{\vp,\gqt^{\delta_k,\nu}\vp-\gqt^{\delta_j,\nu}\vp}_{{t}}-Q_{q,{t}}^{\delta_k,\nu}(\gqt^{\delta_j,\nu}\vp,\gqt^{\delta_k,\nu}\vp)+Q_{q,{t}}^{\delta_k,\nu}(\gqt^{\delta_j,\nu}\vp,\gqt^{\delta_j,\nu}\vp)\\
&\leq \ip{\vp,\gqt^{\delta_k,\nu}\vp-\gqt^{\delta_j,\nu}\vp}_{{t}}-Q_{q,{t}}^{\delta_k,\nu}(\gqt^{\delta_j,\nu}\vp,\gqt^{\delta_k,\nu}\vp)+Q_{q,{t}}^{\delta_j,\nu}(\gqt^{\delta_j,\nu}\vp,\gqt^{\delta_j,\nu}\vp)\\
&=\ip{\vp,\gqt^{\delta_k,\nu}\vp-\gqt^{\delta_j,\nu}\vp}_{{t}}-\ip{\gqt^{\delta_j,\nu}\vp,\vp}_{{t}}+\ip{\vp,\gqt^{\delta_j,\nu}\vp}_{{t}}\\
&\leq\normm{\vp}_{{t}}\Norm{\gqt^{\delta_k,\nu}\vp-\gqt^{\delta_j,\nu}\vp}_{{t}}
\end{align*}
Since $ \pbb $ and $ \dbars_{b,{t}} $ are closed operators it follows that $ u_\nu \in \DQ $, $ \pbb G_{q,{t}}^{\delta_k,\nu}\vp \to \pbb u_\nu $ 
and $ \dbars_{b,{t}} G_{q,{t}}^{\delta_k,\nu}\vp \rightarrow \dbars_{b,{t}} u_\nu  $ in $L^2$. 
This means $ G_{q,{t}}^{\delta_k,\nu}\vp $ converges strongly to $ u_\nu $ in the 
$ Q_{b,{t}}^{0,\nu}(\cdot,\cdot) ^{1/2} $-norm. Thus,  we will have, for any $ v\in H^2_{0,q}{(M,\normm{\cdot}_t)} $, by \eqref{eq:cp1},
\begin{align*}
\vab{Q_{b,{t}}^{0,\nu}(G_{q,{t}}^{\delta_k,\nu}\vp-\gqtzn \vp,v)}
&= \left|   Q_{b,{t}}^{\delta_k,\nu}(G_{q,{t}}^{\delta_k,\nu}\vp,v)-\delta_k\pare{d_b G_{q,{t}}^{\delta_k,\nu}\vp,d_b v}_{{t}} \right. \\
&\left.-\delta_k\pare{d_{b,{{t}}}^* G_{q,{t}}^{\delta_k,\nu}\vp,d_{b,{{t}}}^* v}_{{t}} - \pare{\vp,v}_{{t}}\right| \\
&= \delta_k\vab{ \pare{G_{q,{t}}^{\delta_k,\nu}\vp,(d_{b,{{t}}}^*d_b+d_bd_{b,{{t}}}^*)v}_{{t}} }
\leq \delta_k C_{\nu,t}\normm{\vp}_{{t}}\norm{v}_2.
\end{align*}
It now follows that $ \gqtzn\vp=u_\nu $ and by (\ref{eqn:u_nu H^s_{0,q}(M)ests}), \eqref{eq:cp2} now follows.
\subsection{Regularity of the Green operator and the canonical solutions.}

In this section we assume $ t $ is sufficiently large {and the} weighted harmonic $(0,q)$-forms, if they exist,
are elements of $H^1_{0,q}(M)\neq \lla{0} $. 
We  use an elliptic regularization argument. {The operator 
$G_{q,{t}}:L^2_{0,q}(M,\normm{\cdot}_t) \to L^2_{0,q}(M,\normm{\cdot}_t) \cap{} ^\perp\opH^q_t(M)$}. 
Consequently, the regularity result {for} $ G_{q,{t}} $ must be on ${^\perp\opH_{{t}}^q}(M)\cap H^s_{0,q}(M)$ for $ s\geq 0 $.
{Continuity on all of $H^s_{0,q}(M)$ then follows because we already established that harmonic forms are elements of  $H^s_{0,q}(M)$}.

The quadratic form $ Q_{q,{t}}^\delta(\cdot,\cdot) := Q_{q,{t}}^{\delta,0}(\cdot,\cdot)$ is an inner product on $H^1_{0,q}(M)$.
By \eqref{eq:c47},
\begin{equation}\label{eq:calpha}
\normm{u}_{{t}}^2\leq CQ_{b,{t}}(u,u)\leq CQ_{b,{t}}^\delta(u,u)
\end{equation}
for all $ u\in H^1_{0,q}(M) \cap{^\perp\opH_{{t}}^q}(M)$. If $ f\in L^2_{0,q}(M) $ then
\[
\vab{\ip{f,g}_{{t}}}\leq \normm{f}_{{t}}\normm{g}_{{t}}\leq \normm{f}_{{t}} C^{1/2}Q_{b,{t}}^\delta(g,g) 
\]
for all $ g\in {^\perp\opH_{{t}}^q}(M) \cap H^1_{0,q}(M)$. This means the mapping $g \mapsto\pare{f,g}_{{t}} $ is a bounded conjugate linear functional on 
${^\perp\opH_{{t}}^q}(M)\cap H^1_{0,q}(M) $. 
By the Riesz Representation Theorem, there exists an element $ G_{q,{t}}^\delta f\in  {^\perp\opH_{{t}}^q}(M)\cap H^1_{0,q}(M) $ 
such that $ \ip{f,g}_{{t}}=Q^\delta_{b,{t}}(G^\delta_{q,{t}}f,g) $ for all $ g\in {^\perp\opH_{{t}}^q}(M)\cap H^1_{0,q}(M)$. 
Moreover, by \eqref{eq:calpha}
	\[
	C^{-1}\normm{G^\delta_{q,{t}}f}_{{t}}^2\leq Q^\delta_{b,{t}}(G^\delta_{q,{t}}f,G^\delta_{q,{t}}f)
	=\ip{f,G^\delta_{q,{t}}f}_{{t}}\leq\Norm{f}_{{t}}\Norm{G^\delta_{q,{t}}f}_{{t}}
	\]
where $ C $ is independent of $ \delta $. Consequently,
\begin{equation}\label{eq:calpha2}
\normm{G^\delta_{q,{t}}f}_{{t}}\leq C\normm{f}_{{t}} 
\end{equation}

Since $ Q^\delta_{b,{t}}(\cdot,\cdot) $ satisfies $ Q^\delta_{b,{t}}(f,f) \geq \delta\normm{\Lambda^1 f}_t^{{2}}$ 
for every $ f\in H^1_{0,q}(M)$, the bilinear form $ Q^\delta_{b,{t}}(\cdot,\cdot) $ is elliptic on $ H^1_{0,q}(M)$.  
This means that $ \varphi \in H^s_{0,q}(M)$ implies $ G^\delta_{q,{t}}\varphi\in H^{s+2}_{0,q}(M) $ ({before, we only knew} that 
$ G^\delta_{q,{t}}\varphi \in {^\perp\opH_{{t}}^q}(M)\cap H^1_{0,q}(M)$).

Let $ \varphi \in H^s_{0,q}(M) $, then
\begin{equation}\label{eq:ct0}
\norm{G^\delta_{q,{t}}\varphi}^2_{H^s}=\norm{\Lambda^sG^\delta_{q,{t}}\varphi}_0^2\leq C_t \normm{\Lambda^sG^\delta_{q,{t}}\varphi}_{{t}}^2.
\end{equation}

We apply the basic estimate to $ G_{q,{t}}^\delta\varphi\in H^{s+2}_{0,q}(M) $ and observe
\begin{equation}\label{eq:ct1}
\normm{\Lambda^sG^\delta_{q,{t}}\varphi}_{{t}}^2\leq \dfrac{K}{t}Q_{b,{t}}(\Lambda^sG^\delta_{q,{t}}\varphi,\Lambda^sG^\delta_{q,{t}}\varphi)+C_{t,s}\norm{G^\delta_{q,{t}}\varphi}_{H^{s-1}}^2.
\end{equation}

Using the argument of \eqref{eqn:Q delta nu arg}, we can establish
\begin{align}
Q_{b,{t}}(\Lambda^s\gqtd\vp,\Lambda^s\gqtd\vp)&\leq Q_{b,{t}}^{\delta}(\Lambda^s\gqtd\vp,\Lambda^s\gqtd\vp)\nonumber \\ 
&\leq C\normm{\Lambda^s\vp}_{{t}}^2+C_s\normm{\Lambda^s\gqtd\vp}_{{t}}^2+C_{t,s}\norm{\gqtd \vp}_{H^{s-1}}^2 \label{eq:cc01} 
\end{align}
where $ C $ is independent of $ t,s,\delta$, and $ \nu $ and $ C_s $ is independent of $ t,\delta$, and $\nu $.

Plugging \eqref{eq:cc01} into \eqref{eq:ct1} and choosing $t$ sufficiently large to absorb terms, we have
\begin{equation}\label{eq:ct3}
\normm{\Lambda^sG^\delta_{q,{t}}\varphi}_{{t}}^2\leq  K_t\norm{\varphi}_{H^s}^2+C_{t,s}\norm{G^\delta_{q,{t}}\varphi}_{H^{s-1}},
\end{equation}
since $ \normm{\Lambda^sG^\delta_{q,{t}}\varphi}_{{t}} <\infty$. Plugging \eqref{eq:ct3} into \eqref{eq:ct0}, it follows that
\[
\norm{G^\delta_{q,{t}}\varphi}^2_{{H^s}}\leq K_t\norm{\varphi}_{H^s}^2+C_{t,s}\norm{G^\delta_{q,{t}}\varphi}_{H^{s-1}}^2.
\]
Using \eqref{eq:calpha2} and induction, we estimate
\begin{equation}\label{eq:ct31}
\norm{G^\delta_{q,{t}}\varphi}_{H^s}^2\leq K_t\norm{\varphi}_{H^s}^2+C_{t,s}\norm{\varphi}_0^2.
\end{equation}

With \eqref{eq:ct31} in hand, we now turn to sending $\delta\to 0$, in a similar manner to 
\cite{HaRa11}. If $ \varphi\in H^s_{0,q}(M)$ then $ \lla{G^\delta_{q,{t}}\varphi : 0<\delta<1} $ is bounded in 
$ H^s_{0,q}(M)$, so there exists $ \delta_k\rightarrow 0 $ 
and $ \tilde{u}\in H^s_{0,q}(M)$ so that $ G^{\delta_k}_{q,t}\varphi\rightarrow \tilde{u} $ weakly in $ H^s_{0,q}(M)$. 
Since the inclusion of $ H^s_{0,q}(M)$ in $ L^2_{0,q}(M) $ is compact, we have $ G^{\delta_k}_{q,t}\varphi\rightarrow \tilde{u} $ 
strongly in $ L^2_{0,q}(M) $ and $ \tilde{u}\in {^\perp\opH_{{t}}^q}(M) $. Also
\begin{equation}\label{eq:ct4}
\norm{\tilde{u}}_{H^s}^2\leq K_t\norm{\varphi}^2_{H^s}+C_{t,s}\norm{\varphi}_0^2.
\end{equation}
Also,
\[
\normm{\pbb G^\delta_{q,{t}}\varphi}^2_{{t}} +\normm{\dbars_{b,{t}} G^\delta_{q,{t}}\varphi}^2_{{t}} \leq Q^\delta_{b,{t}}(G^\delta_{q,{t}}\varphi,G^\delta_{q,{t}}\varphi)=\ip{\varphi,G^\delta_{q,{t}}\varphi}_{{t}}\leq \normm{\varphi}_{{t}}\normm{G^\delta_{q,{t}}\varphi}\leq C_t\normm{\varphi}^2_{{t}} ,
\]
and, as in the previous section, we can prove $ \pbb G^{{\delta_k}}_{q,t}\vp $ and $ \dbars_{b,{t}} G^{{\delta_k}}_{q,t}\vp $ are Cauchy sequences in $L^2_{0,q}(M)$. Since $ \pbb $ and $ \dbars_{b,{t}} $ are closed operators we will have $ \tilde{u} \in \DQ $, $ \pbb \gqtd\vp \rightarrow \pbb \tilde{u} $ and $ \dbars_{b,{t}} \gqtd\vp \rightarrow \dbars_{b,{t}} \tilde{u}  $ in $L^2_{0,q}(M)$, and
\begin{equation}\label{eq:ct5}
\normm{\pbb \tilde{u}}^2_{{t}} +\normm{\dbars_{b,{t}} \tilde{u}}^2_{{t}} \leq C_t\normm{\varphi}_{{t}}^2.
\end{equation}
Consequently if $ v\in H^{s+2}_{{0,q}}{(M)} $, then $ \lim Q^{\delta_k}_{b,t}(G^{\delta_k}_{q,t}\vp,v)=Q_{b,{t}}(\tilde{u},v) $. However, $ Q^{\delta_k}_{b,t}(G^{\delta_k}_{q,t}\varphi,v)=\ip{\varphi,v}_{{t}}= Q_{b,{t}}(G_{q,{t}}\varphi,v)$. So by uniqueness $ G_{q,{t}}\varphi =\tilde{u}$ and \eqref{eq:ct4} 
we  have
\begin{equation}\label{eq:ct41}
\norm{G_{q,{t}}\varphi}_{H^s}^2\leq K_t\normm{\varphi}_{H^s}^2+C_{t,s}\norm{\varphi}_0^2,
\end{equation}
and by \eqref{eq:ct5} 
\begin{equation}
\normm{\pbb G_{q,{t}}\varphi }^2_{{t}} +\normm{\dbars_{b,{t}} G_{q,{t}}\varphi}^2_{{t}} \leq C_t\normm{\varphi}_{{t}}^2.
\end{equation}
These two last equations prove the continuity of $ G_{q,{t}} $ on $ H^s_{0,q}(M)$ 
and as well as $ \pbb G_{q,{t}} $ and $ \dbars_{b,{t}} G_{q,{t}} $ on $L^2_{0,q}(M)$.

The remainder of the proof of Theorem \ref{thm:mainthm, Sobolev} follows from (by now) standard arguments. See, e.g., 
the proof of \cite[Theorem 1.2]{HaRa11}, and Section 6, in particular.

%
%
\section{Proof of the Theorem \ref{thm:mainthm, unweighted}}\label{sec:proof of main theorem}
Since the $L^2(M,{\normm{\cdot}_t})$ and  $L^2(M)$ are equivalent spaces, it is immediate
that $\dbarb :L^2_{0,\tilde q-1}(M)\to L^2_{0,\tilde q} (M)$ has closed range for $\tilde q = q$ or $q+1$. Moreover, by \cite[Theorem 1.1.1]{Hor65},
their adjoints $\dbarbs: L^2_{0,\tilde q}(M)\to L^2_{0,\tilde q-1}(M)$, $\tilde q = q$ or $q+1$ have closed range as well. 
Moreover,  the {dimension of the space of
harmonic $(0,q)$-forms is independent of the weight and is therefore finite (see, e.g., \cite[p.772]{RaSt08} or \cite{Koh73}).}
Standard arguments now establish the rest of Theorem \ref{thm:mainthm, unweighted}.

%
%
\section{Examples}\label{sec:examples}
In this section, we modify the main example of \cite{HaRa15} and show how the flexibility of choosing $\Upsilon$ makes it easier to verify
than the older weak $Y(q)$ condition of \cite{HaRa11}.

Let $M\subset\C^5$ be the boundary of a domain $\Om$ so that on neighborhood $U$ of the origin so that
\[
M \cap U = \{ z = (z_1,\dots, z_5)\in \C^5 : \Imm z_5 = P(z_1,z_2,z_3,z_4)\}.
\]
We set 
\[
\rho(z) = P(z_1,z_2,z_3,z_4) - \Imm z_5
\]
where the polynomial 
\[
P(z_1,z_2,z_3,z_4) = 2x_1|z_2|^2 - x_1y_1^4 + |z_3|^2 + |z_4|^2.
\]
Observe that
\[
\dbar \rho = \Big(|z_2|^2 - \frac 12 y_1^4 - 2ix_1y_1^3\Big)\, d\z_1 + 2x_1z_2\, d\z_2 + \z_3\, d\z_3 + \z_4\, d\z_4 - \frac i2\, d\z_5
\]
and
\[
\p\dbar \rho = -3x_1y_1^2\, dz_1\wedge d\z_1 + z_2\, dz_1\wedge d\z_2 + \z_2\, dz_2\wedge d\z_1 + 2x_1\, d z_2\wedge d\z_2 + dz_3\wedge d\z_3 + dz_4 \wedge d\z_4.
\]
We choose a basis for $T^{1,0}(M\cap U)$ by setting
\[
L_j = \frac{\p}{\p z_j} + 2i \frac{\p P}{\p z_j} \frac{\p}{\p z_5},\quad 1 \leq j \leq 4.
\]
In this basis, we can represent the Levi form by the $4\times 4$ matrix 
\begin{equation}\label{eqn:Levi matrix}
(c_{j\bar k}) = \opL_{\rho_1}(i\LL_k \wedge L_j) = i\p\dbar \rho\Big(i \frac{\p}{\p\z_k}\wedge \frac{\p}{\p z_j}\Big)
= \begin{pmatrix} -3x_1y_1^2 & z_2 & 0 & 0 \\ \z_2 & 2x_1 & 0 & 0 \\ 0 & 0 & 1 & 0 \\ 0 & 0 & 0 & 1 \end{pmatrix} = (\rho_{j\bar k})
\end{equation}
Since $(c_{jk})$ has three positive eigenvalues whenever either $z_2\neq 0$ or both $x\neq 0$ and $y\neq 0$. Hence $Z(2)$ is satisfied on a dense subset of
$M\cap U$.

%
\begin{prop}\label{lem: weak Y(2) near 0}
The CR manifold $M$ satisfies weak $Y(2)$ on $M\cap U$. 
\end{prop}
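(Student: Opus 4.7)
The plan is to exhibit an explicit $\Upsilon$ witnessing weak $Z(2)$. Since $n=5$ and $q=2$, one has $n-1-q=2$, so weak $Y(2)$ reduces to a single instance of weak $Z(2)$; the same $\Upsilon$ serves both requirements.

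First, I would analyze the Levi form near the origin in the frame $L_1,\dots,L_4$ of the excerpt. Because $\partial P/\partial z_j(0) = 0$, this frame is orthonormal at the origin and within $O(|z|^2)$ of orthonormal on $M\cap U$. The Levi matrix (\ref{eqn:Levi matrix}) is block diagonal: the lower $2\times 2$ block is $I_2$, contributing eigenvalues $1,1$; the upper block
\[
A = \begin{pmatrix} -3x_1 y_1^2 & z_2 \\ \bar z_2 & 2x_1\end{pmatrix}
\]
has $\det A = -6x_1^2 y_1^2 - |z_2|^2 \le 0$, so one nonpositive eigenvalue $\lambda_-$ and one nonnegative eigenvalue $\lambda_+$ with $\lambda_- + \lambda_+ = x_1(2-3y_1^2)$. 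Shrinking $U$ if necessary so that $|\lambda_\pm| < 1$, the two smallest Levi eigenvalues are $\mu_1 = \lambda_-$ and $\mu_2 = \lambda_+$, hence $\mu_1+\mu_2 = x_1(2-3y_1^2) + O(|z|^4)$. Since this sum changes sign on $U$, classical $Y(2)$ fails, so a nontrivial $\Upsilon$ is needed.

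Next, I would take $\Upsilon$ to be the real $(1,1)$-vector corresponding to the Hermitian matrix $U = a\,\mathrm{diag}(0,0,1,1)$ for a fixed $a\in(0,1)$; concretely, $\Upsilon = i a(L_3\wedge \bar L_3 + L_4\wedge \bar L_4)$ (with the sign fixed by the conventions of Section \ref{sec:defs}). Condition (A) reduces to $0 \le \theta^* U \theta \le |\theta|^2$, which holds since the spectrum of $U$ is $\{0,0,a,a\}\subset [0,1]$. Condition (C) reduces to $\omega(\Upsilon) = 2a \ne 2$, which holds since $a<1$. For condition (B), Cartan's formula (\ref{eq:cartan}) and the fact that $U$ is supported on the $(L_3,L_4)$-block of the Levi matrix (where the eigenvalues are $+1$) give $-i\langle d\gamma,\Upsilon\rangle = a(c_{3\bar 3} + c_{4\bar 4}) = 2a$. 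Thus (B) becomes
\[
\mu_1 + \mu_2 + 2a = x_1(2-3y_1^2) + 2a + O(|z|^4) \ge 0,
\]
which holds on any neighborhood with $\sup_U|x_1(2-3y_1^2)| + O(|z|^4) \le 2a$; this is achieved by shrinking $U$ while keeping $a$ fixed.

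The only real obstacle is bookkeeping: one must pin down the sign conventions for the pairings $\omega(\Upsilon)$ and $\langle d\gamma,\Upsilon\rangle$ used in Definition \ref{defn:weak Z(q)}, and track the $O(|z|^2)$ corrections to the Levi eigenvalues that arise because $L_1,\dots,L_4$ is not exactly orthonormal away from the origin. Both are straightforward: the signs are forced by the definition of $\omega$ as the $(1,1)$-form of the induced metric, and the perturbative corrections are dominated by the slack $2a - \sup_U|x_1(2-3y_1^2)|$ after shrinking $U$.
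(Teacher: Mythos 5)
Your eigenvalue analysis of the Levi matrix is fine, and the observation that weak $Y(2)$ here reduces to a single instance of weak $Z(2)$ is consistent with Definition \ref{defn:weak Z(q)}; but the choice of $\Upsilon$ is wrong, and the ``bookkeeping'' about signs that you defer is exactly where the argument breaks. The way conditions (A) and (B) are used in Section \ref{sec: basic estimate} pins the conventions: (A) says that the Hermitian matrix $(b^{\bar kj})$ attached to $\Upsilon$ (the one defining $\|\bna_\Upsilon f\|^2=\sum_{j,k}(b^{\bar kj}\bar L_k f,\bar L_j f)_\phi$) satisfies $0\le B\le I$, and the identity leading to \eqref{eq:maininequality1} identifies $i\langle d\gamma,\Upsilon\rangle=\sum_{j,k}b^{\bar kj}c_{jk}$ for that \emph{same} positive semidefinite $B$. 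Hence (B) reads $\mu_1+\mu_2\ \ge\ \sum_{j,k}b^{\bar kj}c_{jk}$, i.e.\ the $\Upsilon$-term is \emph{subtracted}, not added. With your $B=a\,\mathrm{diag}(0,0,1,1)$, $a>0$ fixed, one gets $\sum b^{\bar kj}c_{jk}=a(c_{3\bar 3}+c_{4\bar 4})=2a$, while $\mu_1+\mu_2=x_1(2-3y_1^2)\to 0$ at the origin (and is negative for $x_1<0$); so (B) fails at and near $0$ no matter how much you shrink $U$. You cannot repair this by flipping the sign of $\Upsilon$: with $\Upsilon=+ia(L_3\wedge\bar L_3+L_4\wedge\bar L_4)$ one computes $(i\theta\wedge\bar\theta)(\Upsilon)=-a(|\theta_3|^2+|\theta_4|^2)\le 0$, so (A) fails, and with the opposite sign (A) holds but (B) becomes $\mu_1+\mu_2-2a\ge 0$, which fails. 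The conditions are linked through one pairing, and any $\Upsilon$ whose matrix has weight bounded below on the strictly positive Levi directions is ruled out near a point where $\mu_1+\mu_2\le 0$. Intuitively, $\Upsilon$ must be adapted to the degenerate/negative directions of the Levi form, not the positive ones; your reading would make weak $Z(q)$ nearly vacuous.

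The paper's proof simply invokes the construction of \cite[p.1747--1748]{HaRa15}, which is of the opposite nature: one takes $\Upsilon$ supported on the $(L_1,L_2)$-block, e.g.\ the matrix $B=\mathrm{diag}\big(0,\,1-\tfrac32 y_1^2,\,0,\,0\big)$. Then $\sum_{j,k}b^{\bar kj}c_{jk}=(1-\tfrac32 y_1^2)\,2x_1=2x_1-3x_1y_1^2$, which equals $\mu_1+\mu_2$ exactly on a neighborhood where the two smallest Levi eigenvalues come from the upper $2\times2$ block (your own computation of $\mathrm{tr}$ and $\det$ of that block gives this after shrinking $U$), so (B) holds with equality; (A) holds since $0\le 1-\tfrac32 y_1^2\le 1$ for $|y_1|$ small; and $\omega(\Upsilon)=1-\tfrac32 y_1^2\ne 2$ gives (C). Since $n-q-1=2$, this single $\Upsilon$ settles weak $Y(2)$. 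So the correct proof needs a point-dependent $\Upsilon$ tracking the indefinite block, and your fixed multiple of the identity on the positive block does not work.
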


\begin{proof}The construction of $\Upsilon$ in the proof of \cite[p.1747-1748]{HaRa15} works here as well. 
Moreover, since $\mu_3>0$, it is immediate that we can use the same form $\Upsilon$ for both 
the weak $Z(2)=Z(5-2-1)$ and weak $Z(3)$ cases. 
\end{proof}

Showing that the older weak $Z(2)$ condition fails is quite difficult -- showing that the condition fails in \emph{all} choices of coordinates
amounts to solving a nonlinear problem. Specifically, we know that the signature of the Levi form does not change, but the eigenvalues
certainly can. Computing eigenvalues after coordinate changes or changes of metric is nonlinear and is already quite difficult in the 
$4\times 4$ case. We also point out that none of the weak $Y(q)$ conditions are invariant under the metric as an example from
\cite{HaRa15} shows (no condition that depends on sums of eigenvalues is likely to be invariant under changes of metric).

\bibliographystyle{alpha}
\bibliography{mybib}
\end{document}